\newtheorem{theorem}{Theorem}[section]
\newtheorem{lemma}[theorem]{Lemma}
\newtheorem{proposition}[theorem]{Proposition}
\newtheorem{corollary}[theorem]{Corollary}
\newtheorem{definition}[theorem]{Definition}
\newtheorem{example}[theorem]{Example}
\newtheorem{remark}[theorem]{Remark}
\newcommand{\R}{{\boldsymbol{\mathbb R}}}
\newcommand{\N}{{\boldsymbol{\mathbb N}}}
\newcommand{\Z}{{\boldsymbol{\mathbb Z}}}
\newcommand{\vecf}[1]{\mathfrak{X}(#1)}
\newcommand{\vecfp}[1]{\mathfrak{X}^\perp(#1)}
\newcommand{\grad}{\operatorname{grad}}
\newcommand{\divg}{\operatorname{div}}
\newcommand{\inner}[2]{\langle #1, #2\rangle}
\newcommand{\im}{\operatorname{Im}}
\newcommand{\trace}{\operatorname{tr}}
\newcommand{\codim}{\operatorname{codim}}
\title{On the genericity of singularities in spacetimes with weakly trapped submanifolds}
\author{Ivan P. Costa e Silva}
\address{
	Department of Mathematics, Universidade Federal de Santa Catarina, 88.040-900\\
	Florianópolis-SC, Brazil}
\email{pontual.ivan@ufsc.br}
\author{Victor L. Espinoza}
\address{
	Department of Mathematics, Universidade Federal de Santa Catarina, 88.040-900\\
	Florianópolis-SC, Brazil}
\email{victor.luis.espinoza@gmail.com}
\keywords{Global Lorentzian geometry; singularity theorems; generic properties}
\subjclass[2020]{53C50, 53C80}
\begin{document}
	
	\begin{abstract}
		We investigate suitable, physically motivated conditions on spacetimes containing certain submanifolds - the so-called \textit{weakly trapped submanifolds} - that ensure, in a set of neighboring metrics with respect to a convenient topology, that the phenomenon of nonspacelike geodesic incompleteness (i.e., the existence of singularities) is \textit{generic} in a precise technical sense. We obtain two sets of results. First, we use strong Whitney topologies on spaces of Lorentzian metrics on a manifold $M$, in the spirit of D.E Lerner's ``The space of Lorentz metrics'' [Commun. Math. Phys. 32, pp. 19–38 (1973)], and obtain that while the set of singular Lorentzian metrics around a fiducial one possessing a weakly trapped submanifold $\Sigma$ is not really generic, it is nevertheless \textit{prevalent} in a sense we define, and thus still quite ``large'' in this sense. We prove versions of that result both for the case when $\Sigma$ has codimension 2, and for the case of higher codimension. 
		The second set of results explore a similar question, but now for initial data sets containing MOTS. For this case, we use certain well-known infinite dimensional, Hilbert manifold structures on the space of initial data and use abstract functional-analytic methods based on the work of Biliotti, Javaloyes,
and Piccione ``Genericity of nondegenerate critical points and Morse geodesic functionals'' [Indiana Univ. Math. J. 58, pp. 1797–1830 (2009)] to obtain a true genericity of null geodesic incompleteness around suitable initial data sets containing MOTS. 
	\end{abstract}
	
	\maketitle
	
	\section{Introduction}
	Let $(M^n,g)$ be \textit{spacetime}, i.e., a connected time-oriented Lorentz manifold of dimension $n\geq2$. An embedded spacelike submanifold (without boundary) $\Sigma\subset M$ is said to be \textit{(future) trapped} (in $(M,g)$) if its mean curvature vector field $H^\Sigma$ is past-directed timelike everywhere on $\Sigma$. (Such submanifolds are also known as a \textit{future-converging}, comp. \cite[Def. 14.57]{OneillSRG}, and the time-dual definitions of \textit{past trapped}/\textit{past-converging} submanifolds are understood throughout even if they will be hardly mentioned.). In particular, compact, codimension-two trapped submanifolds (often called \textit{closed trapped surfaces} in the physics literature) were introduced by Roger Penrose in 1965 in the context of his celebrated first singularity theorem proven in \cite{PenroseTeo}. Penrose's insightful use of trapped submanifolds to model gravitational collapse set up the template for all the other singularity theorems in the literature (those described 
	in \cite{BeemGLG,HawkingTeo,HawkPen} being only the most famous ones). The importance of theorems of this kind in geometric theories of gravity - those in which gravitational fields are described via the geometry of some spacetime irrespective of detailed field equations - cannot be overstated, and indeed the 2020 Nobel prize in physics was shared by Penrose in a direct recognition of this fact. 
	
	It is well known that all singularity theorems establish the existence of incomplete, inextendible causal geodesics in spacetimes under certain physically motivated geometric assumptions, the presence of compact trapped submanifolds being a primary one. Now, their role in these theorems is not as independent physical structures, but rather to provide an abstract, differential-geometric description of the onset of gravitation collapse. As such, trapped submanifolds are widely believed to appear in any physically realistic model where ``large enough concentrations of mass gather in small enough regions''. However, due to their abstract character, a precise mathematical formulation of the informal statement between quotations marks and an actual proof that compact trapped surfaces should indeed develop under suitable conditions are notoriously difficult. Even in the simplest and best-known geometric theory of gravity, general relativity, these proofs require very specific, highly technical hypotheses in order to control the analytic and geometric features of a long-term development of initial data for the Einstein field equations, see e.g., \textcite{christou,KleinRod1,KleinRod2}. (Geometric theories other than general relativity, with different field equations, would have to be analyzed on a case-by-case basis.) Therefore, the existence of suitable trapped submanifolds remains in most cases an added, extra assumption. 
	
	Yet, since the appearance of incomplete causal geodesics are the primary concern in singularity theorems, one might try to find a way to deduce their presence indirectly, without imposing the existence of a trapped submanifold at the outset. For instance, insofar as the interior of black holes are expected to be prime places to find singularities in physics, one might hope to link their presence to the detection of the black horizons themselves. (Indeed, insofar as one cannot ``peek'' behind event horizons, this is arguably the \textit{only} way one can actually do it in physics!). Mathematically, however, black hole horizons have a global definition that seems ill-suited to make them good substitutes for ``quasi-local'' objects like closed trapped surfaces (see, e.g., \cite[pp. 315-323]{HE} and \cite[pp. 27-74]{HBBBH}) in singularity theorems. But an early, general quasi-local description of black hole horizons have been given via \textit{marginally outer trapped surfaces (MOTS)}, which are closely related to (and sometimes conflated with) the so-called \textit{apparent horizons} \cite{HE,WaldGR}. Spatial slices of totally geodesic null hypersurfaces - such as \textit{Killing horizons}, a class that includes the event horizons of stationary black holes in general relativity - give key examples of MOTS. Although in general, dynamic, non-stationary black holes MOTS do not coincide with slices of black hole horizons, they often appear together. Indeed, they are much used in numerical relativity in ``horizon-finder`` algorithms even in a highly dynamic regime far away from stationarity \cite{Lin_2007,thornburg2007event}.
	
	We shall say that an embedded spacelike submanifold (without boundary) $\Sigma\subset M$ is \textit{weakly (future) trapped} (in $(M,g)$) if at each $p \in \Sigma$ its mean curvature vector $H^\Sigma_p$ is either zero or past-directed \textit{causal}. Any trapped submanifold is thus weakly trapped, but so are MOTS. The question now is: can we prove singularity theorems in the presence of weakly trapped submanifolds? Put in this naive way, the answer is of course generally negative: It is easy (via, say, simple isometric identifications of Minkowski spacetime) to find geodesically complete Ricci-flat spacetimes containing weakly trapped submanifolds of any codimension $\geq 1$. However, one intuitively expects that in the presence of weakly trapped submanifolds, ``arbitrarily small perturbations'' of the spacetime metric might produce trapped submanifolds and hence singularities. But it might still occur that such perturbations would have to be very ''special'' to produce trapped manifolds. Since on physical grounds any field can only have its values ascertained within a certain empirical precision, one may change the question and ask \textit{whether the occurrence of singularities is a ``generic'' feature in a suitable set of metrics ``near'' some metric containing MOTS or other weakly trapped submanifolds}. If so, we ``almost always'' should expect singularities to occur, given the large ensemble of black hole horizons believed to occur throughout the visible universe. We only need to make these terms between quotation marks mathematically more precise.
	
	A first hint as to how this can be done is given by a theorem by \textcite[Prop. 1.1]{chrusciel_outer_2014}, in which the \textit{density} of outer trapped surfaces near MOTS on a suitable space of initial data sets in general relativity is established, and since the presence of outer trapped surfaces are enough to establish a singularity theorem in that context, this can be regarded as definite evidence of the genericity mentioned above. In a different vein, there are certain ``generic singularity theorems'' such as those proven by \textcite{Silva_MOTS_2012} which rely on a variant of the so-called \textit{generic condition} on the curvature tensor (cf. \textcite[Section 2.5]{BeemGLG}). This is an assumption already used in the Penrose-Hawking classic singularity theorem (\textcite{HawkPen}), and although the generic condition at first seems to be a somewhat contrived curvature constraint, its ``truly generic'' character has been analyzed at tangent spaces by \textcite{beem_generic_1993}, as well as globally in the master's thesis of \textcite{Larsson_diss}. These results suggest a natural, broader conceptual perspective on the singularity theorems in \textcite{chrusciel_outer_2014,Silva_MOTS_2012}: one can try and obtain density/genericity of a {\it whole class} of singular spacetimes near (with respect to a suitable topology on the space of Lorentzian metrics) a spacetime containing a closed weakly trapped submanifold such as a MOTS.
	
	In order to turn these general musings into definite theorems, we are inspired by two separate approaches. One is the seminal work by \textcite{lerner_space_1973}, who first introduced a natural framework to discuss such stability and genericity issues in mathematically rigorous terms and in full nonlinear generality, namely the \textit{strong Whitney $C^s$ topologies} on the space of Lorentzian metrics on a given manifold. Lerner presented a cogent case for the special suitability of these topologies in mathematical relativity, analyzing and establishing the stability of a number of causal and curvature properties used in the original singularity theorems (see also \textcite[Ch. 7]{BeemGLG} for a detailed discussion and further results and references on the subject). 
	
	Although they are geometrically flexible and conceptually satisfying, Lerner's methods apply to spacetimes as a whole. As opposed to that, an \textit{initial data set} analysis is arguably of greater \textit{physical} importance. In fact, the latter approach seems to be the only viable one in numerical relativity, for example \cite{Cook_2000}, which in turn is the only practical means of producing theoretical templates to be confronted with astrophysical observations. In this context, one wishes to investigate how often the existence of singularities - i.e., incomplete causal geodesics inextendible in the maximal Cauchy development of a given initial data - can be inferred. The natural means here is to use a suitable \textit{Banach manifold structure} of the class of initial data sets on a given manifold \cite{  alias2011manifold, bartnik2005phase, chrusciel2003mapping} and analytic methods quite different from those of Lerner's. The work of \textcite{chrusciel_outer_2014} (see in particular Thm. 1.2 therein) cited above provides a particularly relevant example of this approach. 
	
	

	The goal of this paper is to combine the spirit of both \textcite{lerner_space_1973} and \textcite{chrusciel_outer_2014} - though with different technical details - to obtain novel conditions under which causal geodesic incompleteness of spacetimes can be regarded as a generic phenomenon. 
	
	Reflecting these two motivating philosophies, we divide our work into two main parts. Our first set of results use the strong Whitney topologies on the set of Lorentzian metrics on a fixed manifold, in order to analyze how frequently causal geodesic incompleteness occurs in spacetimes containing \textit{weakly trapped} submanifolds. (Recall, these comprise a class that includes MOTS.) Using a concrete example, we show, in particular, that if we consider only those spacetimes with certain physically motivated but strictly enforced geometric conditions, then singularities are \textit{not} generic with respect to Whitney topologies. But upon allowing slight perturbations of these conditions - which seems to be a physically sensible thing to do anyway  - then they are shown to be still \textit{prevalent} in a suitable technical sense we explain. 
	
	On the other hand, in the second part we do obtain a strict genericity result directly influenced by the arguments in \textcite{chrusciel_outer_2014}. We emphasize that whereas these authors obtain \textit{density}, our goal in this section is to obtain \textit{genericity} of incompleteness near MOTS. (To see the difference, consider the standard example of the set of irrational real numbers versus rational ones in $\mathbb{R}$: both subsets are dense therein, but only the former is generic; thus ``almost every number is irrational''.) We also use abstract infinite dimensional (Banach/Hilbert) manifold structures on the set of initial data (under some reasonable restrictions, cf. \textcite{bartnik2005phase} and \textcite{alias2011manifold}), and apply the most basic technical tool to obtain genericity results in this contexts, namely the Sard-Smale theorem (\textcite{smale1965infinite}). 
	
	We view these two sets of results as complementary perspectives on the same problem. As one might expect, there are advantages and disadvantages in each approach. Among the perks of the Whitney topology approach we might count: (i) the proofs are less technical, and rely on already fairly well-known topological techniques, (ii) our curvature assumption in the codimension 2 case (the so-called strong energy condition) is strictly weaker than the dominant energy condition often imposed on initial data sets, (iii) our results require only the presence of weakly trapped submanifolds, a class far larger than just MOTS, (iv) we include a result for higher codimension for little extra cost, and last but not least (v) the causality requirements on spacetime - basically stable causality - are much weaker than those in the second part.  The latter point is relevant especially in physical applications, because unless strong cosmic censorship applies, any incomplete causal geodesic one predicts in the - necessarily globally hyperbolic - maximal Cauchy development of a given initial data set might still be complete on an isometric extension of lower causality. This can be neatly illustrated in the case of initial data induced on a suitable smooth \textit{partial} Cauchy hypersurfaces in anti-de Sitter spacetime, which is geodesically complete, stably causal but not globally hyperbolic. The causal geodesics are all complete in the whole spacetime, but all incomplete in the Cauchy development of the partial Cauchy hypersurface viewed as a spacetime on its own right. 
	
	The initial data approach, on the other hand, is technically more involved, but it is much more convenient to treat the specific case of MOTS, which are after all natural models for (slices of stationary) black hole horizons. Initial data sets have broader applicability in PDE analysis of the Einstein fields equations of general relativity, and also in numerical methods \cite{Baumgarte_Shapiro_2010,Cook_2000}. From a physical perspective, again, one might argue that one can hardly expect to glean actual, direct information of spacetimes as a whole. Rather, all one can expect is to make predictions from current data, and initial data sets are just the model for such a situation. While we can only guarantee genericity of causal incompleteness for the Cauchy development of initial data sets, and in particular only for globally hyperbolic spacetimes, these already cover a vast amount of interesting and relevant cases.

	The rest of the paper is organized as follows. In section \ref{prel} we recall some basic notions and fix notation and conventions, and in subsection \ref{subsecbasic} we briefly review some of the main results in \cite{lerner_space_1973} we shall use. Section \ref{sectmain1} discusses the codimension 2 case, and the respective theorem \ref{mainthm1} is proven in subsection \ref{sect:sing1}. Also, we are able to answer a problem raised by Lerner (\cref{rmk1.1}) that as far we know was left unanswered, and expand on such point (\cref{prop:conterexprops}). Section \ref{sectmain2} discusses analogous results in larger codimension of the weakly trapped submanifold, while theorem \ref{mainthm2} itself is proven in subsection \ref{sect:sing2}. Finally \cref{sectmain3}  is devoted to the initial data/MOTS case. We start with an abstract, Banach manifold genericity method especially adapted from \cite{piccione1}. This abstract approach has the enormous advantage of flexibility: we can choose among a number of variants of Banach/Hilbert manifold structures on initial data sets and on the set of embeddings extant in the literature, subject only to relatively mild technical restrictions. It also bypasses most of the tremendously involved technicalities arising from looking too closely uponthese structures. We then obtain definite genericity results in (\cref{teo:mainMOTS1} and \cref{coro:motsgencoro}) as a relatively straightforward consequence of the abstract machinery.

	
	
	\section{Preliminary Notions}\label{prel}
	
	In this initial section we briefly introduce  terminology and notation that will recur throughout this paper, and also review some topological and geometrical results that will be referenced in our main discussion.

	We shall assume the reader is familiar with the basic concepts of Lorentzian geometry, and refer to the textbooks by \cite{BeemGLG,OneillSRG} for details. 
	
	Throughout this work, $M$ denotes a fixed connected, non-compact smooth (i.e. $C^{\infty}$) real manifold without boundary of dimension $m+k\geq 3.$ 
	\subsection{Genericity and Prevalence} 
	
	f
	As discussed in the Introduction, the goal of this paper is to establish, under suitable conditions, that the appearance of singularities in a spacetime is a \textit{generic} phenomenon. This is made precise via the following standard definition.
	
	\begin{definition}\label{def:topgeneric}
		Let $X$ be a topological space. A subset  $A\subset X$ is said to be \emph{residual} (in $X$) if it contains a countable {intersection} of open dense sets. A property $P$ of elements of $X$ is said to be \emph{generic} if the subset of all those elements of $X$ that possess the property $P$ is residual.
	\end{definition}
	
	The complementary notion of a residual set is called a \emph{meager set}, that is, a set $S\subset X$ contained in a countable union of nowhere dense subsets (sets such that the interior of their closure is empty) of the topological space $X$. If every residual set is dense, we say that the topological space $X$ is a \emph{Baire space}. The well-known Baire category theorem then states that completely metrizable spaces or locally compact Hausdorff spaces are Baire spaces (\textcite{leeTop}, Thm. 4.68). 
	
	While genericity of course implies density on Baire spaces, the converse is certainly false: the set of rational numbers is dense in the real line, but as a countable union of singleton sets it is also a meager set. 
	
	The idea of searching for generic properties is that its complementary meager set is \emph{topologically negligible}: a nowhere dense set is so ``topologically small'' that the interior of its closure is empty, and a meager set is contained in a countable union of such ``small'' sets, which is also viewed as small. While such notion of smallness seems hardly as good as a the measure-theoretical idea of a null set applicable on $\mathbb{R}^n$, and more generally on finite-dimensional manifolds, it is widely adopted in the context we are interested in here. Part of the reason is because there are no infinite-dimensional analogues of the Lebesgue measure. (See  \textcite{Hunt1992PrevalenceAT} for a discussion on this topic and for an alternative notion of smallness in infinite dimensional linear spaces. While closer to measure theory, it is unclear how to adapt their definitions and results to infinite dimensional manifolds, and thus they will not be pursued here. See also \textcite{oxtoby_measure_1980} for a survey of the advantages and shortcomings of Baire category ``smallness'' as opposed to a measure-theoretic one). In any case, the notions of residual sets and genericity, as well as its dual notion of topological smallness are very standard throughout the literature, and have proven to be useful in many geometrical situations. 
	
	We introduce here the following alternative notion of topological smallness that will be relevant for our first set of results.
	
	\begin{definition}\label{def:prevalence}
		Let $X$ be a topological space and let $C\subseteq X$ be a closed set. We shall say that a set $A \subseteq C$ is \emph{prevalent}\footnote{This nomenclature is inspired by the work of \textcite{Hunt1992PrevalenceAT}, but it is unrelated to the measure-theoretic notion of prevalence introduced by these authors.}  in C if $C\setminus A$ is a meager set in $X.$
	\end{definition}
	
	It is clear from this definition that if $A\subset A'\subset C$ and $A$ is prevalent in the closed set $C\subset X$, then $A'$ is also prevalent in $C$. In particular, the (topological) closure in $X$ (or equivalently in $C$) of a prevalent set in $C$ is also prevalent in $C$. Arbitrary unions \textit{and} countable intersections of prevalent subsets of $C$ are also prevalent in $C$. Finally, it should be obvious that the notion of prevalence is nontrivial only when $C$ has itself nonempty interior in $X$, something which will always occur in all our intended applications, and thus will be implicitly assumed from now on. 
	
	The following elementary lemma will be of use later on. 
	
	\begin{lemma}\label{topologicallemma}
		Let $C$ be a closed subset of a topological space $X,$ and let $U \subseteq C$ be an open subset of $X$ such that $int(C) \subseteq \overline{U}.$ Then $C \setminus U$ is nowhere dense in $X$ (and in particular, $U$ is prevalent in $C$).  
	\end{lemma}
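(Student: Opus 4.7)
The plan is to verify the two assertions directly from the definitions. First I would observe that since $U$ is open in $X$ and $C$ is closed in $X$, the set $C\setminus U$ is closed in $X$, so it coincides with its closure. Hence to show it is nowhere dense in $X$, it suffices to prove that its interior (in $X$) is empty.

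Next, I would argue by contradiction: assume there exists a nonempty open set $V\subseteq C\setminus U$. Since $V\subseteq C$ and $V$ is open in $X$, we have $V\subseteq \mathrm{int}(C)$, and by the hypothesis $\mathrm{int}(C)\subseteq \overline{U}$ this gives $V\subseteq \overline{U}$. On the other hand, $V\cap U=\emptyset$ because $V\subseteq C\setminus U$, and since $V$ is open this actually forces $V\cap\overline{U}=\emptyset$ (any point of $\overline{U}$ has every neighborhood meeting $U$). Combining $V\subseteq\overline{U}$ and $V\cap\overline{U}=\emptyset$ yields $V=\emptyset$, the desired contradiction. Therefore $C\setminus U$ has empty interior, and being closed, is nowhere dense in $X$.

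Finally, any nowhere dense subset of $X$ is in particular meager in $X$, so by \cref{def:prevalence} the set $U$ is prevalent in $C$. The main (indeed only) subtlety I expect is the observation that openness of $V$ upgrades disjointness from $U$ to disjointness from $\overline{U}$; everything else is bookkeeping with the definitions of closure, interior and meagerness.
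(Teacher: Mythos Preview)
Your proof is correct and follows essentially the same approach as the paper. The paper argues directly via the inclusion $\mathrm{int}(C\setminus U)\subseteq \mathrm{int}(C)\setminus\overline{U}=\varnothing$, while you unfold this same reasoning as a contradiction; the key observation in both cases is precisely the one you identified, that an open set disjoint from $U$ is already disjoint from $\overline{U}$.
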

	\begin{proof}
		Since $C \setminus  U$ is closed in $X$,  $int\left({\overline{C \setminus U}}\right) = int(C \setminus U).$ Now, $int(C \setminus U) \subseteq int(C) \setminus \overline{U} = \varnothing$ by assumption, implying that $int\left(\overline {C \setminus{U}}\right) = \varnothing,$ and therefore $C \setminus  U$ is nowhere dense in $X.$ 
	\end{proof}
	
	
	As an already mentioned simple concrete example, if we take $C=\mathbb{R}$ with its standard topology, then the set of irrational numbers is prevalent in the real line, whereas the set of rationals is not. More generally, it is evident that when $C=X$ then every residual set is prevalent in $C$. This can be extended to proper closed subsets of the topological space $X$ as follows. 
	
	\begin{proposition}\label{prop:resprev}
		Let $C\subseteq X$ be a closed subset. Then every residual set of $C$ (with respect to the subspace topology on $C$) is prevalent in $C$. 
	\end{proposition}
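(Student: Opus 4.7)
The plan is to reduce the statement to a single observation about nowhere dense sets when the ambient closed subspace $C$ is used. Unpacking the two definitions, a set $A\subseteq C$ is residual in $C$ precisely when $C\setminus A$ is meager in $C$, i.e.\ contained in a countable union $\bigcup_{n\in\N} N_n$ of subsets $N_n\subseteq C$ that are nowhere dense with respect to the subspace topology on $C$. On the other hand, $A$ is prevalent in $C$ exactly when $C\setminus A$ is meager in $X$. So the whole proposition will follow once I prove the following key claim: if $N\subseteq C$ is nowhere dense in $C$, then $N$ is nowhere dense in $X$.

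To prove the claim, I would first exploit the closedness of $C$ in $X$ to identify closures. Since $N\subseteq C$ and $C$ is closed in $X$, the closure of $N$ in $X$ satisfies $\overline{N}^X\subseteq\overline{C}^X=C$, and therefore $\overline{N}^C=\overline{N}^X\cap C=\overline{N}^X$. So it only remains to check that the interior of $\overline{N}^X$ in $X$ is empty, given that its interior in $C$ is empty. For this, take any set $U$ open in $X$ with $U\subseteq\overline{N}^X$. Then $U\subseteq C$, so $U=U\cap C$ is also open in $C$; by the hypothesis $U\subseteq \mathrm{int}_C(\overline{N}^C)=\varnothing$. Thus $\mathrm{int}_X(\overline{N}^X)=\varnothing$, establishing the claim.

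Putting the pieces together: given a residual $A\subseteq C$, write $C\setminus A\subseteq\bigcup_n N_n$ with each $N_n$ nowhere dense in $C$; by the claim each $N_n$ is nowhere dense in $X$ as well, so the same countable union witnesses that $C\setminus A$ is meager in $X$, i.e.\ $A$ is prevalent in $C$. The only genuinely delicate point is the interior/closure juggling between $C$ and $X$, and it is handled cleanly by the fact that $C$ is closed — without this hypothesis, an $X$-open subset of $C$ need not itself be open in $C$, and the argument breaks.
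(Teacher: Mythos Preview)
Your proof is correct and follows essentially the same route as the paper's: both show that the closed-with-empty-$C$-interior sets witnessing meagerness in $C$ remain nowhere dense in $X$, via the observation that an $X$-open subset of such a set is $C$-open and hence empty. One small correction to your closing commentary: an $X$-open subset of $C$ is \emph{always} open in $C$ (that is just the subspace topology); what closedness of $C$ actually provides is $\overline{N}^X\subseteq C$, so that any $X$-open $U\subseteq\overline{N}^X$ is forced to lie inside $C$ in the first place.
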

	\begin{proof}
		Let $A\subseteq C$ be residual in $C.$ Then $A$ contains the intersection $\bigcap_n \mathcal{O}_n$ of a countable collection $\{\mathcal{O}_n\}_{n\in \mathbb{N}}$ of open dense subsets of $C.$ We have $C \setminus A \subseteq \bigcup_{n} C \setminus \mathcal{O}_n,$ with each $ C \setminus \mathcal{O}_n$ being closed in $C,$ therefore closed in $X.$ Now, $int_X (C \setminus \mathcal O_n)$ is an open subset of $C$ contained in $C \setminus \mathcal O_n$ that does not intersect $\mathcal O_n$. Since $\mathcal O_n$ is dense in $C,$ we conclude that $int_X (C \setminus \mathcal O_n)$ must be empty, and $C\setminus A$ is thus a meager set of $X.$ 
	\end{proof}
	
	As the next example shows, the converse to \cref{prop:resprev} is false even if both $X$ and $C$ are Baire spaces.
	
	\begin{example}\label{ex:referee}
		Let $X = \R^2$ with the usual topology, and $C = \{y\leq 0\} \cup \{x = 0, y\geq 0\}.$ Let $A = \{y \leq 0\}.$ Since 
		$C \setminus A = \{x = 0, y > 0\},$ and the latter line is nowhere dense in $\mathbb{R}^2$, we conclude that $A$ is a closed prevalent subset in $C.$

		However, $C \setminus A = \{x = 0, y > 0\}$ has nonempty interior with respect to the subspace topology (indeed, this whole half line is itself open in $C$). But then $C\setminus A$ cannot be contained in a meager set in $C$, since $C$ is a Baire space and any meager set therein must have empty interior in $C$. We conclude that $A$ cannot be residual in $C$ with respect to the subspace topology.
	\end{example}
	
	In spite of not being in general residual in the closed set $C\subset X$, a prevalent subset $A\subset C$ is still ``large'' in $C$ \textit{relative to the broader space $X$}, and our position here is that this notion of ``largeness'' is still relevant in the context explored in this paper. 
	
	\subsection{ Lorentzian metrics and Whitney topologies}
	
	Let $Sym^2(M)$ be the vector bundle of $(0,2)$-type symmetric tensors on $M$. We denote by $L\subset Sym^2(M)$ the smooth subbundle whose sections are {\it Lorentzian metric tensor fields} (or {\it Lorentzian metrics} for short) on $M$, which is open in $Sym^2(M)$ when the latter is endowed with its standard manifold topology. In other words, a section of $L$ is a map $g$ which associates with each $p\in M$ a symmetric nondegenerate bilinear form $g_p: T_pM\times T_pM \rightarrow \mathbb{R}$ of index 1. Let $\Gamma^r(L)$ be the set of $r$-differentiable sections with $0 \leq r\leq \infty$, that is, the set of Lorentzian metrics $g$ on $M$ whose components $g_{ij}$ in local coordinates have continuous partial derivatives up to order $r$ for $r\geq 1$, or that are simply continuous when $r=0$\footnote{We avoid to refer to these as ``(of class) $C^r$'' to avoid notational confusion with the Whitney topologies discussed ahead.}. By a \textit{metric} we always mean here a Lorentzian metric unless explicitly stated otherwise, but we specify its degree of differentiability as needed.

	Let us briefly describe the relevant topologies we shall adopt on  $\Gamma^r(L)$. Again, since this is fairly standard material we shall only do so in outline here. In general, given any other smooth manifold $N$, we  denote by $C^r(M,N)$ the set of all $r$-differentiable functions $f : M \to N.$ Over this set we shall always adopt the {\it strong $C^s$  Whitney topologies}, $0\leq s\leq r$ (see, e.g., \cite{hirsch_differential_1976} or \cite{mukherjee_differential_2015} for a comprehensive introduction to Whitney topologies, and  \cite{Mather} for some of the more technical results used here). A basis for the (strong) $C^s$ Whitney topology on $C^r(M,N)$ ($s<\infty$) can be most readily defined via jet bundles as follows. Given the bundle $J^s(M,N)$ of $s$-jets of $r$-differentiable maps, for each open set $\mathcal O \subseteq J^s(M,N)$ put 
	\begin{equation*}\label{eq:basisjet}
		B_s(\mathcal{O}) = \{f \in C^r(M,N) :   j^s f(M) \subseteq \mathcal O\},
	\end{equation*}
	so the collection of all sets of this form is the desired basis. A basis for the so-called Whitney $C^\infty$ topology on $C^\infty(M,N)$ is defined by taking as a basis the collection of all $C^s$-open sets, for all $0 \leq s < \infty.$

	In particular, consider a smooth fiber bundle $E$ over $M.$ Then the set $\Gamma^r(E)$ of its $r$-differentiable sections will also said to be given the $C^s$ topology when we take $\Gamma^r(E) \subseteq C^r(M, E)$, adopt on the latter set the $C^s$ Whitney topology, and endow $\Gamma^r(E)$ with the corresponding induced topology.

	An alternative, perhaps more concrete way of defining Whitney topologies is by using local charts. However, this discussion is somewhat lengthy and we refer to the standard literature for the details (conf., e.g., \cite[35]{hirsch_differential_1976}) or \cite[pp. 237-239]{mukherjee_differential_2015}).

	\begin{center}
		\textit{ $\Gamma^r(L)$ will always be assumed to be endowed with the (induced) $C^s$ topology as described. In most of our main results we shall take $s=2\leq r\leq \infty$. In case we fix some $s \in \mathbb{Z}_+$ in connection with the Whitney topology $C^s$ in a statement, that statement is meant to hold separately on $\Gamma^r(L)$ for each $s\leq r \leq \infty$.}
	\end{center}

	In the seminal work \cite{lerner_space_1973} Lerner obtained a number of key results using the strong Whitney topologies on $\Gamma^r(L)$ - some of which are briefly reviewed in the next section - and these have ever since been widely accepted as the natural topologies to adopt in the particular geometric setting of interest here. In any case, we work exclusively with the strong Whitney topologies on $\Gamma^r(L)$ in this paper, and from now on by \emph{$C^s$  topology} we always mean the strong $C^s$ Whitney topology.  
	
	Recall that such topologies are not even first countable if the domain manifold is not compact, so we will rely on net convergence arguments when needed. Also, recall that by a $C^s$ \emph{stable property} in $C^r(M,N),$ [resp. $\Gamma^r(E)$] we mean a property that is valid for all functions in a $C^s$ open subset of $C^r(M,N)$ [resp. $\Gamma^r(E)$]. For each $t > s$ the $C^t$ topology is finer than the $C^s$ topology, so a stable propriety in $C^s$ is also stable in $C^t.$

	\subsection{Previous stability results}\label{subsecbasic}	
	To keep the presentation reasonably self-contained as well as to establish further notation, we reproduce here, for later reference, some classic results established in \cite{lerner_space_1973} that will be relevant for us later on. 
	
	Fix some $0\leq r$ which is either an integer or else $r=\infty$. Denote by $\mathscr{ST}^r\subset \Gamma^r(L)$ the set of $r$-differentiable metrics $g$ such that $(M,g)$ is time-orientable. The first relevant result is the $C^0$-stability of time-orientability. 
	
	\begin{proposition}[\cite{lerner_space_1973}, prop. 4.7]\label{lernerref1}
		Let $X\in \mathfrak{X}(M)$ be an everywhere nonzero vector field. Then the set 
		$$\mathscr{ST}^r(X):= \{g\in \Gamma^r(L) \; : \; \text{$X$ is $g$-timelike}\}$$
		is $C^0$-open in $\Gamma^r(L)$. In particular, $\mathscr{ST}^r$ is $C^0$- open (and therefore $C^s$-open for each $0\leq s\leq r$) in $\Gamma^r(L)$. \hfill  $\Box$
	\end{proposition}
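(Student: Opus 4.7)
The plan is to realize $\mathscr{ST}^r(X)$ directly as (the trace on $\Gamma^r(L)$ of) a basic open set of the $C^0$ Whitney topology, by producing an appropriate open set in the $0$-jet bundle $J^0(M,L) \cong M \times L$. Because $\Gamma^r(L)$ inherits its topology from $C^r(M,L)$, openness of $\mathscr{ST}^r(X)$ in $\Gamma^r(L)$ will follow as soon as openness in $C^r(M,L)$ is established.

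Concretely, I would first define a continuous function $F\colon L \to \mathbb{R}$ sending a symmetric bilinear form $b_p \in L_p$ to $F(b_p) := b_p(X_p, X_p)$, using that $X$ is a smooth section of $TM$ and that fiberwise evaluation depends continuously on the base point. Setting $W := F^{-1}\bigl((-\infty, 0)\bigr)$, which is an open subset of $L$, the product $\mathcal{O} := M \times W$ is open in $J^0(M,L)$. Since for any section $g\in\Gamma^r(L)$ the $0$-jet is simply $j^0 g(p) = (p, g(p))$, the inclusion $j^0 g(M) \subseteq \mathcal{O}$ is equivalent to $g_p(X_p, X_p) < 0$ at every $p \in M$, i.e., to $X$ being $g$-timelike. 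This identifies $\mathscr{ST}^r(X)$ with $B_0(\mathcal{O}) \cap \Gamma^r(L)$ and hence yields the first assertion.

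For the openness of $\mathscr{ST}^r$ itself, I would invoke the standard characterization that $(M,g)$ is time-orientable if and only if it admits a nowhere-zero smooth $g$-timelike vector field (this follows from a partition-of-unity argument using local time-orientations). Consequently $\mathscr{ST}^r = \bigcup_X \mathscr{ST}^r(X)$, with $X$ ranging over all nowhere-zero smooth vector fields on $M$; as a union of $C^0$-open sets this is $C^0$-open, and because the $C^s$-topology refines $C^0$ for every $s \geq 0$, it is also $C^s$-open.

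I expect no substantial obstacle: the whole argument is a direct unwinding of the jet-bundle definition of the strong Whitney topology. The only point worth flagging is that the ``tube-like'' open set $M \times W$, although strictly smaller than a generic open subset of $J^0(M,L)$, already suffices to cut out $\mathscr{ST}^r(X)$ --- which is immediate because the defining condition ``$X$ is $g$-timelike'' is pointwise and therefore decouples over the base.
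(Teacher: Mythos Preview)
The paper does not supply its own proof of this proposition; it is quoted from Lerner's paper and marked with a terminal $\Box$. Your argument is a correct, self-contained proof: realizing the condition ``$g_p(X_p,X_p)<0$ for all $p$'' as the requirement $j^0g(M)\subseteq M\times W$ for the open set $W=F^{-1}((-\infty,0))\subset L$ is exactly the jet-bundle description of a basic $C^0$-open set, and the continuity of $F$ is clear in local trivializations. The identification $\mathscr{ST}^r=\bigcup_X\mathscr{ST}^r(X)$ via the existence of a global timelike vector field on time-orientable Lorentz manifolds is the standard argument and is valid. There is nothing to correct.
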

	
	A convenient aspect of working with $\mathscr{ST}^r(X)$ is that we can {\it simultaneously} choose the time-orientation in all of its elements so that $X$ is future-directed, and we shall implicitly assume this choice from now on. Fix a codimension $k$ submanifold $\Sigma \subseteq M$, and denote by $\mathscr{S}_\Sigma^r$ the set of metrics in $\Gamma^r(L)$ for which $\Sigma$ is spacelike. 
	
	\begin{proposition}[\cite{lerner_space_1973}, prop. 4.2]\label{lernerref2}
		$\mathscr{S}_\Sigma^r$ is $C^0$-open in $\Gamma^r(L)$.  \hfill  $\Box$
	\end{proposition}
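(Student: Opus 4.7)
The plan is, given $g_0 \in \mathscr{S}_\Sigma^r$, to produce an open set $\mathcal{O}$ in the $0$-jet bundle whose corresponding basic Whitney neighborhood
\[
B_0(\mathcal{O}) = \{g\in \Gamma^r(L) : (p,g(p))\in \mathcal{O} \text{ for all } p \in M\}
\]
contains $g_0$ and is entirely contained in $\mathscr{S}_\Sigma^r$. Intuitively, $\mathcal{O}$ is a continuous ``thickening'' of the graph of $g_0$ inside the total space of $L$ which is thin enough along $\Sigma$ that positive-definiteness on the tangent spaces of $\Sigma$ is preserved.

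First, I would fix an auxiliary smooth Riemannian metric $h$ on $M$ (available by paracompactness), which induces a continuous fiberwise norm $|\cdot|_h$ on $Sym^2(M)$ for measuring $C^0$-closeness of bilinear forms. Next, at each $p\in \Sigma$, I would pick an adapted chart $(U_p;x^1,\dots,x^{m+k})$ realizing $\Sigma\cap U_p$ as the slice $\{x^{m+1}=\cdots=x^{m+k}=0\}$, so that $\partial_{x^1},\dots,\partial_{x^m}$ frame $T\Sigma$ along $\Sigma\cap U_p$. In this frame, spacelikeness of a symmetric $\eta$ at $q\in \Sigma\cap U_p$ is just positive-definiteness of the upper-left $m\times m$ block of its coefficient matrix, which is an open condition since the eigenvalues of a symmetric matrix depend continuously on its entries. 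A local computation then yields a continuous positive tolerance $\epsilon_p\colon U_p\cap \Sigma \to \R_{>0}$ such that $|\eta - g_0(q)|_h < \epsilon_p(q)$ forces $\eta|_{T_q\Sigma}$ positive-definite for every $q\in U_p\cap \Sigma$.

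The main step is then to glue the local tolerances $\{\epsilon_p\}_{p\in\Sigma}$ into a single continuous, strictly positive function $\epsilon\colon M\to \R_{>0}$ so that the tube
\[
\mathcal{O} = \{(q,\eta)\in M\times L : |\eta - g_0(q)|_h < \epsilon(q)\}
\]
is open in the $0$-jet bundle, contains the graph of $g_0$, and forces spacelikeness at every point of $\Sigma$. Taking a locally finite refinement of $\{U_p\}_{p\in\Sigma}$, adding a complementary open cover of $M\setminus \Sigma$ on which $\epsilon$ is free to take any positive value, and combining with a subordinate partition of unity produces the required $\epsilon$. The resulting $B_0(\mathcal{O})$ is a basic $C^0$-Whitney open neighborhood of $g_0$, and by construction every $g\in B_0(\mathcal{O})$ has $\Sigma$ spacelike.

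The main technical obstacle is the gluing step, specifically when $\Sigma$ is merely embedded and not closed in $M$: adapted charts exist only at points of $\Sigma$ itself, so near points of $\overline{\Sigma}\setminus \Sigma$ one must arrange $\epsilon$ to remain strictly positive while still controlling all nearby $\Sigma$-points. The strong Whitney topology is well-suited to absorb this difficulty, since it imposes no uniformity condition over non-compact regions and therefore permits $\epsilon$ to decay arbitrarily fast near such boundary points without compromising openness of $\mathcal{O}$.
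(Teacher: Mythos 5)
The paper offers no proof of this proposition --- it is quoted from Lerner with a citation --- so your tube construction can only be judged on its own terms. When $\Sigma$ is a \emph{closed subset} of $M$ (in particular in every application in this paper, where $\Sigma$ is compact), your argument is correct and is the standard one: the slice charts $\{U_p\}_{p\in\Sigma}$ together with the open set $M\setminus\Sigma$ cover $M$; the local tolerances glue by a partition of unity (a convex combination of valid tolerances at a point is again valid, since validity is downward closed and bounded by the largest term); and the resulting $\mathcal O=\{(q,\eta): |\eta-g_0(q)|_h<\epsilon(q)\}$ is open in $L$ with $B_0(\mathcal O)\subseteq \mathscr{S}^r_\Sigma$.

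Your final paragraph, however, is where the argument breaks, and the difficulty you try to absorb there is not absorbable: the statement is actually \emph{false} for an embedded $\Sigma$ that is not closed in $M$. Two concrete problems. First, the covering: any open subset of $M$ containing a point $q\in\overline{\Sigma}\setminus\Sigma$ necessarily meets $\Sigma$, so on the ``complementary'' pieces of your cover $\epsilon$ is not free after all. Second, and decisively, openness of $\mathcal O$ in the total space of $L$ forces the tolerance to be bounded below by a positive constant on a whole neighborhood of $q$, hence at the points of $\Sigma$ accumulating on $q$; the strong Whitney topology only dispenses with uniformity on sets that eventually leave every compact subset of $M$, not at interior accumulation points of $\Sigma$. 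For a counterexample, take $M=\mathbb{R}^2$ with the Minkowski metric $g_0=-dt^2+dx^2$ and $\Sigma=\{(t,x): t=x^2/2,\ 0<x<1\}$, an embedded spacelike arc whose tangent $(x,1)$ satisfies $g_0$-norm $1-x^2\to 0$ as $x\to 1$. Any open $\mathcal O\supseteq g_0(M)$ contains a tube of some uniform radius $\rho>0$ over a compact neighborhood of the limit point $(1/2,1)$, and the metric $g=-(1+\sigma\chi)\,dt^2+dx^2$, with $\chi$ a bump concentrated at a point of $\Sigma$ with $x$ close to $1$ and $(1-x^2)/x^2<\sigma<\rho$, lies in $B_0(\mathcal O)$ yet renders $T\Sigma$ timelike there. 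So the correct fix is not a cleverer $\epsilon$ but an added hypothesis: state and prove the proposition for $\Sigma$ closed in $M$ (which is all the paper uses), where your construction goes through exactly as in your first three paragraphs.
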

	Consider also the set $\mathcal{SC}^r\subset \mathscr{ST}^r$ of (time-orientable) metrics that are \emph{stably causal}, meaning that $g \in \mathcal {SC}^r$ if there is a $C^0$-neighborhood $\mathcal{U} \ni g$ in $\Gamma^r(L)$ such that every $g'\in \mathcal{U}$ is a time-orientable causal metric. The set $\mathcal{SC}^r$ is nonempty because $M$ is noncompact \cite[p. 27, item (a)]{lerner_space_1973}. It is $C^0$-open by definition, and if we denote as $\mathcal {CH}^r \subset \mathscr{ST}^r$ the set of chronological, time-orientable metrics, then this set is $C^0$-closed in $\mathscr{ST}^r$, with $\overline{\mathcal {SC}} = \mathcal{CH}$ (closure in the $C^0$ topology)\cite[p. 27, item (b)]{lerner_space_1973}. 
	
	Thus, {\it stably causal metrics are \emph{$C^0$-generic} in the set of chronological metrics, in the sense that they form an open dense set of the latter, or equivalently, that chronological but not stably causal metrics form a nowhere dense subset of the set of all chronological metrics.}
	
	We emphasize, however, that unlike stability, this does {\it not} imply $C^s$-genericity for $s>0$. This is so, of course, because while $\mathcal{SC}^r$ would still be $C^s$-open in $\mathcal {CH}^r$, it might no longer be dense in this finer topology. {\it A consequence for us here is that in order to obtain our higher-order genericity results we shall need to work with stably causal metrics even if the needed singularity theorems only require chronology.}

	Assume now $r\geq 2$ and denote by $\mathcal{SE}^r$ the set of $r$-differentiable metrics $g \in \Gamma^r(L)$ for which the respective Ricci tensor, denoted by $Ric(g),$ satisfies
	\[
	Ric(g)(v,v) >0, \quad v\in TM \text{ $g$-causal}.
	\] 
	This set is $C^2$-open in $\Gamma^r(L)$ \cite[Prop. 4.3]{lerner_space_1973}. Importantly for us here, for each $g\in \mathcal{SE}^r$ all $g$-causal vectors are {\it generic} in $(M,g)$ in the sense of \cite[ch. 2]{BeemGLG}.
	
	Similarly, consider $\mathcal E^r$ the set of $r$-differentiable metrics $g \in \Gamma^r(L)$ satisfying
	\[
	Ric(g)(v,v) \geq 0, \quad v\in TM \text{ $g$-causal}.
	\]
	This set is $C^2$-closed, with $\overline{\mathcal {SE}^r} \subseteq \mathcal E^r$, where now the overbar indicates $C^2$-closure \cite[p. 28, item 4.4]{lerner_space_1973}. The main result  \cite[Prop. 4.5]{lerner_space_1973} we will need in our later arguments is the following relation between $ \mathcal E^r$ and $ \mathcal {SE}^r$:
	
	\begin{theorem}\label{teo:riccinowwhere}
		In the $C^2$ strong Whitney topology, $int(\mathcal E^r) = \mathcal{SE}^r$, for all $2\leq r\leq \infty$. In particular, $\mathcal{S E}^r$ is prevalent in $\mathcal E^r.$
		\hfill $\ensuremath{\Box}$
	\end{theorem}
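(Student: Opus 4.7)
The plan is to prove the two inclusions $\mathcal{SE}^r \subseteq int(\mathcal E^r) \subseteq \mathcal{SE}^r$ separately. The first is immediate: $\mathcal{SE}^r \subseteq \mathcal E^r$, and $\mathcal{SE}^r$ is $C^2$-open by Lerner's Proposition~4.3 cited just above. The prevalence statement is then essentially free: once $int(\mathcal E^r) = \mathcal{SE}^r$ is established, Lemma~\ref{topologicallemma} applied with $U := \mathcal{SE}^r$ and $C := \mathcal E^r$ yields that $\mathcal E^r \setminus \mathcal{SE}^r$ is nowhere dense, hence meager, in $\Gamma^r(L)$.

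For the substantive inclusion $int(\mathcal E^r) \subseteq \mathcal{SE}^r$, I would argue contrapositively. Fix $g \in \mathcal E^r \setminus \mathcal{SE}^r$, so that there exist $p_0 \in M$ and a $g$-causal vector $v_0 \in T_{p_0}M$ with $Ric(g)(v_0,v_0) = 0$. The goal is to exhibit a family $g_\varepsilon$ converging to $g$ in the $C^2$ Whitney topology with $g_\varepsilon \notin \mathcal E^r$. The natural ansatz is a localized conformal perturbation $g_\varepsilon := e^{2\varepsilon\phi}\, g$, where $\phi \in C^\infty(M)$ is $\varepsilon$-independent and compactly supported in a small chart centered at $p_0$. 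Conformality has two convenient features: Lorentzian signature (and time-orientation) is automatically preserved, so $g_\varepsilon \in \Gamma^r(L)$; and causal cones do not change, so $v_0$ remains $g_\varepsilon$-causal without any further adjustment.

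The key step is the standard pointwise conformal formula for the Ricci tensor. Arranging $d\phi(p_0)=0$ kills the $d\phi \otimes d\phi$ terms at $p_0$ and gives, with $n := m+k$ and $H := \nabla^2 \phi(p_0)$,
\[
Ric(g_\varepsilon)(v_0,v_0)\big|_{p_0} = -\varepsilon\bigl[(n-2)\,H(v_0,v_0) + (\operatorname{tr}_g H)\, g(v_0,v_0)\bigr] + O(\varepsilon^2).
\]
Thus it suffices to produce a symmetric bilinear form $H$ on $T_{p_0}M$ satisfying
\[
(n-2)\, H(v_0,v_0) + (\operatorname{tr}_g H)\, g(v_0,v_0) > 0,
\]
and then realize $H$ as the Hessian at the origin of a compactly supported smooth function on $M$ (for instance by multiplying a quadratic polynomial with Hessian $H$ at $0$ by a bump function equal to $1$ near the origin). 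Picking the chart so that $g_{p_0} = \eta$ is Minkowski and taking $H := \delta - g_{p_0}$, where $\delta$ is the Euclidean metric of the chart, yields a choice that works for both null and timelike $v_0$, provided $n \geq 3$ (which holds by assumption on $M$). Since $\varepsilon\phi$ has fixed compact support and shrinks to zero in $C^2$, the family $g_\varepsilon$ converges to $g$ in the Whitney $C^2$ topology, contradicting $g \in int(\mathcal E^r)$.

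The main obstacle I anticipate is the uniform treatment of null and timelike $v_0$ in verifying the sign inequality: for timelike $v_0$ the trace term $(\operatorname{tr}_g H)\, g(v_0,v_0)$ is typically negative, so a naive positive-definite $H$ can be defeated in very timelike directions; one must balance $H(v_0,v_0)$ and $(\operatorname{tr}_g H)\, g(v_0,v_0)$ carefully. The choice $H = \delta - g_{p_0}$ resolves this because a direct computation in the Minkowski chart shows the bracketed quantity is a strictly positive linear combination of $|v_0|^2_{\mathrm{eucl}}$ and $|g(v_0,v_0)|$, which is where the hypothesis $n \geq 3$ enters decisively. All remaining steps (fitting $g_\varepsilon$ into an arbitrarily prescribed Whitney $C^2$-neighborhood, controlling the $O(\varepsilon^2)$ remainder, and invoking Lemma~\ref{topologicallemma} to upgrade $int(\mathcal E^r) = \mathcal{SE}^r$ to prevalence) are routine.
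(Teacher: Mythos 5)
Your proposal is correct, but it is doing more work than the paper does: the paper does not prove the identity $int(\mathcal E^r)=\mathcal{SE}^r$ at all — it imports it verbatim from Lerner (\cite[Prop.~4.5]{lerner_space_1973}) and only adds the one-line observation that prevalence then follows from \cref{topologicallemma}, exactly as in your first paragraph. What you supply in addition is a self-contained proof of the substantive inclusion $int(\mathcal E^r)\subseteq\mathcal{SE}^r$ by a compactly supported conformal perturbation $g_\varepsilon=e^{2\varepsilon\phi}g$ with $d\phi(p_0)=0$; this is precisely the technique the paper itself deploys for the analogous statements about the mean curvature vector (\cref{teo:FAnowhere}) and the Riemann tensor (\cref{teo:rie}), so your argument is very much in the paper's spirit even though the paper outsources this particular instance. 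Your key computation checks out: with $g_{p_0}=\eta$ and $H=\delta-\eta$ one gets $\operatorname{tr}_g H=-2$ and
\[
(n-2)H(v_0,v_0)+(\operatorname{tr}_g H)\,g(v_0,v_0)=(n-2)\,|v_0|^2_{\mathrm{eucl}}-n\,g(v_0,v_0)\;\geq\;(n-2)\,|v_0|^2_{\mathrm{eucl}}>0
\]
for every nonzero causal $v_0$ once $n\geq 3$, and your identification of the pitfall with a naive positive-definite choice (e.g.\ $H=\delta$ degenerates on purely timelike directions) is accurate. The remaining ingredients — conformal invariance of the causal cones, exactness of the linear-in-$\varepsilon$ formula at a critical point of $\phi$, and the fact that perturbations supported in a fixed compact set converging uniformly in $C^2$ converge in the strong Whitney $C^2$ topology — are all standard and are used elsewhere in the paper, so the argument is complete.
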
		
	
	(The first sentence in theorem \ref{teo:riccinowwhere} is actually the statement as proved by Lerner, but the second sense follows immediately by lemma \ref{topologicallemma}.)
	
	\begin{remark}\label{rmk1}
		{\em The set $\mathcal E^r$ consists precisely of those metrics $g$ satisfying the so-called {\it timelike convergence condition}: $Ric(g)(v,v)\geq 0, \forall v\in TM$ timelike. (The inequality also applies to null vectors via limits). This is often referred to as the {\it strong energy condition} in the physics literature, because it arises via the Einstein field equation in the context of general relativity, by coupling the spacetime metric with physically relevant classical matter fields, most of which satisfy it. Hence, it is a very common assumption in singularity theorems. Theorem \ref{teo:riccinowwhere} has thus a simple but very suggestive meaning: only those metrics in a ``negligibly small'' subset of the metrics satisfying the timelike convergence condition do not admit of an arbitrarily close approximation by a metric in $\mathcal{SE}^r$, which is often causal geodesic incomplete.} 
	\end{remark}
	
	\subsection{Initial Data and MOTS}\label{sec:motsandID}
	Since we shall focus on initial data sets that have a MOTS on the last part of the paper (section \ref{sectmain3}), we now review these notions and some main results related to it. 
	\begin{definition}[Initial Data Set]\label{def:InitialDataSet}
		An \emph{initial data set} is a triple $(\mathcal S^n, h, \mathcal{K})$ where $(\mathcal S^n, h)$ is a Riemannian manifold and $\mathcal{K}$ is a symmetric $(0, 2)$-tensor field on $\mathcal S$. Given an initial data $(\mathcal S, h, \mathcal{K})$, we define a function $\rho \in C^{\infty}(\mathcal S)$ and a one-form $J \in \Omega^1(\mathcal S)$, called respectively the \emph{energy density} and \emph{energy-momentum current} associated with the data by
		\begin{equation}
			\begin{aligned}\label{constreq}
				\rho &= \frac{1}{2} \left[	\mathrm{Scal}_h-|\mathcal{K}|_h^2+(\trace_h \mathcal{K})^2 \right], \\
				J &= \divg_h \mathcal{K} - d(\trace_h \mathcal{K}). 
			\end{aligned}
		\end{equation}
	\end{definition}

	The seminal work of \textcite{choquet-bruhat_global_1969} shows that if $(\mathcal{S},h,\mathcal{K})$ a \textit{vacuum initial data set}, i.e., one for which $\rho =0$ and $J=0$, then the manifold $\mathcal{S}$ can be viewed as a suitably embedded Cauchy spacelike huypersurfaces in a uniquely defined maximal globally hyperbolic Ricci-flat spacetime. We summarize this result in the following theorem:

	\begin{theorem} \label{theo:CB_52_69}
		Let $(\mathcal S^n, h)$ be a vacuum data set. Then there exists an $(n+1)$-dimensional Ricci-flat spacetime $(M^{n+1}, g)$ such that $(\mathcal S^n, h)$ isometrically embeds into $(M, g)$ as a Cauchy hypersurface with second fundamental form $\mathcal{K}$. Furthermore, there is a unique (up to isometry) \emph{maximal} such spacetime, in the sense that any other spacetime  satisfying these conditions can be isometrically embedded therein.  \hfill $\ensuremath{\Box}$
	\end{theorem}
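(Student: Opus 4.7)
The plan is to prove this by combining the classical local existence argument of Choquet-Bruhat with the Zorn-type patching of Choquet-Bruhat and Geroch to produce a unique maximal globally hyperbolic development. The strategy splits naturally into a PDE analytic part (local existence) and a geometric/set-theoretic part (uniqueness and maximality).

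For the local existence, I would first reduce the vacuum Einstein equations $Ric(g) = 0$ to a quasilinear hyperbolic system by imposing the harmonic (wave) gauge: in local coordinates $x^\mu$ one requires $\Box_g x^\mu = 0$, which forces the so-called reduced Ricci tensor to differ from $Ric(g)$ by terms involving the contracted Christoffel symbols $\Gamma^\mu = g^{\alpha\beta}\Gamma^\mu_{\alpha\beta}$. In this gauge the equations become a quasilinear system of wave equations for the metric components, to which standard symmetric-hyperbolic existence theory applies. One then chooses initial data $(g_{\mu\nu}, \partial_t g_{\mu\nu})|_{\mathcal S}$ realizing the prescribed $(h, \mathcal K)$ and satisfying the harmonic gauge condition at $t=0$; this is always possible by a purely algebraic choice of $\partial_t g_{0\mu}$. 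Solving the reduced system yields a Ricci-flat metric on a tubular neighborhood of $\mathcal S$ in $\mathcal S \times (-\epsilon,\epsilon)$ provided one checks gauge propagation, namely that the vector field $\Gamma^\mu$ vanishes identically. This last step is where the vacuum constraint equations $\rho=0$, $J=0$ enter: they imply $\Gamma^\mu = 0$ and $\partial_t \Gamma^\mu = 0$ on $\mathcal S$, and the reduced equations imply that $\Gamma^\mu$ itself satisfies a linear homogeneous wave equation, so it vanishes identically by uniqueness for linear hyperbolic PDE. Then $Ric(g) = 0$ genuinely holds, and shrinking the neighborhood if necessary gives a globally hyperbolic Ricci-flat development with $\mathcal S$ as Cauchy hypersurface.

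For the uniqueness and maximality part, I would first establish a local geometric uniqueness lemma: any two globally hyperbolic vacuum developments of the same data are isometric on a common globally hyperbolic neighborhood of $\mathcal S$. This is proved by choosing harmonic coordinates based on $\mathcal S$ in each development and invoking PDE uniqueness for the reduced system. With this in hand, consider the collection $\mathcal D$ of all globally hyperbolic vacuum developments of $(\mathcal S, h, \mathcal K)$, partially ordered by the existence of an isometric embedding fixing $\mathcal S$. The goal is to produce a maximal element via Zorn's lemma. The delicate issue is that a chain of developments need not admit a pointwise direct limit that is Hausdorff, since different maximal extensions of the same initial data might a priori disagree along the boundary of their common domain and create branching points; one must verify, using the local geometric uniqueness, that any two developments in the chain agree on the largest common globally hyperbolic subset, and that the resulting glued spacetime is Hausdorff. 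This Hausdorffness step is essentially the hard part of Geroch's argument and relies crucially on global hyperbolicity together with the pointwise uniqueness of maximizing causal geodesics.

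The principal obstacle is therefore not the PDE existence, which is standard, but the Zorn/Hausdorff construction of the unique maximal development: one must rule out the pathological possibility that two extensions of the same globally hyperbolic spacetime share a common interior but branch at some boundary point. Once this is settled, maximality and uniqueness follow immediately: the maximal element $(M,g)$ produced by Zorn contains every other development isometrically over $\mathcal S$, and any other spacetime with the stated properties must embed into it by the same local uniqueness argument applied iteratively along any Cauchy foliation.
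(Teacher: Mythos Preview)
Your outline is a faithful sketch of the classical Choquet-Bruhat / Choquet-Bruhat--Geroch argument and is essentially correct as a proof strategy. However, the paper does not actually prove this theorem: it is stated as a background result imported from the literature (the reference to \textcite{choquet-bruhat_global_1969}), with no proof given beyond the $\Box$ marker. So there is nothing to compare against --- your proposal simply supplies what the paper deliberately omits, and it does so along the standard lines one finds in the original sources.
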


	For our applications we are interested in MOTS and its null expansion scalar from an initial data set point of view. Let $\Sigma^{n-1} \subseteq \mathcal S^{n}$ be an embedded submanifold,
	and recall that $\Sigma$ is \emph{two-sided} if it has a trivial normal bundle in $\mathcal S.$   
	
	
	\begin{definition}[Null expansion and MOTS - Initial data version]
		Let $(\mathcal S, h, \mathcal{K})$ be an initial data set and let $\Sigma \subseteq \mathcal S$ be a two-sided embedded codimension one submanifold, with one of the two normal vectors $\bm \upnu\in \mathfrak{X}^\perp(\Sigma)$ fixed, and referred to as the \emph{outward-pointing unit normal} vector field on $\Sigma.$ The outward \emph{null expansion} $\theta_+$ [resp. inward \emph{null expansion} $\theta_-$] of $\Sigma$ is defined as
		\begin{equation}\label{eq:NullExpansionInitdata}
			\theta_\pm = \trace_\Sigma \mathcal{K} \pm H_{\bm{\upnu}},
		\end{equation}
		where $H_{\bm \upnu}$ is the mean curvature scalar of $\Sigma$ with respect to the normal ${\bm{\upnu}}$ and $\mathcal{K}$ is the second fundamental form of $\Sigma$ with respect to the normal ${\bm{\upnu}}$ and the (partial) trace on $\Sigma$ is taken with respect to the induced metric. Then, $\Sigma$ is said to be
		\begin{itemize}
			\item outer trapped if $\theta_{+} < 0,$
			\item weakly outer trapped if $\theta_{+} \leq 0,$
			\item marginally outer trapped (MOTS) if $\theta_{+} = 0.$
		\end{itemize}
	\end{definition}
	As mentioned in the Introduction, MOTS are important in general relativity because they provide a ``quasi-local'' analogue of the fully global notion of \textit{(black hole) event horizon}, and as such can be adapted to numerical studies, for example \textcite{Cook_2000}. In purely mathematical terms, however, MOTS are important because they provide spacetime/initial data analogues of minimal surfaces (cf. \textcite{danlee}, section 7.5). Indeed, any Riemannian manifold $(\mathcal S, h)$ can be viewed as an initial data set with $ \mathcal{K}\equiv 0$, and MOTS therein are precisely the minimal surfaces. However, while minimal surfaces can be described via a variational formulation, there is no known such formulation for general MOTS.

	\subsubsection{MOTS stability operator}
	A powerful tool for studying minimal surfaces is the notion of \textit{stability}, which comes from the sign of the second variation of the volume measure; this concept of stability can be generalized to the setting of MOTS through the linearization of the null expansion $\theta_+.$ Such notion was introduced by \textcite{AnderssonMOTSStab} (see also \textcite{eduardothesis} for an in- depth review of the subject). For now, the relevant definition is the following. 
	
	\begin{definition}[MOTS stability operator - Initial data]	\label{def:OpStability}
		Let $\Sigma^{n-1}$ be a closed (that is, compact without boundary) MOTS within an initial data set $(\mathcal S^n, h, \mathcal{K})$. We define its \emph{stability operator} $L:C^\infty(\Sigma) \rightarrow C^\infty(\Sigma)$ as the elliptic differential operator given by
		\begin{equation}
			L(\psi) =  - \Delta \psi + 2 \inner{X} {\grad \psi} + (Q  
			+ \operatorname{div} X   - \| X \|^2 ) \psi, \quad \forall \psi \in C^\infty(\Sigma),
		\end{equation}
		where
		\begin{equation}\label{eq:Qdefinition_INITDATA}
			Q = \frac{1}{2} \mathrm{Scal}_\Sigma -[J(\bm{\upnu})+\rho] -  \frac{1}{2}| \mathcal{K}_{\bm{\upnu}} + \mathcal{K}|^2.
		\end{equation}
		Here, all the geometric objects are defined on $\Sigma$, $\bm{\upnu}$ is the outward pointing unit normal vector field on $\Sigma,$ $\mathcal K_{\bm{\upnu}}$ is the scalar second fundamental form of $\Sigma$ with respect to the induced metric from $(\mathcal S, h)$ on the direction $\bm{\upnu},$  $X$ is a vector field, the metric dual to the one-form $\mathcal{K}(\bm{\upnu}, \cdot)$ on $\Sigma$. Finally, $\rho$ and $J$ are defined as in \cref{def:InitialDataSet}.
	\end{definition}

	In the case of time-symmetric initial data ($ \mathcal K = 0$), the operator $L$ reduces to the self-adjoint, classic stability (or Jacobi) operator of minimal surface theory, which basically gives the second variation of the volume. Although the MOTS stability operator $L$ is not self-adjoint in general, it still possesses certain key spectral properties summarized in the next proposition.

	\begin{proposition}[\textcite{AnderssonMOTSStab}, \textcite{Galloway2018}]\label{prop:stabmots}
		Let $\Sigma$ be a closed MOTS within an initial data set $(\mathcal S^n, h, \mathcal{K})$. The following statements hold for the MOTS stability operator $L$.
		\begin{enumerate}
			\item There is a real eigenvalue $\lambda_1=\lambda_1(L)$, called the \emph{principal eigenvalue} of $L$, such that for any other eigenvalue $\mu$, $Re(\mu)\geq \lambda_1$. The associated eigenfunction $\phi \in C^{\infty}(\Sigma)$, $L\phi = \lambda_1 \phi$, is unique up to a multiplicative constant, and can be chosen to be strictly positive. 
			\item $\lambda_1 \geq 0$ (resp., $\lambda_1>0$) if only and if there exist some $\psi \in C^\infty(\Sigma), \psi>0$, such that $L(\psi) \geq 0$ (resp., $L(\psi)>0$).  \hfill $\ensuremath{\Box}$
			
		\end{enumerate}
		\label{lemma:PrincpEigenValueOpL}
	\end{proposition}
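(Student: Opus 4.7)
My plan is to reduce both parts to the Krein-Rutman theorem applied to a suitably shifted elliptic operator, combined with the strong maximum principle. The operator $L$ has principal symbol $-\Delta$, hence is second-order uniformly elliptic on the closed manifold $\Sigma$, but is not self-adjoint because of the drift term $2\langle X, \grad \psi\rangle$. The standard trick to handle the non-self-adjoint spectral theory on a closed manifold is to pass to an inverse.

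For part (1), first I would choose a constant $c>0$ large enough that the zeroth-order coefficient $Q+\divg X - \|X\|^2 + c$ is strictly positive; by the Lax--Milgram theorem (applied to the symmetrized principal part, with drift handled by absorbing) together with standard Schauder estimates, the shifted operator $L+c: C^{2,\alpha}(\Sigma)\to C^{0,\alpha}(\Sigma)$ is a topological isomorphism. Composing the inverse with the compact embedding $C^{2,\alpha}(\Sigma)\hookrightarrow C^{0,\alpha}(\Sigma)$ produces a compact operator $K:=(L+c)^{-1}$ on $C^{0,\alpha}(\Sigma)$. The strong maximum principle (applied to $L+c$ with nonnegative zeroth-order coefficient) shows that if $f\geq 0$ and $f\not\equiv 0$, then $Kf>0$ everywhere on $\Sigma$; that is, $K$ is \emph{strongly positive} with respect to the cone of nonnegative continuous functions, which has nonempty interior in $C^{0,\alpha}(\Sigma)$. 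The Krein--Rutman theorem then yields a simple, strictly positive eigenvalue $\mu_1 = \mathrm{spec.rad.}(K)$ of $K$ whose eigenfunction $\phi>0$ is unique up to scale and whose real part dominates that of any other eigenvalue of $K$. Setting $\lambda_1 := \mu_1^{-1}-c$ translates this back to the desired statement about $L$.

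For part (2), the forward implication is immediate: take $\psi=\phi$, so $L\phi = \lambda_1\phi$, which is $\geq 0$ (resp. $>0$) when $\lambda_1\geq 0$ (resp. $\lambda_1>0$) since $\phi>0$. For the reverse implication I would use the formal adjoint $L^*\psi = -\Delta\psi - 2\divg(\psi X) + (Q+\divg X - \|X\|^2)\psi$, which is again a second-order elliptic operator of the same form, so the argument of part (1) applied to $L^*$ produces a principal eigenvalue $\lambda_1^*$ with a strictly positive eigenfunction $\phi^*$. A compactness/duality argument (e.g., realizing $\lambda_1$ and $\lambda_1^*$ as $\mu_1^{-1}-c$ with the same spectral radius $\mu_1$ of $K$ and $K^*$) identifies $\lambda_1(L)=\lambda_1(L^*)$. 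Given $\psi>0$ with $L\psi\geq 0$, integration by parts gives
\[
\lambda_1 \int_\Sigma \phi^*\psi \,dA \;=\; \int_\Sigma (L^*\phi^*)\psi\,dA \;=\; \int_\Sigma \phi^*(L\psi)\,dA \;\geq\; 0,
\]
and since $\phi^*\psi>0$ we conclude $\lambda_1\geq 0$. For the strict version one observes that if $L\psi>0$ somewhere and $\psi>0$ throughout, then the right side above is strictly positive, forcing $\lambda_1>0$.

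The main technical obstacle is verifying the duality $\lambda_1(L)=\lambda_1(L^*)$ and the associated strong positivity of $(L^*+c)^{-1}$; the drift term $X$ flips sign under integration by parts, so the adjoint is not literally of the same form as $L$ but is of the same general elliptic class, and one must re-run the Krein--Rutman argument for $L^*$ rather than simply quote it. A secondary subtlety is the strong maximum principle for the \emph{strict} part of (2), where one needs that a supersolution $L\psi\geq 0,\ \not\equiv 0$ with $\psi>0$ forces the integral pairing with $\phi^*$ to be strictly positive; this follows from $\phi^*>0$ everywhere together with the fact that $L\psi$ is continuous and nonzero somewhere.
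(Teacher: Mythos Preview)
The paper does not actually prove this proposition: it is quoted from \textcite{AnderssonMOTSStab} and \textcite{Galloway2018}, with the $\Box$ placed at the end of the statement. Your Krein--Rutman/maximum-principle approach is in fact essentially the argument used in those references, so your outline is correct in spirit and would be accepted as a sketch.

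There is, however, one step you pass over too quickly. Krein--Rutman applied to $K=(L+c)^{-1}$ gives that the spectral radius $\mu_1$ is a simple eigenvalue and that every other eigenvalue $\mu$ of $K$ satisfies $|\mu|\le\mu_1$. Translating to $L$, this says only that $|\lambda+c|\ge \lambda_1+c$ for each eigenvalue $\lambda$ of $L$, which by itself does \emph{not} force $\Re(\lambda)\ge\lambda_1$ for a fixed $c$. The missing observation is that $\lambda_1$ is independent of the shift (since the positive eigenfunction $\phi$ solves $(L+c)\phi=(\lambda_1+c)\phi$ for every $c$), so $|\lambda+c|\ge\lambda_1+c$ holds for \emph{all} sufficiently large $c$; squaring and letting $c\to\infty$ then yields $\Re(\lambda)\ge\lambda_1$. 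You should make this explicit.

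Your argument for part (2) via the formal adjoint $L^{*}$ and the pairing $\int_\Sigma\phi^{*}(L\psi)$ is the standard one and is fine; the equality $\lambda_1(L)=\lambda_1(L^{*})$ follows because $K$ and its Banach adjoint share the same spectrum (hence the same spectral radius), and elliptic regularity identifies the positive dual eigenfunctional with a smooth positive $\phi^{*}$.
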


	\subsection{A singularity theorem in the presence of a MOTS}\label{sec:motssing}

	Let $\Sigma^{n-1} \subseteq \mathcal{S}^n$ be connected manifolds. We say that $\Sigma$ \emph{separates} $\mathcal S$ if $\mathcal S \setminus \Sigma$ is not connected. In this case, we write $\mathcal S \setminus \Sigma = \mathcal S_+ \sqcup \mathcal S_-,$ where $\mathcal S_{\pm}$ are open submanifolds of $\mathcal S.$  We now give a singularity theorem for spacetimes containing MOTS that will be of crucial importance for our discussion in the second and final part of this paper.

	\begin{theorem}\label{teo:motssingularity}
		Let $(M^{n+1},g)$ be a globally hyperbolic spacetime. Assume the following conditions. 
		\begin{itemize}
			\item[1)] The \emph{lightlike convergence condition} holds on $(M,g)$, i.e., $Ric(v,v)\geq 0, \forall v\in TM$ lightlike.
			\item[2)] There exists a spacelike Cauchy hypersurface $\mathcal S^{n}\subseteq M$ and a connected, compact MOTS $\Sigma^{n-1} \subseteq \mathcal S$ without boundary that separates $\mathcal S,$ with $\mathcal S \setminus \Sigma = S_+ \sqcup S_-,$ where $S_\pm$ are open disjoint sets in $S$ and $\overline{S_+}$ is not compact.
			\item[3)] The principal eigenvalue for the stability operator of $\Sigma$ is not zero.
		\end{itemize}
		Then, $(M,g)$ is null geodesically incomplete.
	\end{theorem}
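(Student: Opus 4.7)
The plan is to reduce the theorem to a Penrose-type singularity argument by first deforming the MOTS $\Sigma$ into a strictly outer trapped surface and then running a null focal-point / achronal-boundary argument. The deformation step depends essentially on the non-vanishing of the principal eigenvalue $\lambda_1$ in hypothesis (3), while the Penrose-type step brings in the lightlike convergence condition (1) together with the separation and non-compactness data in (2).

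For the first step, \cref{prop:stabmots} provides a smooth principal eigenfunction $\phi \in C^\infty(\Sigma)$ with $\phi > 0$ and $L\phi = \lambda_1 \phi$. Consider the smooth one-parameter variation $\Sigma_s = \{\exp_p(s\phi(p)\bm{\upnu}_p) : p \in \Sigma\}$ for $|s|$ small. The standard MOTS variation formula yields
\[
\left.\frac{d\theta_+^{(s)}}{ds}\right|_{s=0} = L\phi = \lambda_1 \phi,
\]
so, choosing the sign of $s$ opposite to the sign of $\lambda_1$, a first-order Taylor expansion gives $\theta_+^{(s)} < 0$ uniformly on the compact $\Sigma_s$ for all $|s|$ sufficiently small. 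The deformed $\Sigma_s$ still separates $\mathcal S$ into two open pieces $S_\pm^{(s)}$, and its ``outward'' piece $\overline{S_+^{(s)}}$ differs from $\overline{S_+}$ only by a compact collar (namely the image of $\Sigma \times [0,s]$ under the exponential flow), and is therefore still non-compact by hypothesis (2).

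For the second step I would adapt the classical Penrose argument to this outer-trapped (rather than fully trapped) setting. Suppose for contradiction that $(M,g)$ is null geodesically complete, and consider the congruence of future-directed null geodesics emanating from $\Sigma_s$ along the outward null normal $\ell_+$. The Raychaudhuri equation, together with the lightlike convergence condition and the strict inequality $\theta_+^{(s)} < 0$ on the compact $\Sigma_s$, forces a focal point on each generator within a uniformly bounded affine parameter. Hence $\partial J^+(\Sigma_s)$ sits inside the exponential image of a compact subset of the null normal bundle of $\Sigma_s$, and is itself compact. Global hyperbolicity of $(M,g)$ supplies a retraction $r \colon M \to \mathcal S$ along the flow of a complete timelike vector field which is a topological embedding when restricted to any achronal set; thus $r(\partial J^+(\Sigma_s))$ is a compact subset of $\mathcal S$. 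A causality argument invoking the separation of $\mathcal S$ by $\Sigma_s$ then shows that $r(\partial J^+(\Sigma_s)) \supseteq \overline{S_+^{(s)}}$, contradicting the non-compactness of the latter and completing the proof.

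The step I expect to be the main obstacle is precisely this final causality argument, namely verifying that the projection of $\partial J^+(\Sigma_s)$ into $\mathcal S$ covers the entire non-compact closure $\overline{S_+^{(s)}}$. For a fully trapped surface Penrose's original argument uses the control of both null expansions, but here only the outward congruence has a guaranteed focal point, so the separating property of $\Sigma_s$ in the Cauchy slice is what ensures that the outer null boundary ``reaches out'' to the far ends of $\overline{S_+^{(s)}}$ instead of collapsing into a compact inner region. Properly formalizing this requires a careful analysis of how $\partial J^+(\Sigma_s) \cap r^{-1}(\overline{S_+^{(s)}})$ fibers over $\overline{S_+^{(s)}}$, which is the only nontrivial piece of the argument.
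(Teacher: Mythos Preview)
Your deformation step is essentially identical to the paper's: both use the principal eigenfunction $\phi>0$ with $L\phi=\lambda_1\phi$ to push $\Sigma$ in the normal direction $\phi\,\bm\upnu$ and obtain a strictly outer trapped surface, choosing the sign of the displacement according to the sign of $\lambda_1$.

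The second step, however, is organized differently. You run a Penrose-style global argument on the full achronal boundary $\partial J^+(\Sigma_s)$ and then have to show its retraction to $\mathcal S$ covers the non-compact region $\overline{S_+^{(s)}}$; you rightly flag this covering statement as the delicate point, since only the outward congruence is controlled. The paper bypasses this entirely by localizing to a single geodesic: it first invokes (from \textcite{Silva_MOTS_2012}, Props.~2.1 and 2.4) that the separation hypothesis together with the non-compactness of $\overline{S_+}$ already forces the existence of a future-inextendible outward null $\Sigma$-ray $\eta$, i.e.\ a null geodesic normal to $\Sigma$ in the $\ell_+$ direction that globally maximizes Lorentzian distance to $\Sigma$. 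After the deformation this ray persists for the outer trapped $\Sigma_s$, and if it were future-complete then the lightlike convergence condition plus $\theta_+<0$ would produce a focal point along $\eta$ (O'Neill, Prop.~10.43), contradicting its maximality.

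Both routes are valid; the advantage of the paper's $\Sigma$-ray argument is that the separation/non-compactness data is consumed once in the existence-of-ray step, after which the contradiction is purely local along one geodesic, so the fibering analysis you anticipate never arises. Your approach would work too (it is essentially the Gannon--Lee / Eichmair--Galloway--Pollack line), but you would need to replace $\partial J^+(\Sigma_s)$ by $\partial J^+(\overline{S_-^{(s)}})$ to get an achronal set without edge whose null generators all emanate in the outward direction; that substitution is exactly what makes the covering of $\overline{S_+^{(s)}}$ automatic.
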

	
	\begin{proof}(Sketch)
		Our hypotheses are a restriction of a more general study for singularities in the presence of MOTS done by \textcite{Silva_MOTS_2012}, so we just give here an outline of the argument.
		
		Following \cite{Silva_MOTS_2012}, propositions 2.1 and 2.4, one shows using $(2)$, and even when $\Sigma$ is not a MOTS, the existence of a future-inextendible null $\Sigma$-ray $\eta : [0,a) \to M$ with $\eta'(0)$ parallel at the point $\eta(0)$ to $\ell_+ \in \vecfp{\Sigma}$, which is the unique future-pointing normal lightlike vector field that projects on $\Sigma$ onto the outward unit vector field defining $\theta_+$. 
		
		We assume in $(3)$ that the stability operator of $\Sigma$ has a non-zero principal eigenvalue $\lambda_1 \neq 0$, and since by \cref{prop:stabmots} its eigenfunction $\phi$ can be chosen to be strictly positive. Consider the normal variation $t\mapsto \Sigma_t$ of $\Sigma$ in $\mathcal S$ defined by with vector field $V  = \phi \upnu,$ where $\bm \upnu$ is the  outward pointing unit normal vector field of $\Sigma$ in $\mathcal S.$ One can show that 
		\[
		\left. \frac{\partial \theta_+}{\partial t} \right|_{t = 0} = L(\phi) = \lambda_1 \phi;
		\]
		therefore, under this variation we can deform $\Sigma$ to be outer-trapped  (i.e. $\theta_+<0$) by moving $\Sigma$ along the variation either outwards or inwards according to the sign of $\lambda.$ Still denoting this displaced manifold as $\Sigma,$ we then have an outward-pointing null $\Sigma$-ray $\eta : [0,a) \to M$ normal to this outer-trapped $\Sigma$ as argued above, and this null geodesic if future-incomplete because if it were complete, then by using $(1)$ and prop. 10.43 in \textcite{OneillSRG}, there would exist a focal point of $\Sigma$ along $\eta$ which is a contradiction since $\eta$ is a $\Sigma$-ray and hence globally maximal.
	\end{proof}
	
	\begin{remark} {\em 
			The assumption that $\mathcal S$ is separated by $\Sigma$ is not essential in the previous theorem, because even if that is not the case one can show (the proof is analogous to that of prop. 14.48 of \textcite{OneillSRG}) that there exists a smooth covering manifold of $M$ for which there exists a spacelike Cauchy hypersurface $\tilde{\mathcal{S}}$ covering $\mathcal{S}$ and containing a isometric copy of $\Sigma$ separating $\tilde{\mathcal{S}}$. Now, insofar as we are interested in geodesic incompleteness, one may as well work in covering manifolds. However, the non-compactness of the outer region $\overline{S_+}$ must still be enforced in that covering {(so that one can even allow for compactness in the base if the outer region of the covering is non-compact)}. Otherwise, one may consider the following counterexample. Let $(\mathbb{S}^n,\omega_n)$ be the round $n$-sphere with its standard round metric $\omega_n$, and let $(M:=\mathbb{R}\times \mathbb{S}^n, g:= -dt^2 + \omega_n)$ be the $(n+1)$-dimensional Einstein cylinder. One directly checks that $(M,g)$ is geodesically complete and satisfies the lightlike convergence condition. Moreover, $\mathcal S:= \{0\}\times \mathbb{S}^n$ is a totally geodesically spacelike Cauchy hypersurface, so any minimal sphere $\mathbb{S}^{n-1}\subset \mathbb{S}^n$ defines a two-sided separating MOTS $\Sigma \subset \mathcal S$ in the obvious manner. However, for any such the stability operator is shown by direct computation to reduce on $\Sigma$ to
			\begin{equation}\label{eqtointegrate}
				L(\phi)= - \Delta \phi +c\cdot \phi
			\end{equation}
			with $c>0$ is some positive constant. Taking $\phi$ to be a strictly positive principal eingenfunction, integration of \eqref{eqtointegrate} over $\Sigma$ allows one to conclude that $\lambda_1=c$. Therefore, this example satisfies all conditions in theorem \ref{teo:motssingularity} except the non-compactness of the Cauchy hypersurface, and the conclusion fails.}
	\end{remark}

	\section{Weakly trapped submanifolds and prevalence of singularities I: codimension two}\label{sectmain1}
	
	We fix for both this section and the next one a smooth embedded codimension $k\geq 2$ submanifold $\Sigma^m \subseteq M^{m+k}$. In some of our main results we shall need that $\Sigma$ be compact and without boundary, and then we will simply say that $\Sigma$ is \emph{closed}, and let the context distinguish this usage from the standard topological closure. In order to simplify the notation, unless stated otherwise we work in $\Gamma^2(L)$ endowed with the $C^2$ Whitney topology, with the understanding the everything remains valid for $r$-differentiable metrics with $r\geq 2$, and thus omit any $r$ superscripts in what follows. 
	
	Let $g \in \Gamma^2(L)$, and let $\nabla=\nabla^g$ denote its Levi-Civita connection. Recall that if $\Sigma$ is spacelike in $(M,g)$, then we can define the second fundamental form tensor (or {\it shape tensor} for short) of $\Sigma$ by 
	\[
	II^\Sigma(g)(V,W):=(\nabla_VW)^\perp, \quad \forall V,W\in \mathfrak{X}(\Sigma).
	\]
	where $\perp$ denotes the normal part with respect to $g$. Given any local $g$-orthonormal frame $\{E_1,\ldots,E_m\}\subset \mathfrak{X}(\Sigma)$ on $\Sigma$, the associated {\it mean curvature vector field of $\Sigma$} is
	\[
	H^\Sigma(g) := tr_\Sigma\, II^\Sigma(g) = \sum_{i=1}^m II^\Sigma(g)(E_i,E_i).
	\] 
	
	When the underlying metric is unambiguously understood, we shall denote the associated mean curvature vector field simply by $H^\Sigma.$ Straightforward coordinate computations and net convergence arguments similar to the ones given in the proof
	of \textcite[Prop. 4.7(b)]{lerner_space_1973} show that the mapping 
	\begin{equation}\label{eq:MCVcontinuity}
		g \in \Gamma^r(L) \mapsto H^\Sigma(g) \in \Gamma^0(TM|_\Sigma)
	\end{equation}
	is continuous in the $C^r$ topology on $\Gamma^r(L)$ for all $1\leq r\leq \infty$, where $TM|_\Sigma$ is the restriction of the tangent bundle $TM$ to $\Sigma$.

	Fix a nowhere-vanishing vector field $X\in \mathfrak{X}(M)$. We now proceed to define the topological space of metrics we shall consider in this section.
	
	Consider the set $\mathscr{ST}(X) \cap \mathscr{S}_\Sigma \cap \mathcal {SC}\subset \Gamma^r(L)$ of $C^2$ stably causal, time-oriented metrics for which $X$ is future-directed timelike, and for which $\Sigma$ is spacelike. Recall this set $C^2$-open in $\Gamma(L)$ conf. props. \ref{lernerref1} and \ref{lernerref2}). In that set, consider the subset $\mathcal A$ of metrics for which $\Sigma$ is also a future-trapped submanifold. Recall that $\Sigma$ is {\it future-trapped} if and only if $H^\Sigma_p$ is past-directed timelike for each $p\in \Sigma$. 
	
	More precisely, 
	\[ 
	\mathcal A = \{g \in \mathscr {ST} (X)\cap \mathscr{S}_\Sigma \cap \mathcal {SC} : 	g(H^\Sigma, H^\Sigma) < 0, g(H^\Sigma, X) > 0 \}.
	\]
	Since $H^\Sigma$ only depends on the metric coefficients and their first derivatives in suitable coordinates (conf. e.g. \textcite[Ex. 1, p. 123] {OneillSRG}) the definition of $\mathcal A$ readily implies (again by arguments entirely analogous to those in the proof of \textcite[Prop. 4.7(b)]{lerner_space_1973}) that this set is $C^1$-open in $\Gamma^2(L)$, and hence $C^2$-open therein as well. Consider also the set
	\[
	\mathcal{FA} = \{g \in \mathscr {ST} (X)\cap \mathscr{S}_\Sigma \cap \mathcal {SC} : 	g(H^\Sigma, H^\Sigma) \leq 0,  g(H^\Sigma, X) \geq 0 \}.
	\]
	
	Observe that for $g \in \mathcal{FA}$ and a point $ p\in \Sigma,$ either $H_p^\Sigma(g)$ is past-directed and causal, or else it is zero. Therefore, $\Sigma$ is weakly future-trapped for a metric $g \in \mathscr {ST} (X)\cap \mathscr{S}_\Sigma \cap \mathcal {SC}$ if and only if $g \in \mathcal {FA}.$ In the context of a fixed $\Sigma$ as we have here, we informally refer to the metrics in $\mathcal A$ themselves as ``future-trapped metrics,'' and analogously we say the metrics in $\mathcal{FA}$ are ``weakly future-trapped''. 
	
	In particular, $\mathcal{FA}$ includes metrics $g$ for which $\Sigma$ is an {\it extremal submanifold}, i.e., $H^\Sigma(g)\equiv 0$ identically, which occurs for example if $\Sigma$ is totally geodesic with respect to $g$. 
	\begin{remark}\label{motsrmk}
		{\em For another key class of examples of elements of $\mathcal{FA}$, assume for the moment that $\Sigma$ is closed and with codimension $k=2$, and that the normal bundle of $\Sigma$ is trivial. (Of course, this can be ensured independently of the choice of the ambient metric provided suitable orientability assumptions on $\Sigma$ are made.) Then, given $g \in \mathscr {ST} (X)\cap \mathscr{S}_\Sigma \cap \mathcal {SC}$ we may choose two normal future-directed null vector fields $\ell_{\pm} \in \mathfrak{X}^\perp(\Sigma)$ globally defined on $\Sigma$ and spanning its normal bundle.  The {\it null expansion scalars} $\theta_\pm \in C^1(\Sigma)$ associated with this choice are defined by
			\[
			\theta_\pm := -g(H^\Sigma(g),K_{\pm}).
			\]
			Observe that in this case $g\in \mathcal{A}$, i.e., it is future-trapped, if and only if $\theta_\pm <0$. If by convention we say that $\ell_+$ is {\it outward-pointing}, then $\Sigma$ is a {\it marginally outer trapped surface} (MOTS\footnote{MOTS are being redefined here just because we had previously considered them only on initial data sets. The relationship between the two definitions is elementary but the details are not important for us here, so we omit them.}) if $\theta_+\equiv 0$ on $\Sigma$. Then, for each $p\in \Sigma$, {since} $H^\Sigma_p,K_+(p)\in (T_p\Sigma)^\perp$ and the latter vector space is a two-dimensional Lorentz space, they can be orthogonal if and only if either $H^{\Sigma}_p$ is zero or null (parallel to $K_+$), and hence $g\in \mathcal{FA}$. In other words, $C^2$ metrics for which $\Sigma$ is a MOTS are all in $\mathcal{FA}$, i.e., MOTS are weakly future-trapped.}
	\end{remark}
	
	\medskip
	Returning now to our main discussion, we evidently have $\mathcal A \subseteq \mathcal {FA}.$ We will be able to prove much more regarding $\mathcal A$ and $\mathcal{FA},$ the main point being that $\mathcal A$ is prevalent in $\mathcal {FA}.$ Informally, {\it weakly future-trapped spacetime metrics can ``almost always'' be arbitrarily $C^2$-approximated by future-trapped ones.} 
	
	A key tool for these proofs will be conformal perturbations of the metric, so let us briefly recall how the mean curvature vector of $\Sigma$ transforms under a conformal change of the metric. Let $g\in \Gamma^2(L)$ and consider the usual conformal transformation $\widehat g = e^{2f} g,$ where $f : M \to \R$ is any smooth function.  The Levi-Civita connection transforms as (conf. \textcite{leeRiemann}, p. 217)
	\[
	\widehat \nabla_X Y = \nabla_X Y + (Xf)Y + (Yf)X - g(X,Y)\, grad_g f, 
	\]
	for $X, Y$ smooth vector fields over $M.$ Then, the shape tensor of $\Sigma$ associated with the metric $\widehat g$  is
	\[
	\widehat{II}(V,W) = 	(\widehat \nabla_V W)^\perp  = II(V,W)  - g(V,W)\, (grad_g f)^\perp,
	\]
	for $V, W$ smooth vector fields tangent to $\Sigma.$ It now readily follows that 
	\begin{equation}\label{eq:confscale2}
		\widehat H^\Sigma = e^{-2f} H^\Sigma - me^{-2f} (grad_g f)^\perp,
	\end{equation}
	and the scalar product of $H^\Sigma$ transforms as
	\begin{equation}\label{eq:MCVconformalscale}
		\begin{aligned}
			\widehat g(\widehat H^ \Sigma, \widehat H^ \Sigma) = {}& e^ {-2f}g(H^\Sigma, H^\Sigma) - 2e^{-2f}m g(H^\Sigma, grad_g f)\\
			& + e^{-2f}m^2g( (grad_g f)^\perp, (grad_g f)^\perp).
		\end{aligned}
	\end{equation}
	
	We are now ready to prove the announced result. 
	
	\begin{theorem}\label{teo:FAnowhere}
		$\mathcal{FA}$ is $C^2$-closed in $ \mathscr {ST}(X) \cap \mathscr{S}_\Sigma \cap \mathcal {SC},$ with $\mathcal{FA} = \overline{\mathcal A},$ and we also have $int (\mathcal{FA}) = \mathcal A.$ In particular, by lemma \ref{topologicallemma} ${\mathcal A}$ is prevalent in $\mathcal {FA}.$
	\end{theorem}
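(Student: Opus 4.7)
The proof splits into three pieces: (i) $\mathcal{FA}$ is $C^2$-closed in the ambient open set $\mathscr{ST}(X)\cap\mathscr{S}_\Sigma\cap\mathcal{SC}$; (ii) $\mathrm{int}(\mathcal{FA}) \subseteq \mathcal{A}$; and (iii) $\mathcal{FA} \subseteq \overline{\mathcal{A}}$. Part (i) is routine: by \eqref{eq:MCVcontinuity} the assignment $g \mapsto H^\Sigma(g)$ is $C^1$-continuous (hence $C^2$-continuous) as a map to $\Gamma^0(TM|_\Sigma)$, so $g \mapsto g(H^\Sigma,H^\Sigma)(p)$ and $g \mapsto g(H^\Sigma,X)(p)$ are $C^2$-continuous for each $p\in\Sigma$, and the non-strict inequalities defining $\mathcal{FA}$ pass to $C^2$-limits. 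The inclusion $\mathcal{A}\subseteq\mathrm{int}(\mathcal{FA})$ is immediate from the strict inequalities plus the same continuity.

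The substantive work in (ii) and (iii) I handle jointly via a one-parameter family of conformal deformations $\widehat{g}_\epsilon := e^{2\epsilon\psi}g$. The geometric input driving the construction is that since $X$ is future-directed $g$-timelike and $\Sigma$ is $g$-spacelike, the $g$-orthogonal projection $X^\perp$ of $X|_\Sigma$ onto the normal bundle of $\Sigma$ is pointwise future-directed timelike on $\Sigma$; this is the standard Lorentzian orthogonal splitting along a spacelike submanifold. I then construct $\psi\in C^\infty(M)$ with $\psi|_\Sigma\equiv 0$ and $(\mathrm{grad}_g\psi)^\perp|_\Sigma = X^\perp$, using a tubular neighborhood of $\Sigma$ together with a bump function. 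When $\Sigma$ is non-compact this is localized chart-by-chart and the local amplitudes are rescaled against the data of a prescribed $C^2$-Whitney neighborhood $\mathcal U$ of $g$, so that $\widehat{g}_\epsilon\in\mathcal U$ for all sufficiently small $|\epsilon|$.

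Substitution into \eqref{eq:confscale2}--\eqref{eq:MCVconformalscale} gives, along $\Sigma$,
\[
\widehat{H}^\Sigma = e^{-2\epsilon\psi}\bigl(H^\Sigma - m\epsilon\,(\mathrm{grad}_g\psi)^\perp\bigr),
\]
\[
\widehat{g}_\epsilon(\widehat{H}^\Sigma,\widehat{H}^\Sigma) = e^{-2\epsilon\psi}\bigl[g(H^\Sigma,H^\Sigma) - 2m\epsilon\,g(H^\Sigma,\mathrm{grad}_g\psi) + m^2\epsilon^2\,g((\mathrm{grad}_g\psi)^\perp,(\mathrm{grad}_g\psi)^\perp)\bigr].
\]
For $\epsilon > 0$ small I verify strict negativity pointwise on $\Sigma$ by a case split: where $H^\Sigma$ is past-timelike the $O(1)$ summand dominates; where $H^\Sigma$ is past-null the $O(\epsilon)$ summand is strictly negative because $g(H^\Sigma,X^\perp) = g(H^\Sigma,X) > 0$ (a past-null vector paired with a future-timelike vector); and where $H^\Sigma = 0$ the $O(\epsilon^2)$ summand is strictly negative because $(\mathrm{grad}_g\psi)^\perp = X^\perp$ is $g$-timelike. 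An analogous check yields $\widehat{g}_\epsilon(\widehat{H}^\Sigma,X) > 0$, so $\widehat{g}_\epsilon\in\mathcal A$, proving (iii). For (ii), fix $g\in\mathcal{FA}\setminus\mathcal A$ and a point $p_0\in\Sigma$ witnessing the failure; since any nonzero past-causal vector pairs with the future-timelike $X$ to give a strictly positive number, the failure $g(H^\Sigma,X)(p_0)=0$ forces $H^\Sigma(p_0)=0$, so in every case $H^\Sigma(p_0)$ is either past-null (nonzero) or zero. Taking $\epsilon<0$ small in the same family flips the sign of the relevant correction at $p_0$: either $\widehat{g}_\epsilon(\widehat{H}^\Sigma,\widehat{H}^\Sigma)(p_0) > 0$ (so $\widehat{H}^\Sigma(p_0)$ is spacelike), or $\widehat{H}^\Sigma(p_0) = -m\epsilon\,(\mathrm{grad}_g\psi)^\perp(p_0)$ is future-directed timelike and hence $\widehat{g}_\epsilon(\widehat{H}^\Sigma,X)(p_0)<0$. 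Either way $\widehat{g}_\epsilon\notin\mathcal{FA}$.

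Combining (i)--(iii) yields $\overline{\mathcal A} = \mathcal{FA}$ and $\mathrm{int}(\mathcal{FA}) = \mathcal A$; the prevalence conclusion is then exactly lemma \ref{topologicallemma} applied with $C=\mathcal{FA}$ and $U=\mathcal A$. The principal technical obstacle I anticipate is the global construction of $\psi$: arranging that $(\mathrm{grad}_g\psi)^\perp = X^\perp$ on all of $\Sigma$ while keeping the resulting conformal deformation $C^2$-small is immediate when $\Sigma$ is closed (tubular neighborhood plus a bump), but requires careful partition-of-unity bookkeeping and locally varying scales against the Whitney data when $\Sigma$ is non-compact.
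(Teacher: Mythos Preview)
Your argument is correct and follows the same overall conformal-perturbation strategy as the paper, but with two genuine variations worth flagging. First, for the density step $\mathcal{FA}\subseteq\overline{\mathcal A}$ the paper exploits the stable causality hypothesis directly by taking $\psi$ to be (a bump-function cutoff of) a smooth temporal function $t$, so that $\mathrm{grad}_g\psi$ is future-directed timelike on $\Sigma$ automatically; you instead build $\psi$ from the given vector field $X$ via a tubular-neighborhood construction with $(\mathrm{grad}_g\psi)^\perp|_\Sigma=X^\perp$. Both choices yield a future-timelike normal gradient on $\Sigma$, which is all the sign analysis needs; your route has the pleasant feature that the same one-parameter family $e^{2\epsilon\psi}g$ handles both $\overline{\mathcal A}\supseteq\mathcal{FA}$ (take $\epsilon>0$) and $\mathrm{int}(\mathcal{FA})\subseteq\mathcal A$ (take $\epsilon<0$ at a witness point), whereas the paper treats the interior statement with a separate, purely local perturbation at $p_0$ whose gradient is chosen past-null (when $H^\Sigma_{p_0}$ is null) or spacelike (when $H^\Sigma_{p_0}=0$). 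Second, note that in your sign analysis for $\epsilon>0$ no smallness or ``domination'' is actually needed: all three terms in \eqref{eq:MCVconformalscale} are nonpositive and the $\epsilon^2$ term is strictly negative at every point, so the strict inequality holds for every $\epsilon>0$; only the $\epsilon<0$ argument at $p_0$ genuinely uses small $|\epsilon|$. Finally, the paper's proof tacitly uses compactness of $\Sigma$ (to cut off the temporal function), so your remark about partition-of-unity bookkeeping in the noncompact case is a fair caveat but is not needed to match the paper's level of generality.
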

	\begin{proof}
		First, we show that $\mathcal{FA}$ is $C^2$-closed, thus concluding that $\overline{\mathcal A} \subseteq \mathcal {FA}.$ Consider a net of metrics $\{g_\lambda\}_{\lambda \in \Lambda}$ in $\mathcal {FA}$ converging to a metric $g \in  \mathscr {ST}(X) \cap \mathscr{S}_\Sigma \cap \mathcal {SC}$ in the $C^2$ topology.  Denoting by $H_\lambda ^\Sigma$  the mean curvature vector of $\Sigma$ associated with the metric $g_\lambda,$ by the continuity in (\ref{eq:MCVcontinuity}) we have  $H^\Sigma_\lambda \to H^\Sigma$ in the $C^0$ topology on $\Gamma(TM|_\Sigma)$, and since $C^2$ convergence implies $C^0$ convergence, we have a pointwise convergence $g_\lambda(H^\Sigma_\lambda, H^\Sigma_\lambda) \to g(H^\Sigma, H^\Sigma),$ which implies $ g(H^\Sigma, H^\Sigma) \leq 0.$ 		
		Similarly, we have a pointwise convergence $g_\lambda(X, H^\Sigma_\lambda) \to g(X, H^\Sigma),$ thus showing  that $g(X, H^\Sigma) \geq 0,$ i.e., $g \in \mathcal {FA},$ which is therefore closed as claimed.
		
		In order to show that $\mathcal {FA} \subseteq \overline{\mathcal{A}},$ 	let $g \in \mathcal{FA}$; if we can find a sequence $g_n \in \mathcal A$ of metrics converging to $g$ in the $C^2$ topology to $g$ such that 
		\begin{equation}\label{eq:FA=Aset}
			g_n(H_n, H_n) < 0 \quad \text{and} \quad g_n(H_n, X) > 0,
		\end{equation}
		then we may conclude that $g \in \overline{\mathcal A}.$ 
		
		We find such a sequence as follows. Since $(M,g)$ is stably causal, there exists a $C^2$ $g$-temporal function $t : M \to \R$ \cite{bernal2005smoothness} with $grad_g\, t$ future-directed timelike with respect to $g$. Since $\Sigma$ is compact, we can find precompact open neighborhoods $V$ and $U$ of $\Sigma $ in $M$ such that $\overline{V} \subseteq U.$ Consider a smooth bump function $\phi$ that is constant to 1 in $\overline V$ and with support in $U.$ By defining $f = \phi t,$ then $f$ is zero outside of a compact set, and has the same $g$-gradient as $t$ for points of $\Sigma,$ so we define $f_n = f/n$ and $g_n = e^{2f_n}g.$ Since $f_n\to 0$ in the $C^2$ topology      {(see e.g. \cite[p.43-44]{golubitsky_stable_1973}}),  we see that $e^{2f_n}$ converges to 1 in the $C^2$ topology, and then also  $g_n \to g$ in the $C^2$ topology.
		
		To see that $g_n$ indeed satisfies (\ref{eq:FA=Aset}), we make a few elementary computations. By \eqref{eq:MCVconformalscale},
		\[
		\begin{aligned}
			g_n(H_n, H_n) ={}&{} e^{-2f_n}g(H^\Sigma(g), H^\Sigma(g)) - \frac{2me^{-2f_n}}{n}g(H^\Sigma(g),grad_g\, t)\\
			& +  \frac{m^2e^{-2f_n}}{n^2}g((grad_g\, t)^\perp,(grad_g\, t)^\perp).
		\end{aligned}
		\]
		Since $grad_g\, t$ future-directed and $H^\Sigma(g)$ is either zero or past-directed (with respect to $g$), one has $g(H^\Sigma(g),grad_g\, t) \geq 0,$ and $g(H^\Sigma(g), H^\Sigma(g))\leq 0,$ but also the strict inequality $g((grad_g\, t)^\perp,(grad_g\, t)^\perp) < 0$, and thus $	g_n(H_n, H_n) < 0.$ Similarly, by \cref{eq:confscale2},
		\[
		g_n(H_n,X) = e^{-2f_n}g(H^\Sigma(g),X) -  \frac{me^{-2f_n}}{n}g((grad_g\, t)^\perp, X),
		\]and since $g(H^\Sigma(g),X) \geq 0$ and $g((grad_g\, t)^\perp, X) < 0$ because $grad_g\, t$ is also future-directed, we have obtained the desired sequence.	
		
		$\mathcal {A} \subseteq \mathcal {FA}$ is $C^2$-open in $\Gamma^2(L)$, so we immediately have $\mathcal A \subseteq int({\mathcal{F A}}).$ Consider now $ g \in \mathcal {FA} \setminus \mathcal A.$ We shall exhibit a sequence $g_n \notin \mathcal{ F A}$ with $g_n \to g$ in the $C^2$ topology, so that every $C^2$-open neighborhood of $g$ will have a metric not in $\mathcal {FA},$ meaning $g \notin int({\mathcal{FA}}),$ whence we conclude that $\mathcal {FA} \subseteq \overline{\mathcal {A}}$.   
		
		Since $g$ is not a metric in $\mathcal {A},$ there exists a point $p \in M$ for which either $g(H^\Sigma_p, H^\Sigma_p) \geq 0,$ or else $g(H^\Sigma_p, X_p) \leq 0.$ But  $g \in \mathcal {FA},$ so either $H_p^\Sigma(g)$ is lightlike, or otherwise $H^\Sigma_p(g) = 0.$
		
		For the lightlike case, choose a past-directed lightlike vector $v \in T_p \Sigma ^\perp$ not collinear to $H^\Sigma_p$ (which exists because $k=codim \, \Sigma \geq 2$). We easily find some relatively compact neighborhood $U$ of $p$ in $M$ and a smooth function $\phi \in C^\infty(U)$ such that $grad_g \phi|_p = v.$ We then extend $\phi$ globally to a smooth real-valued function $M$ with support in $U$ by a standard bump function argument, and define the sequence $\phi_n = \phi/n$.  This sequence converges to the zero function in the $C^2$ topology, so that $e^{2\phi_n}$ converges to the constant function 1 in the $C^2$ topology, and thus $g_n = e^{2\phi_n}g$ converges to $g$ in the $C^2$ topology on $\Gamma^2(L)$. Denoting by $H^n$ the mean curvature vector of $\Sigma$ associated with the metric  $g_n,$ we can now use (\ref{eq:MCVconformalscale}). Since $H^\Sigma_p(g)$ is (past-directed) $g$-lightlike, and $grad_g \phi|_p = v$ is also past-directed lightlike and $g$-normal to $\Sigma,$ we obtain
		\[
		g_n(H^n_p, H^n_p) = -\frac{2m}{n}e^{2\phi_n(p)} g(H^\Sigma_p(g), v) > 0.
		\]
		
		The case $H_p^\Sigma = 0$ is very similar: we just choose $v \neq 0$ $g$-spacelike in $T_p\Sigma ^\perp$, and the rest of the argument proceeds analogously, but now applying  (\ref{eq:MCVconformalscale}) we get
		\[
		g_n(H^n_p, H^n_p)  = \frac{m^2}{n}e^{-2\phi_n(p)}g(v,v) > 0.
		\]
		
		In any case, we obtain $g_n \notin \mathcal{FA},$ as desired.
	\end{proof}

	\begin{remark}\label{rmk2}
		{\em Some important remarks are appropriate here. 
			\begin{enumerate}
				\item By time-duality, we obviously have analogous results for {\it past-trapped} submanifolds, with an analogous notion of {\it weakly past-trapped} submanifolds.
				\item Consider now metrics in $\Gamma^\infty(L)$, that is, smooth Lorentz metrics on $M$. Then $\mathcal A$ is still $C^\infty$-open, $\mathcal {FA}$ is $C^\infty$-closed, and $\overline{\mathcal A} = \mathcal{FA}$ ($C^\infty$ closure). To see this, note that the proof of  \cref{teo:FAnowhere}, because $\phi$ had compact support actually the sequence $\phi_n$ of smooth functions converges in any $C^r$ topology ($r$ finite) to $0,$ and thus $g_n$ converges to $g$ in any $C^r$ topology. Therefore $g_n \to g$ in the $C^\infty$ topology, and \cref{teo:FAnowhere} can be restated for the $C^\infty$ topology. The exact same kind of proof inspection in \textcite{lerner_space_1973} gives an analogous ``$C^\infty$ statement''  for \cref{teo:riccinowwhere} provided  one considers only smooth metrics. 
			\end{enumerate}
		}
	\end{remark}
	
	In \textcite{lerner_space_1973} the question was raised that perhaps $\overline{\mathcal{SE}} = \mathcal E$, but the issue was left open in that reference. To the best of our knowledge, this question does not seem to have been addressed in the literature ever since. However, using later results on $C^1$-stability of causal geodesic completeness in globally hyperbolic spacetimes (\textcite[cor. 7.37]{BeemGLG}), we are now able to give a very simple counterexample showing one may indeed have $\mathcal E\setminus \overline{\mathcal{SE}}\neq \emptyset$. 
	
	\begin{example}\label{rmk1.1} 
		{\em In the usual Minkowski spacetime $\R^{m+1}_1$ consider the quotient manifold  $M = \R^{m+1} / \Z ^{m}$ with induced metric $g$,  where the $ \Z ^{m}$ isometric action defined by $(t, x^1, \dotsc, x^{m}) \sim  (t, x^1 + n_1, \dotsc, x^{m} + n_{m}).$  Thus, $(M,g)$ is a flat ($g \in \mathcal E$), globally hyperbolic geodesically complete spacetime. The spacelike Cauchy hypersurface $\Pi = \{t = 0\}$ in  $\R^{m+1}$ is an $m$-torus and thus compact. Since global hyperbolicity, causal geodesic completeness and the spacelike character of $\Pi$ are all $C^1$-stable via the cited result, there is a open $C^2$ neighborhood $\mathcal U$ of $g$ such that all metrics in $\mathcal U$ satisfy these properties. Now, if we had $g \in \overline{\mathcal{SE}},$ then there would exist some $h \in\mathcal{SE} \cap \mathcal U.$ However, in that case the spacetime $(M,h)$ would satisfy all the conditions in the well-known Hawking-Penrose singularity theorem (conf. \textcite[Thm. 12.47]{BeemGLG}), while being causally geodesically complete, a contradiction. Therefore $g \in  \mathcal E\setminus \overline{\mathcal{SE}}.$
		}
	\end{example}	
	

	\subsection{Singularities in codimension two}\label{sect:sing1}
	
	We can now combine these results with the ``energy'' conditions for the Ricci tensor (\cref{teo:riccinowwhere}). As pointed out in \cref{rmk2}(ii), the results in \cref{teo:riccinowwhere} and  \cref{teo:FAnowhere} can be stated for the $C^\infty$ topology. Because the latter assumption is the  most common one in geometry, the following discussion will be carried out for smooth metrics, while easily restated for $C^r$ metrics with the $C^r$ topology with $r\geq 2$.
	
	Recall that the set $\mathcal A$ is $C^\infty$-open in $\Gamma^\infty(L)$, and so is $\mathcal{SE}$. Therefore, $\mathcal M := 
	\mathcal A \cap \mathcal {SE}$ is also $C^\infty$-open in $\Gamma^\infty(L)$, and contained in $\mathscr {ST} (X)\cap \mathscr{S}_\Sigma \cap \mathcal{SC}.$  
	
	Let $\mathcal {FM} := \mathcal{FA} \cap \mathcal E$ be the set of smooth spacetime metrics which are both weakly trapped and satisfy the timelike convergence conditions. This set is $C^\infty$-closed in $\mathscr {ST} (X)\cap \mathscr{S}_\Sigma \cap \mathcal{SC} $; thus $\overline{\mathcal M} \subseteq \mathcal {FM},$ and also 
	\[
	int(\mathcal{FM}) = int(\mathcal{FA}) \cap int(\mathcal{E}) =\mathcal M,
	\] 
	therefore $\mathcal{M}$  is prevalent in $\mathcal{FM}.$

		\begin{remark}\label{rmknice}
			{\em Just as in the discussion showing that $\mathcal{E}\setminus \overline{\mathcal{SE}}$ is not necessarily empty given in example \cref{rmk1.1}, the same spacetime given therein works to show that $\mathcal{FM}\setminus \overline{\mathcal{M}}$ may also not be empty: just consider in addition the spacelike surface originating from the quotient of $S = \{t = x^1 = 0\}$. Therefore $g \in \mathcal{FM},$ but the possibility that $g \in \overline{\mathcal M}$ leads to an analogous contradiction.}
		\end{remark}

%
%
%

		Our considerations in these examples can be summarized as follows.
	
	\begin{proposition}\label{prop:conterexprops}
		Both $\mathcal E$ and $\mathcal{FM}$ are Baire spaces in the $C^2$ topology. Moreover
		\begin{enumerate}
			\item[(i)]  $\mathcal{SE}$ is prevalent in $\mathcal E,$ but in general it is not residual in $\mathcal E$ (in the subspace topology); 	
			\item[(ii)]	Analogously, $\mathcal{M}$ is prevalent in $\mathcal {FM},$ but in general it is not residual in $\mathcal{FM}.$
				\end{enumerate}
	\end{proposition}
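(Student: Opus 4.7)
My plan is to split the argument into three parts, corresponding to the three assertions of the proposition. First I would establish the Baire property for $\mathcal{E}$ and $\mathcal{FM}$ in the induced $C^2$ topology, which is the principal obstacle. Second, the prevalence claims in (i) and (ii) are essentially already in hand from the material preceding the proposition. Third, the failure of residualness will follow by a short contradiction argument that combines the Baire property with the counterexamples to density furnished by \cref{rmk1.1} and \cref{rmknice}.

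For the Baire property: the ambient space $\Gamma^r(L)$ endowed with the strong $C^2$ Whitney topology is known to be a Baire space, a classical fact for strong Whitney topologies (see, e.g., \cite{hirsch_differential_1976}). The set $\mathcal{E}$ is $C^2$-closed in $\Gamma^r(L)$ by the discussion recalled in \cref{subsecbasic}, and $\mathcal{FM}=\mathcal{FA}\cap\mathcal{E}$ is a finite intersection of $C^2$-closed sets, hence also $C^2$-closed. The delicate point is that closed subsets of a general non-metrizable Baire space need not be Baire; here I would appeal to the particular structure of the strong Whitney topology, which is locally modelled on open subsets of Fréchet-complete spaces of sections over a compact exhaustion of $M$. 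A standard paracompactness/partition-of-unity argument then allows one to patch local Baire category arguments together into a global one, showing that closed subsets of $\Gamma^r(L)$ inherit Baireness. I view this Baire-inheritance as the main obstacle of the proof.

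The prevalence portion is then immediate. For (i), $\mathcal{SE}$ is prevalent in $\mathcal{E}$ directly by \cref{teo:riccinowwhere}. For (ii), the paragraph preceding the proposition already establishes $int(\mathcal{FM})=\mathcal{M}$, so \cref{topologicallemma} delivers that $\mathcal{M}$ is prevalent in $\mathcal{FM}$.

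Finally, I would prove non-residualness by contradiction. In any Baire space every residual subset is dense. If $\mathcal{SE}$ were residual in $\mathcal{E}$ with the subspace topology, density would force $\mathcal{E}\subseteq\overline{\mathcal{SE}}$. But \cref{rmk1.1} exhibits a metric $g\in\mathcal{E}$ admitting a $C^2$-open neighborhood in $\Gamma^r(L)$ that is disjoint from $\mathcal{SE}$, so $g\notin\overline{\mathcal{SE}}$; intersecting this neighborhood with $\mathcal{E}$ yields a nonempty subspace-open set in $\mathcal{E}$ missing $\mathcal{SE}$, contradicting density. The argument for (ii) is entirely parallel, with \cref{rmknice} replacing \cref{rmk1.1}.
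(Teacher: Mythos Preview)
Your treatment of the prevalence claims and of the non-residualness contradiction is correct and is exactly the paper's argument: prevalence comes from \cref{teo:riccinowwhere} and from $int(\mathcal{FM})=\mathcal M$ together with \cref{topologicallemma}, while non-residualness follows because a residual set in a Baire space must be dense, which is blocked by \cref{rmk1.1} and \cref{rmknice}.

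The gap is in your argument for the Baire property. You correctly note that closed subsets of a non-metrizable Baire space need not be Baire, but your proposed workaround is not sound: the strong $C^2$ Whitney topology on $\Gamma^r(L)$ for noncompact $M$ is \emph{not} locally modelled on Fr\'echet-complete spaces (it is not even first countable), and a paracompactness/partition-of-unity patching of ``local Baire category arguments'' does not yield that an arbitrary strongly closed subset is Baire. No such general inheritance principle is available here. What actually makes $\mathcal E$ and $\mathcal{FM}$ Baire is a feature you have not used: they are defined by \emph{pointwise} closed inequalities on the $2$-jet of the metric (namely the sign conditions on $Ric(g)(v,v)$, on $g(H^\Sigma,H^\Sigma)$ and on $g(H^\Sigma,X)$), so they are closed not just in the strong $C^2$ topology but already in the \emph{weak} (compact--open) $C^2$ topology. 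For weakly closed subsets one has the standard result (Hirsch, \emph{Differential Topology}, Ch.~2, Thms.~4.2--4.4) that they are Baire in the induced strong topology. This is precisely the mechanism the paper invokes; replacing your sketched inheritance argument by the observation ``$\mathcal E$ and $\mathcal{FM}$ are weakly $C^2$-closed, hence Baire in the strong topology by Hirsch'' closes the gap.
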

	\begin{proof}
		The fact that  $\mathcal E$ and $\mathcal{FM}$ are Baire spaces follows from their functional forms as pointwise closed inequalities: these are preserved by pointwise convergence of 2-jets of metrics, and standard properties of weakly closed sets etablish they are indeed Baire in the strong topology (conf. \cite{hirsch_differential_1976}, Thms. 4.2-4.4).
		
		For item (i) (the case (ii) is entirely analogous), we saw above that $\mathcal{SE}$ is prevalent in $\mathcal E$  (\cref{teo:riccinowwhere}) but by \cref{rmk1.1}, we can indeed have $\mathcal E \neq \overline{\mathcal{SE}}$ in general. But in this case, since $\mathcal E$ is a Baire space, $\mathcal{SE}$ cannot be residual, for otherwise it would also be dense; however, $\overline{\mathcal{SE}} = \overline{\mathcal{SE}} \vphantom{E}^\mathcal E \neq \mathcal E.$

	\end{proof}
	
	Now, consider the following key observation. Assume that $(i)$ $g\in \mathcal{M}$ and $(ii)$ $\Sigma$ is a closed submanifold of codimension $k=2$ (following the physics usage we simply say that $\Sigma$ is a \emph{closed surface} in this case). Then, since in particular we will have that $(M,g)$ is chronological and $Ric(g)(v,v)>0$ for any $g$-causal vector, the generic condition for causal vectors will be trivially satisfied (conf. \textcite[Prop. 2.12, p. 39]{BeemGLG}). But then all the hypotheses in the Hawking-Penrose singularity theorem \cite{HawkPen} hold and therefore $(M,g)$ must possess at least one incomplete causal geodesic\footnote{For this particular result, if $(M,g)$ is chronological it is enough that $g\in \mathcal{FA}\cap \mathcal{SE}$ (\textcite{Silva_MOTS_2012}).}.
	
	Now, since all the metrics we consider are restricted to the set $\mathscr {ST} (X)\cap \mathscr{S}_\Sigma \cap \mathcal{SC}$ (so they are in particular chronological), for every $g \in \mathcal M,$ the spacetime $(M,g)$ is causally geodesically incomplete. Since this set is prevalent in $\mathcal {FM}$, we conclude that ``nearly all'' metrics in $\mathcal {FM}$ are arbitrarily $C^\infty$-close to causally incomplete metrics. More precisely, we summarize the discussion of this subsection in the following theorem.
	
	\begin{theorem}\label{mainthm1}
		Let $(M,g)$ be a spacetime of dimension $\geq 3$ and smooth metric containing a spacelike closed surface $\Sigma$. Assume that the following conditions hold. 
		\begin{enumerate}
			\item $\Sigma$ is weakly future-trapped;
			\item $Ric(g)(v,v) \geq 0$ for all $g$-timelike $v\in TM$;
			\item $(M,g)$ is stably causal. 
		\end{enumerate} 
		There exists a prevalent set $\chi$ of $\mathcal{FM}$ such that if $g\in \chi,$ then there exists a net $(g_\lambda)_{\lambda \in \Lambda}$ of smooth metrics on $M$ such that $g_\lambda \rightarrow g$ in the $C^\infty$ topology and such that for each $\lambda \in \Lambda$, the spacetime $(M,g_\lambda)$ satisfies the following conditions:
		\begin{enumerate}
			\item[(i)] $\Sigma$ is future-trapped;
			\item[(ii)] $Ric(g_\lambda)(v,v) > 0$ for all $g_\lambda$-causal $v \in TM$;
			\item[(iii)] $(M,g_\lambda)$ is stably causal.
		\end{enumerate}
		In particular, $(M,g_\lambda)$ has at least one incomplete causal geodesic.  \hfill $\ensuremath{\Box}$
	\end{theorem}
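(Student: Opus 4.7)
The plan is to take $\chi := \overline{\mathcal{M}}$, the $C^\infty$-closure of $\mathcal{M}=\mathcal{A}\cap\mathcal{SE}$ inside $\mathcal{FM}$. The discussion immediately preceding the theorem established, by combining \cref{teo:FAnowhere} and \cref{teo:riccinowwhere}, that $\mathcal{M}$ is a $C^\infty$-open prevalent subset of the $C^\infty$-closed set $\mathcal{FM}$, and we already observed that any superset of a prevalent subset of $\mathcal{FM}$ is itself prevalent therein; so $\chi$ is prevalent in $\mathcal{FM}$. This choice of $\chi$ makes the first part of the conclusion essentially tautological: every $g \in \chi$ is, by the very definition of closure, the limit of some net of metrics in $\mathcal{M}$.

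Given $g\in\chi$, I would produce the net $(g_\lambda)_{\lambda\in\Lambda}\subset\mathcal{M}$ with $g_\lambda\to g$ in $C^\infty$ directly from the definition of topological closure; nets (rather than sequences) are needed here because the strong $C^\infty$ Whitney topology on a noncompact $M$ is not first-countable. Since $\mathcal{M}\subseteq \mathcal{A}\cap\mathcal{SE}\cap\mathcal{SC}$, the three conditions on the approximants are immediate: $g_\lambda\in\mathcal{A}$ gives (i) that $\Sigma$ is future-trapped in $(M,g_\lambda)$; $g_\lambda\in\mathcal{SE}$ gives (ii) the strict Ricci positivity $Ric(g_\lambda)(v,v)>0$ on $g_\lambda$-causal vectors; and $g_\lambda\in\mathcal{SC}$ gives (iii) stable causality.

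For the final ``incomplete causal geodesic'' clause I would invoke the Hawking--Penrose singularity theorem \cite[Thm.~12.47]{BeemGLG}. Stable causality implies chronology. The strict inequality $Ric(g_\lambda)(v,v)>0$ on every $g_\lambda$-causal $v$ simultaneously delivers the timelike/null convergence condition and renders every causal vector generic, via \cite[Prop.~2.12]{BeemGLG}. Finally, the hypothesis that $\Sigma$ is closed (compact, boundaryless), spacelike and of codimension two, together with $g_\lambda\in\mathcal{A}$, makes $\Sigma$ a closed future-trapped surface in $(M,g_\lambda)$, supplying the last ingredient in the Hawking--Penrose hypothesis list. The singularity theorem then produces an incomplete causal geodesic in each $(M,g_\lambda)$.

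There is no real ``main obstacle'' at this stage: the genuine technical content has already been absorbed into \cref{teo:FAnowhere} and \cref{teo:riccinowwhere}, and the present argument is a bookkeeping step that combines those prevalence statements with a standard application of the Hawking--Penrose theorem. The only point that deserves explicit mention in the write-up is the passage from sequences to nets forced by non-first-countability of the $C^\infty$ Whitney topology, which is harmless but necessary to justify using the word ``net'' in the statement.
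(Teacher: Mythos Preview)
Your proposal is correct and matches the paper's own argument: the theorem is stated with the $\Box$ immediately after it precisely because its proof is the preceding discussion, which establishes that $\mathcal{M}=\mathcal{A}\cap\mathcal{SE}$ is prevalent in $\mathcal{FM}$ and that every metric in $\mathcal{M}$ is causally geodesically incomplete via Hawking--Penrose. Your only addition is making explicit the choice $\chi=\overline{\mathcal{M}}$ (the paper leaves $\chi$ implicit, effectively taking $\chi=\mathcal{M}$ with the constant net, or equivalently $\overline{\mathcal{M}}$), which is harmless and arguably cleaner.
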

	

	\section{Weakly trapped submanifolds and prevalence of singularities II: higher codimensions}\label{sectmain2}
	
	Our goal now is to establish a version of Theorem \ref{mainthm1} that is valid for all higher codimensions $k\geq 2$ of $\Sigma^m\subset M.$ Since singularity theorems in the presence of higher co-dimensional submanifolds are more delicate, we must introduce some further notation and results as to have a better control of the Riemannian curvature as well. 
	
	For a Lorentz metric $g\in \Gamma^2(L)$ on $M$ denote by $Riem(g)$ the covariant Riemann curvature $(0,4)$-tensor associated with $g.$ Applying a jet bundle argument completely analogous to the one used to prove continuity of the Ricci tensor $Ric$ as a function of $g$ (conf. \textcite[23-24]{lerner_space_1973}),  one readily sees that the function 
	\[
	Riem: g \in \Gamma^r(L) \mapsto Riem(g) \in \Gamma^0(T^{(0,4)}M)
	\]  
	is continuous with respect to the $C^r$-topology on $\Gamma^r(L)$ for each $2\leq r\leq \infty$, where now $T^{(0,4)}M$ denotes the smooth vector bundle of $(0,4)$-tensors over $M$. Consider in $\Gamma^2(L)$ the subset
	\[
	\mathcal P = \{g \in \Gamma^2(L) : Riem(g)(w,v,v,w) >0 \text{ $\forall v \in TM$ $g$-causal and all $w$ non-collinear to $v$}\}.
	\]
	
	\begin{proposition}\label{mainprop2}
		The set $\mathcal P$ is $C^2$-open in $\Gamma^2(L)$. 
	\end{proposition}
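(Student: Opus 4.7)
The plan is to exploit the jet-bundle description of the strong $C^2$ Whitney topology recalled in the preliminaries: a basis is given by sets of the form $B_2(\mathcal{O}) = \{g \in \Gamma^2(L) : j^2 g(M) \subseteq \mathcal{O}\}$ for $\mathcal{O}$ open in $J^2(M, L)$. Since $Riem(g)_p$ depends smoothly on the 2-jet $j^2_p g$, and the metric value $g_p$ (which determines the $g$-causal cone in $T_pM$) depends even on the $0$-jet, the defining condition of $\mathcal{P}$ is genuinely \emph{pointwise} at the level of 2-jets. I will therefore define $\mathcal{O}_0 \subseteq J^2(M, L)$ as the set of 2-jets $\sigma$ over some point $p$ for which the associated metric value $g_\sigma$ and Riemann tensor $Riem(\sigma)$ satisfy $Riem(\sigma)(w, v, v, w) > 0$ for every $g_\sigma$-causal $v \in T_pM$ and every $w \in T_pM$ non-collinear to $v$. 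Then by construction $\mathcal{P} = B_2(\mathcal{O}_0)$, and the whole proof reduces to showing that $\mathcal{O}_0$ is open in $J^2(M, L)$.

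To deal with the fact that causal cones vary with the metric, I would fix an auxiliary smooth Riemannian metric $h$ on $M$, used only as a normalization device. The standard symmetries of the Riemann tensor yield $Riem(\sigma)(w + \alpha v, v, v, w + \alpha v) = Riem(\sigma)(w, v, v, w)$ for all $\alpha \in \R$, while separately scaling $v$ and $w$ only multiplies $Riem(\sigma)(w, v, v, w)$ by a positive factor. Consequently I may restrict the quantifiers defining $\mathcal{O}_0$ to pairs $(v, w)$ with $h(v, v) = h(w, w) = 1$ and $h(v, w) = 0$, which cut out fiberwise over each $p$ a compact set $K_p \subseteq T_pM \oplus T_pM$ \emph{independent of $\sigma$}. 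This decoupling of the test set from the metric being tested is the key technical device.

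Openness of $\mathcal{O}_0$ is then established by a sequential-contradiction argument in a local trivialization of $J^2(M, L)$. Suppose $\sigma_0 \in \mathcal{O}_0$ lies over $p_0$ and $\sigma_n \to \sigma_0$ with $\sigma_n \notin \mathcal{O}_0$; then the base points satisfy $p_n \to p_0$, and for each $n$ one can select $(v_n, w_n) \in K_{p_n}$ with $v_n$ being $g_{\sigma_n}$-causal and $Riem(\sigma_n)(w_n, v_n, v_n, w_n) \leq 0$. Compactness of the $h$-orthonormal $2$-frame bundle over a precompact chart around $p_0$ yields, after extraction of a subsequence, a limit $(v, w) \in K_{p_0}$, in particular with $v$ and $w$ non-collinear. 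Continuity of the maps $\sigma \mapsto g_\sigma(v, v)$ and $\sigma \mapsto Riem(\sigma)$ as functions on $J^2(M, L)$ then gives $g_{\sigma_0}(v, v) \leq 0$, so that $v$ is $g_{\sigma_0}$-causal, and $Riem(\sigma_0)(w, v, v, w) \leq 0$, contradicting $\sigma_0 \in \mathcal{O}_0$.

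The principal obstacle I anticipate is precisely this coupling between the metric being perturbed and the class of vectors over which the curvature inequality is tested: a naive attempt to pass to the limit fails, because a vector that is unit causal for $g_n$ need not even be causal, let alone comparably normalized, for $g$. The introduction of $h$ and the $\sigma$-independent test set $K_p$ bypasses this obstruction, after which the remainder is a fairly standard application of continuity of $Riem$ as a function of the 2-jet, very much in the spirit of Lerner's $C^2$-openness proof for $\mathcal{SE}$ underlying \cref{teo:riccinowwhere}. The only genuine novelty is having to range over pairs $(v, w)$ rather than single vectors, which is accommodated by enlarging the auxiliary normalization from a unit sphere bundle to a bundle of $h$-orthonormal $2$-frames.
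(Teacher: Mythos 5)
Your proof is correct, but it is organized rather differently from the paper's, even though both run on the same engine: an auxiliary Riemannian metric $h$ used to normalize test vectors, compactness of the resulting orthonormal frame bundles over precompact sets, and a sequential contradiction. The paper works with continuous sections of the bundle $Curv(M)$ of curvature-like tensors: it first proves that for each such section $C$ the set $V_C$ of vectors $v$ with $C(w,v,v,w)>0$ for all non-collinear $w$ is open in $\widehat{TM}$; then, because the causal cone of a perturbed metric may be strictly larger than that of $g$, it interpolates (by normality) a scale-invariant open set $U$ with $\mathcal{C}_g\subseteq U\subseteq\overline{U}\subseteq V_{Riem(g)}$, proves a second compactness statement producing fiberwise neighborhoods $W$ of $Riem(g)(p)$ in $Curv(M)$, assembles these into a $C^0$-neighborhood of $Riem(g)$ in $\Gamma^0(Curv(M))$, and pulls it back through the continuous map $Riem$. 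Your version collapses this two-step structure into a single openness statement in the $2$-jet bundle: by writing $\mathcal{P}=B_2(\mathcal{O}_0)$ and carrying the causality requirement $g_{\sigma_n}(v_n,v_n)\leq 0$ along the contradiction sequence, the closedness of the causality condition (together with $v\neq 0$ because $v$ is $h$-unit) does exactly the job that the intermediate set $U$ does in the paper, and only one compactness/extraction is needed. What your route buys is a shorter and more transparent argument that also makes clear why the statement is purely a pointwise condition on $2$-jets; what it requires in exchange is the explicit reduction of the quantifiers to $h$-orthonormal pairs via $Riem(w+\alpha v,v,v,w+\alpha v)=Riem(w,v,v,w)$ and positive homogeneity (which you do state), plus the standard remark that $\mathcal{O}_0$ should be regarded as a subset of the bundle of $2$-jets of \emph{sections} of $L$, consistent with the paper's convention of topologizing $\Gamma^r(L)$ as a subspace of $C^r(M,L)$. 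Both arguments are in the spirit of Lerner's Prop.~4.3; yours is the jet-level rephrasing, the paper's the original bundle-of-curvature-tensors version.
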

	\begin{proof}
		Our arguments adapt some ideas in the proof of \textcite[Prop. 4.3]{lerner_space_1973}, but with a number of modifications of detail. Denote by $Curv(M)$ the vector subbundle of $T^{(0,4)}M$ of all {\it curvature-like tensors} over $M$, i.e., if $F\in Curv(M)$, then for all $p \in M$ and all vectors $w,x,y,z \in T_pM$ we have
		\begin{itemize}
			\item[CL1)]$F(w,z,x,y) = -F(w,z,y,x)$;
			\item[CL2)]$F(w,z,x,y) = -F(z,w,x,y)$;
			\item[CL3)] $
			F(w,z,x,y) + F(w,x,y,z) + F(w,y,z,x) =0$; 
			\item[CL4)]$F(x,y,w,z) = F(w,z,x,y)$. 
		\end{itemize}
		Fix $g \in \mathcal{P}$. Denote by $\widehat {TM}$ the tangent bundle minus all zero vectors, and consider, for $C \in \Gamma^0(Curv(M)),$ the set
		\[
		V_C = \{v \in \widehat{TM} : C(w, v,v,w) >0 \text{ for all $w$ non-collinear to $v$}\}.
		\]  
		We argue that $V_C$ is open in $\widehat{TM}$. Indeed, suppose not. Then there is a sequence $v_n \in \widehat{TM}\setminus V_C$ converging to some vector $v \in V_C$. For each $v_n,$ not being an element of $V_C$ implies that there exists $w_n$ not collinear to $v_n$ and based at the same points respectively such that $C(w_n,v_n,v_n,w_n) \leq 0.$ Now, fix some background Riemannian metric $h$ for $M$. By taking into account the symmetries for $C$ we can assume without loss of generality that all vectors are $h$-unitary, and each $w_n$ is $h$-normal to $v_n.$ By the compactness of the $h$-sphere bundle over compact subsets of $M$ (since $v_n$ is convergent), passing to a subsequence if necessary we can assume that $w_n$ converges to a nonzero $w$ with same base point in $M$ as $v,$ and such that $h(v,w) = 0,$ implying non-collinearity. Thus, on the one hand, $v \in V_C,$ and hence $C(w,v,v,w) > 0,$ and on the other hand the convergence implies $C(w,v,v,w) \leq 0,$ a contradiction.
		
		Denote by $\mathcal C_g \subseteq \widehat{TM}$ the set of all $g$-causal vectors. Now, since $g \in \mathcal P,$ we have $\mathcal C_g \subseteq V_{Riem(g)}$. However, $\mathcal C_g$ is evidently closed in $\widehat{TM}$, therefore, since the latter is a manifold and hence a normal topological space, there exists an open set $U \subseteq \widehat{TM}$ satisfying $\mathcal C_g \subseteq U \subseteq \overline U \subseteq V_{Riem(g)}$. In addition, we can assume without loss of generality that $U$ can be chosen to satisfy $v \in U \implies \alpha v \in U,$ for nonzero $\alpha$ since both sets $\mathcal C_g$ and $V_{Riem(g)}$ have this property. 
		
		Denoting as $\pi : Curv(M) \to M$ the bundle standard projection, we now prove the following statement for this chosen $U$: for each point $p \in M,$ there exists an open set $W \subseteq Curv(M)$ containing $Riem(g)(p)$ with the property that, if for $C \in \Gamma^0(Curv)$ and all $q \in \pi(W)$ we have $C(q) \in W,$ then $\overline U \cap T_q M \subseteq V_C\cap T_q M.$
		Suppose by way of contradiction that this is false. Thus we have a nested sequence of sets $W_n \supseteq W_{n+1},$ all neighborhoods of $Riem(g)(p)$ in $Curv(M)$ with $\bigcap W_n = \{Riem(g)(p)\},$ a sequence $C_n \in \Gamma^0(Curv)$ and points $q_n \in \pi(W_n)$ such that there exists some $v_n \in (\overline U \setminus V_{C_n})\cap T_{q_n}M,$ which means that there is some $w_n\in T_{q_n}M$ not collinear to $v_n$ satisfying $C_n(q_n)(w_n, v_n, v_n, w_n) \leq 0.$ Since $C_n(q_n) \to Riem(g)(p)$ in $Curv(M)$ and $q_n \to p$ in $M$, an argument using a background Riemannian metric similar as the one in the first part of this proof shows that $v_n \to v$ and  $w_n \to w$ up to a subsequence, for some nonzero $v \in \overline U \cap T_p M,$ and $w \in T_p M$ not collinear to $v$. Thus, on the one hand, taking limits we get $Riem(g)(p) (w, v, v, w) \leq 0,$ and on the other hand $v \in \overline{U} \subset V_{Riem(g)},$ so $Riem(g)(p)(w, v, v, w) > 0,$ which is the desired contradiction.
		
		The rest of the proof is quite similar to the final part that of \textcite[Prop. 4.3, p. 28]{lerner_space_1973}. Namely, using the local triviality of the bundle $Curv(M)$ and the neighborhoods $W$ above, we can obtain a $C^0$ neighborhood $\mathcal{W} = \mathcal W(Riem(g)) \subset \Gamma^0(Curv(M))$ of $Riem(g)$ with the property that for any $C \in \mathcal W$ we have $\overline{U}\subset V_C$. Now, by the continuity of the map $Riem$ we have that $\mathcal Z := Riem^{-1}(\mathcal{W})$ is a $C^2$-open set containing $g$ and by construction $\mathcal Z\subset \mathcal{P}$ as desired. 
	\end{proof}
	
	Now, let us define
	
	\[
	\mathcal {FP} = \{g \in \Gamma^2(L) : Riem(g)(w,v,v,w) \geq 0 \text{  $\forall v \in TM$ $g$-causal and all $w$ non-collinear to $v$}\}.
	\]
	If we have a net $(g_\lambda)_{\lambda \in \Lambda}$ in $\mathcal  {FP}$ converging in $C^2$ to $g$, we have that $Riem(g_\lambda) \rightarrow Riem(g)$ in the $C^0$ topology on $\Gamma^0(T^{(0,4)}M)$, whence we conclude that $g\in\mathcal{FP}$; that is, $\mathcal{FP}$ is $C^2$-closed. We now have:
	\begin{theorem}\label{teo:rie}
		In the $C^2$ topology on $\Gamma^2(L)$ we have $int (\mathcal{FP}) = \mathcal P,$ so $\mathcal P$ is prevalent in $\mathcal{FP}.$ 
	\end{theorem}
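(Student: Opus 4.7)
I would prove the equality $int(\mathcal{FP}) = \mathcal{P}$ by two inclusions and then apply Lemma~\ref{topologicallemma} (with the closed set $\mathcal{FP}$ and the open subset $\mathcal{P}$) to conclude that $\mathcal{P}$ is prevalent in $\mathcal{FP}$. The inclusion $\mathcal{P} \subseteq int(\mathcal{FP})$ is immediate, since Proposition~\ref{mainprop2} asserts that $\mathcal{P}$ is $C^2$-open and clearly $\mathcal{P} \subseteq \mathcal{FP}$. The substantive step is the reverse inclusion $int(\mathcal{FP}) \subseteq \mathcal{P}$, which by contraposition amounts to producing, for any $g \in \mathcal{FP} \setminus \mathcal{P}$, a sequence $g_n$ with $g_n \to g$ in $C^2$ and $g_n \notin \mathcal{FP}$ for each $n$.

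Given such a $g$, there must exist a point $p \in M$, a $g$-causal vector $v \in T_pM$, and a vector $w \in T_pM$ non-collinear to $v$ with $Riem(g)_p(w,v,v,w) = 0$: membership in $\mathcal{FP}$ precludes strictly negative values, while failure of $\mathcal{P}$ precludes strict positivity on every admissible pair. I would then work in $g$-normal coordinates centered at $p$, so that $g_{ij}(p) = \eta_{ij}$ and $\partial_k g_{ij}(p) = 0$, in which the Riemann components at $p$ collapse to the clean linear expression
\[
R_{ijkl}(p) = \tfrac{1}{2}\bigl(\partial_i \partial_k g_{jl} + \partial_j \partial_l g_{ik} - \partial_i \partial_l g_{jk} - \partial_j \partial_k g_{il}\bigr)(p).
\]
The strategy is to perturb $g$ by a smooth compactly supported symmetric $(0,2)$-tensor $h$ with $h(p) = 0$ and $\partial h(p) = 0$ but whose second derivatives at $p$ are prescribed so that the induced linear change $\delta R(w,v,v,w)$ at $p$ is strictly negative. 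Concretely, after a linear adjustment of normal coordinates placing $v$ and $w$ in a convenient standard form (e.g.\ $v = \partial_0$ if timelike, or $v = \partial_0 + \partial_1$ if null, with $w$ having a spacelike component $\partial_a$ independent of $v$), one may take $h_{ij}(x) = \phi(x) Q_{ij}(x)$ where $\phi$ is a bump function identically $1$ on a small ball around $p$ and $Q_{ij}$ is a homogeneous quadratic polynomial chosen so that the contraction of the bracket above with $w^i v^j v^k w^l$ evaluates to $-1$; a direct test such as $Q_{aa}(x) = (x^0)^2$ with remaining entries zero shows that the map from symmetric second-jets of $h$ at $p$ to $\delta R(w,v,v,w)$ is a nonzero linear functional whose sign can be freely reversed.

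Setting $g_n := g + \tfrac{1}{n} h$, the sequence converges to $g$ in the $C^\infty$ strong Whitney topology (hence in $C^2$) because $h$ has compact support; for $n$ large, $g_n$ is Lorentzian by $C^0$-openness of $L$, and since $h(p) = 0$ the metric at $p$ is unchanged, so $v$ remains $g_n$-causal at $p$ and $w$ remains non-collinear to $v$. Crucially, since $\partial h(p) = 0$ we also have $\partial g_n(p) = 0$, so the Christoffel symbols of $g_n$ vanish at $p$ and the Riemann formula displayed above is \emph{exact} for $g_n$ at $p$ with no nonlinear correction. This yields $Riem(g_n)_p(w,v,v,w) = 0 - 1/n = -1/n < 0$, hence $g_n \notin \mathcal{FP}$, as required. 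I expect the main technical point to be verifying carefully that the linear map from symmetric second-derivative jets of $h$ at $p$ to $\delta R(w,v,v,w)$ is genuinely nonzero for every admissible pair $(v,w)$ with $v$ causal and $w$ non-collinear; this reduces to an elementary linear-algebra check in normal coordinates exploiting the Bianchi symmetries, which can be completed by a short case split on whether $v$ is timelike or null.
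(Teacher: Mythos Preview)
Your proposal is correct and takes a genuinely different route from the paper. The paper constructs the approximating sequence via \emph{conformal} rescalings $g_n = e^{2\xi_n} g$ with $\xi_n = \xi/n$ for a compactly supported $\xi$, and then computes $Riem(g_n)(w,v,v,w)$ at $p$ using the standard conformal-change formula for the Riemann tensor, splitting into three cases ($v$ timelike; $v$ null with $w$ spacelike; $v$ null with $w$ null) and choosing a different $\xi$ in each. Your additive perturbation $g_n = g + \tfrac{1}{n}h$ with $h(p)=0$, $\partial h(p)=0$ is arguably cleaner: because the $0$- and $1$-jet of the metric at $p$ are unchanged, the normal-coordinate expression for $Riem(g_n)_p$ is \emph{exactly} linear in the second derivatives, so no conformal bookkeeping or higher-order error terms arise, and the question reduces to the purely algebraic fact that the linear functional $(\partial^2 h)(p) \mapsto \delta R(w,v,v,w)$ is nonzero whenever $v,w$ are independent. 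That algebraic fact is indeed true (it follows, for instance, from surjectivity of the $2$-jet map onto algebraic curvature tensors together with the observation that $R\mapsto R(w,v,v,w)$ is a nonzero functional on curvature-like tensors for independent $v,w$), though your single explicit test $Q_{aa}=(x^0)^2$ only verifies it in the timelike case; you should either carry out the null case explicitly as you indicate, or invoke the surjectivity argument once and for all to avoid the case split. The paper's conformal approach has the mild advantage that the perturbing metrics stay in the conformal class (irrelevant here), while your approach is more elementary and uniform across cases.
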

	\begin{proof}
		Clearly, $\mathcal{P}\subset int(\mathcal{F})$. To prove the other inclusion, we adapt the arguments in the proof of \cref{teo:FAnowhere} above. Specifically, we fix $g \in \mathcal {FP} \setminus \mathcal{P},$ and build via conformal rescalings a sequence $g_n \notin \mathcal{FP}$ of metrics converging in $C^2$ to $g$. We will use the standard formulas for how global conformal rescalings of the metric change the Riemann curvature tensor. (See e.g. \textcite[Eq. 7.44]{leeRiemann}. Although presented for Riemannian metric, the formulas do remain valid in this Lorentzian context.)

		Since $g \notin \mathcal {P},$ we can pick a $g$-causal vector $v \in T_p M$ for some $p \in M,$ and some $w \neq 0$ at the same base point not collinear to $v$ with $Riem(g)(w,v,v,w) = 0.$ Assume first that $v$ is $g$-timelike, so $w$ can be assumed to be spacelike and normal to $v,$ and both can also be assumed to be $g$-unit vectors. Choose a $g$-normal neighborhood system $(x^1, \ldots, x^{dim\, M})$ centered at $p,$ which can be chosen such that $v$ has components $(1, 0,\dotsc, 0),$ and also such that first component of $w$ is zero. Using such coordinates, consider the function $\xi(x^1,\dotsc, x^{dim \, M}) := e^{x^1}.$ We globally extend $\xi$ with usual bump function arguments, (denoting he extension the same way) so that $\xi$ is, in coordinates, $e^{x^1}$ around $p,$ and zero outside some compact subset of $M$ containing $p.$ Define, for each $n\in \mathbb{N}$, $\xi_n = \xi/n,$ $g_n = e^{2\xi_n}g,$ so that again $g_n \to g$ in the $C^2$ topology. With our choice of coordinates, a straightforward computation gives
		\[
		Riem(g_n)_p(w,v,v,w) = -\frac{e^{2/n}}{n} < 0,
		\] 
		so $g_n \notin \mathcal {FP}$ as desired. The case when $v$ is $g$-lightlike is slightly more involved. In this case we pick normal coordinates $(x^1,\ldots, x^{dim\, M})$ centered at $p$ such that $e_i:=\partial /\partial x^i(p)$ ($i=1, \ldots, dim\, M$) form a $g$-orthonormal basis with $e_1$ timelike and $v= e_1+ e_2$. Let $\ell:= e_1-e_2$. $\ell$ is also null, and not collinear with $v$. Now, write
		\[
		w= a\cdot v + b\cdot \ell + \sum_{i=3}^{dim \, M}w^i\cdot e_i.
		\]
		Since $w$ is not collinear with $v$, at least one of the $b,w^i$ are nonzero, and since the part parallel to $v$ gives no contribution to $Riem(g)(w,v,v,w)$ due to the curvature symmetries we can assume without loss of generality that $a=0$. Observe that the vector $\sum_{i=3}^{dim \, M}w^i\cdot e_i$ is normal to both $v$ and $\ell,$   and we have the possibility that either $w$ is lightlike (when each $w^i\equiv 0$) or spacelike. Consider first the spacelike case.  Define $\xi(x^1, x^2,\dotsc, x^n) = (x^1 + x^2)^2$. Proceeding thenceforth just as in the timelike case, we now obtain
		\[
		Riem(g_n)(w,v,v,w) = -\frac{4}{n}g(w,w) < 0.
		\]
		Suppose now $w$ is lightlike. Rescaling, we can assume $w = \ell$. Thus, define $\xi(x^1, x^2,\dotsc, x^n) = (x^1)^2$. Ckmputing as before we obtain 
		\[
		Riem(g_n)(w,v,v,w) =  -\frac{8}{n} < 0.
		\]
		Therefore in each case we conclude that $g_n \notin \mathcal {FP}$ as desired, thus completing the proof. 
	\end{proof}
	
	\begin{remark}\label{remark3}
		Again, by arguments analogous to those in \cref{rmk2}(ii), \cref{teo:rie} remains valid for the $C^\infty$ topology, provided we work with smooth metrics. 
	\end{remark}
	
	The following set is actually more relevant: let $\mathcal O\subset \Gamma^2(L)$ be the the of $2$-differentiable metrics for which the associated {\it tidal force operators} (see \textcite[pp. 35 and 38]{BeemGLG} for definitions and notation) along causal directions are positive semidefinite, that is, for $g \in \Gamma^2(L)$ and $v \in TM$ $g$-causal, the linear operators
	\[
	\begin{cases}
		R^{g}_v : v^\perp \to v^\perp, \quad v \text{ for $g$-timelike,} \\
		\overline{R^{g}}_v : \overline{v^\perp} \to \overline{v^\perp}, \quad v \text{ for $g$-lightlike,}
	\end{cases}
	\]
	are both positive semidefinite. It is straightforward to see that \begin{align}
		\mathcal  P&\subseteq \mathcal{SE}; \label{3}\\   
		\mathcal P &\subseteq \mathcal O \subseteq \mathcal {FP}\subset \mathcal{E}\label{4}
	\end{align}
	Therefore, $\overline{\mathcal P} \subseteq \overline{\mathcal O} \subseteq \mathcal{FP},$ and since $int(\mathcal {FP}) = \mathcal P,$ then $int(\mathcal {FP}) \subseteq \mathcal O,$ and the following corollary of \cref{teo:rie} is immediate.
	
	\begin{corollary}\label{maincor2}
		In the $C^2$ topology, $\mathcal O$ is prevalent in $\mathcal {FP}.$  \hfill $\ensuremath{\Box}$
	\end{corollary}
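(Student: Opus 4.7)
The proof will be almost immediate once one combines the inclusion chain \eqref{4}, the content of \cref{teo:rie}, and the stability/monotonicity property of prevalence recorded just after \cref{def:prevalence} (namely, that any subset of $\mathcal{FP}$ containing a prevalent subset of $\mathcal{FP}$ is itself prevalent). The plan is therefore not to introduce new analytic constructions, but simply to verify that $\mathcal{O}$ lies between a set we already know is prevalent in $\mathcal{FP}$ and $\mathcal{FP}$ itself.

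Concretely, first I note that \eqref{4} gives $\mathcal P\subseteq \mathcal O\subseteq \mathcal{FP}$, so in particular $\mathcal{FP}\setminus \mathcal O\subseteq \mathcal{FP}\setminus \mathcal P$. Next, \cref{teo:rie} asserts $\mathcal P = \mathrm{int}(\mathcal{FP})$ in the $C^2$ topology on $\Gamma^2(L)$; applying \cref{topologicallemma} with $X=\Gamma^2(L)$, $C=\mathcal{FP}$, and $U=\mathcal P$ (which is open in $X$ by \cref{mainprop2} and trivially satisfies $\mathrm{int}(\mathcal{FP})\subseteq \overline U$ since $U=\mathrm{int}(\mathcal{FP})$), one obtains that $\mathcal{FP}\setminus \mathcal P$ is nowhere dense in $\Gamma^2(L)$, in particular meager. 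Consequently $\mathcal{FP}\setminus \mathcal O$, being contained in this meager set, is itself meager in $\Gamma^2(L)$, which by \cref{def:prevalence} is exactly the statement that $\mathcal O$ is prevalent in $\mathcal{FP}$.

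There is no substantive obstacle here; the only thing to double-check is the inclusion $\mathcal P\subseteq \mathcal O$ in \eqref{4}, which amounts to the standard observation that, for $g\in \mathcal P$ and any $g$-causal $v$, the tidal operator $R^{g}_v$ (or its quotient $\overline{R^{g}}_v$ in the null case) satisfies $\langle R^{g}_v w,w\rangle_g = Riem(g)(w,v,v,w)\geq 0$ for $w\in v^\perp$ (respectively $w\in \overline{v^\perp}$), with the non-collinearity hypothesis automatic in those orthogonal complements once $w\neq 0$ (after passing to the quotient in the null case). Thus the corollary follows directly, and the proof reduces to the two-line chain of inclusions and the application of \cref{teo:rie} and \cref{topologicallemma}.
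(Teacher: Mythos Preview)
Your argument is correct and follows essentially the same route as the paper: use the inclusions \eqref{4} to sandwich $\mathcal O$ between $\mathcal P$ and $\mathcal{FP}$, invoke \cref{teo:rie} to get $\mathcal P=\mathrm{int}(\mathcal{FP})$ prevalent in $\mathcal{FP}$, and conclude via the monotonicity of prevalence (equivalently, via \cref{topologicallemma}). Your additional verification of $\mathcal P\subseteq \mathcal O$ is accurate and just makes explicit what the paper leaves as ``straightforward to see''.
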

	\begin{remark}\label{easyrmk}
		{\em It is straightforward to check that $g\in \mathcal{FP}$ if and only if 
			\[
			Riem(g)(w,v,v,w)\geq 0, \forall v\in TM \text{ $g$-timelike and all $w\in v^{\perp_g}$}.
			\]
		}
	\end{remark}
	
	\subsection{Singularities in higher codimensions}\label{sect:sing2}
	
	We consider a similar analysis as the one carried out in  \cref{sect:sing1} when the codimension $k$ of the submanifold $\Sigma\subset M$ is higher than 2. However, we cannot use the Hawking-Penrose singularity theorem in this context, but rather use some analogous singularity theorem in the presence of future-trapped submanifolds of codimension higher than 2. Just such a result has been obtained by  \textcite{Galloway_senovilla}.  Also, similar to what we did in \cref{sect:sing1} (cf. \cref{remark3}), the following discussion uses the $C^\infty$ topology on the set of smooth metrics, and is easily adapted to $C^2$ metrics.
	
	We need only to check that the positive-semidefiniteness of the tidal force operators along causal directions encoded in the set $\mathcal O$  does imply the required conditions. Specifically, let $g \in \mathcal{A}\cap \mathcal{O}$ and assume that $\Sigma$ is closed. Following \textcite{Galloway_senovilla}, let $\gamma:[0,b)\rightarrow M$ be some future-directed causal geodesic with $\gamma(0)\in \Sigma$ and $\gamma'(0)$ normal to $\Sigma$. Consider $e_1,\dotsc, e_m$ some coordinate basis of tangent vectors for $T_{\gamma(0)} \Sigma,$ and let $E_1, \dotsc, E_m$ denote their parallel-transport vector fields along $\gamma$. Write $g_{ab} = g(E_a, E_b)$ (which is constant along $\gamma$). Along this geodesic, since we have assumed that $R_{\gamma'}$ (or $\overline{R_{\gamma '}}$ if $\gamma$ is null) is positive semidefinite in the subspaces spanned by the vectors $E_a,$ the trace
	\[
	g^{ab}Riem(\gamma', E_a, E_b, \gamma') \geq 0.
	\]
	But this is precisely the condition 3.1 in  \textcite{Galloway_senovilla}. Observe also that the inclusions \eqref{4} imply that the positive semidefiniteness of the tidal forces also entails the timelike convergence condition. Therefore, if in addition we assume that $(i)$ $(M,g)$ is also chronological and $(ii)$ satisfies the generic condition for causal vectors, then $(M,g)$ is causally geodesically incomplete by \textcite[Thm. 3]{Galloway_senovilla}. Therefore, the arguments in \cref{sect:sing1} by taking \cref{mainprop2}, \cref{teo:rie}, remark \ref{easyrmk} and the inclusions \eqref{3}, \eqref{4} into account can be now adapted as follows.
	
	\begin{theorem}\label{mainthm2}
		Let $(M,g)$ be a spacetime of dimension $\geq 3$ and smooth metric containing a spacelike closed submanifold $\Sigma$ of codimension $k\geq2$. Assume that the following conditions hold. 
		\begin{enumerate}
			\item $\Sigma$ is weakly future-trapped;
			\item $Riem(g)(w,v,v,w) \geq 0$ for all $g$-timelike $v\in TM$ and all $w$ at the same base point $g$-orthogonal to $v$;
			\item $(M,g)$ is stably causal. 
		\end{enumerate} 
		There exists a prevalent set $\chi$ in $\mathcal{FP}$ such that if  $g\in \chi$, then there exists a net $(g_\lambda)_{\lambda \in \Lambda}$ of smooth metrics on $M$ such that $g_\lambda \rightarrow g$ in the $C^\infty$ topology and such that for each $\lambda \in \Lambda$, the spacetime $(M,g_\lambda)$ satisfies the following conditions
		\begin{enumerate}
			\item[(i)] $\Sigma$ is future-trapped;
			\item[(ii)] $Ric(g_\lambda)(v,v) > 0$ for all $g_\lambda$-causal $v \in TM$, and thus the causal genericity condition holds in $(M,g_\lambda)$;
			\item[(iii)] the tidal force operators along $g_\lambda$-causal directions are all positive semidefinite;
			\item[(iv)] $(M,g_\lambda)$ is stably causal.
		\end{enumerate}
		In particular, each $(M,g_\lambda)$ has at least one incomplete causal geodesic.	  \hfill $\ensuremath{\Box}$
	\end{theorem}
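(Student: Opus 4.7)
The plan is to adapt the strategy of \cref{sect:sing1}, substituting $\mathcal{P}$ and $\mathcal{FP}$ for $\mathcal{SE}$ and $\mathcal{E}$, and invoking the Galloway--Senovilla singularity theorem for higher-codimensional trapped submanifolds in place of Hawking--Penrose. Concretely, I would define
\[
\mathcal{N} := \mathcal{A} \cap \mathcal{P} \cap \mathscr{ST}(X) \cap \mathscr{S}_\Sigma \cap \mathcal{SC}, \qquad \mathcal{FN} := \mathcal{FA} \cap \mathcal{FP} \cap \mathscr{ST}(X) \cap \mathscr{S}_\Sigma \cap \mathcal{SC}.
\]
By \cref{easyrmk}, $\mathcal{FN}$ is precisely the set of smooth metrics on $M$ satisfying hypotheses (1)--(3) together with the background conditions that $X$ is $g$-future-timelike and $\Sigma$ is $g$-spacelike. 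The candidate for the prevalent set is $\chi := \mathcal{N}$.

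The first step is to verify that $\chi$ is prevalent in $\mathcal{FN}$ in the $C^\infty$ topology. \cref{teo:FAnowhere} (extended to the smooth category via \cref{rmk2}(ii)) gives that $\mathcal{FA} \setminus \mathcal{A}$ is meager in $\Gamma^\infty(L)$, and \cref{teo:rie} together with \cref{remark3} gives the same for $\mathcal{FP} \setminus \mathcal{P}$. Since
\[
\mathcal{FN} \setminus \mathcal{N} \subseteq (\mathcal{FA} \setminus \mathcal{A}) \cup (\mathcal{FP} \setminus \mathcal{P}),
\]
the complement of $\chi$ in $\mathcal{FN}$ is meager in $\Gamma^\infty(L)$, so $\chi$ is prevalent in $\mathcal{FN}$ as required.

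Next I would check that every $g \in \chi$ satisfies conclusions (i)--(iv), simply by taking the trivial constant net $g_\lambda \equiv g$. Items (i) and (iv) are immediate from $g \in \mathcal{A} \cap \mathcal{SC}$. For (ii) and (iii), the inclusions \eqref{3} and \eqref{4} do the work: $\mathcal{P} \subseteq \mathcal{SE}$ yields strict Ricci positivity on $g$-causal vectors, from which the generic condition along causal directions follows as noted after \cref{teo:riccinowwhere}; while $\mathcal{P} \subseteq \mathcal{O}$ gives positive semidefiniteness of the tidal force operators along causal directions. With (i)--(iv) established, the singularity conclusion follows from Theorem~3 of Galloway--Senovilla: tracing the defining inequality of $\mathcal{O}$ along a parallel frame $E_1,\dots,E_m$ for $T\Sigma$ over a future-directed causal geodesic $\gamma$ issuing normally from $\Sigma$ yields the nonnegative trace $g^{ab}\, Riem(\gamma', E_a, E_b, \gamma') \geq 0$, which combined with the trapping of $\Sigma$ and the chronology inherited from stable causality forces $\gamma$ to be incomplete. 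The main technical subtlety I expect is handling the null case of this trace computation, where one must pass to the quotient bundle $\overline{\gamma'^{\perp}}$ and the null tidal operator $\overline{R^{g}}_{\gamma'}$; however, the definition of $\mathcal{O}$ is tailored precisely for this, as is already implicit in the paragraph preceding the theorem statement, so the verification should go through without further complications.
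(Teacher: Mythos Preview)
Your proof is correct and follows essentially the same approach as the paper: the discussion in \cref{sect:sing2} amounts precisely to forming the higher-codimension analogues of $\mathcal{M}$ and $\mathcal{FM}$ from \cref{sect:sing1} with $\mathcal{P},\mathcal{O},\mathcal{FP}$ in place of $\mathcal{SE},\mathcal{E}$, obtaining prevalence from \cref{teo:FAnowhere} and \cref{teo:rie} via the inclusions \eqref{3}--\eqref{4}, and then invoking Galloway--Senovilla exactly as you outline (including the trace computation over a parallel frame). One small remark: the theorem's phrase ``prevalent set $\chi$ in $\mathcal{FP}$'' should be read, in parallel with $\mathcal{FM}$ in \cref{mainthm1}, as prevalence in your $\mathcal{FN}=\mathcal{FA}\cap\mathcal{FP}$, which is what you prove and what the paper intends.
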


	\section{Genericity of singularities in Cauchy developments of initial data sets}\label{sectmain3}
	
	We now turn to an investigation of genericity of singular spacetimes (under suitable conditions) representing maximal Cauchy developments of \textit{initial data sets} containing a MOTS. We shall do so by considering a separable Hilbert manifold structure on the set of triples $(h,\mathcal K, \psi),$ where $(h, \mathcal K)$ is an initial data on a fixed connected $n$-manifold $\mathcal{S}$ and $\psi : \Sigma \to \mathcal S$ a MOTS embedding, where again $\Sigma$ is a fixed compact and connected manifold (without boundary). The genericity statement we seek will follow from an abstract functional-analytic approach together with the well-known Sard-Smale theorem.

	\subsection{An abstract Banach manifold genericity result} In order to obtain our main genericity results, we shall adopt a suitable Banach manifold structure both on the set of initial data on a fixed connected $n$-manifold $\mathcal{S}$, and on the set of embeddings of a fixed connected compact $(n-1)$-manifold $\Sigma$ into $\mathcal{S}$. However, we shall first establish in this section a key but purely abstract result on Banach manifolds, via a relatively straightforward adaptation of the main analysis in \textcite{piccione1}. Our first lemma is a direct adaptation of lemma 2.2 in that reference. For later reference, we also list a few other results from \cite{piccione1} that will also be needed.
	
	Let $W$ be a closed vector subspace in a Hilbert space $H.$ We denote by $P_W : H \to W$ the associated orthogonal projection onto $W.$ 
	
	Recall that a bounded linear operator $T: E \to F$ between Banach spaces is \emph{Fredholm} if  $\ker T$ and $\operatorname{coker}{T}$ have finite dimension\footnote{Some authors explicitly require the range of $T$ to be closed; however, this condition follows easily from the finite dimensionality of the cokernel.} , where  $\operatorname{coker}{T} = E/ \im T$ (the dimension of the cokernel is also called the \emph{codimension} of $\im T$). The \emph{index} of a Fredholm operator $T$ is defined as $\operatorname{ind}(T) = \dim \ker T - \dim \operatorname{coker}{T}.$ For later reference, recall also that if $M,N$ are (perhaps infinite dimensional) Banach manifolds and $a$ is an integer number, then a $C^1$ map $f: M\rightarrow N$ is said to a \textit{Fredholm map of index $a$} if its derivative $df_p:T_pM\rightarrow T_{f(p)}N$ is a Fredholm linear operator of fixed index $a$ for every $p \in M$.

	%
	
	\begin{lemma}\label{funcanlsurj}
		Let $E, F$ be Banach spaces, and let  $H$ be a Hilbert space with inner product $\inner{\,\,}{\,}.$ Consider  $T : E \to H,$  $S : F \to H$ bounded linear operators, with $\im S$ closed and such that $P_{\im S^\perp}(\im T)$ is closed in $\im S^\perp$ (this happens for example if $S$ is a Fredholm operator). Then the sum operator $T \oplus S : E \times F \to H$ (given by $T \oplus S(x,y) = T(x) + S(y)$) is surjective if and only if $\im T ^\perp \cap \im S^\perp = \{0\}.$ 
	\end{lemma}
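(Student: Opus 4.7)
The plan is to reformulate the claim in terms of sums of closed subspaces of $H$ and then reduce everything to the standard Hilbert-space identity $(A+B)^{\perp}=A^{\perp}\cap B^{\perp}$. Observe first that by definition $\im(T\oplus S)=\im T+\im S$, so surjectivity of $T\oplus S$ is equivalent to $\im T+\im S=H$. One direction is immediate: if $\im T+\im S=H$, then any $v\in \im T^{\perp}\cap \im S^{\perp}$ is orthogonal to every vector of $H$ and hence vanishes.

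For the converse, the crucial intermediate fact I would establish is that \emph{under the standing hypotheses, the algebraic sum $\im T+\im S$ is closed in $H$}. This is where the technical assumption on $P_{\im S^{\perp}}(\im T)$ enters. Granted this, we have the chain of identities
\begin{equation*}
\im T+\im S \;=\; (\im T+\im S)^{\perp\perp} \;=\; \bigl(\im T^{\perp}\cap \im S^{\perp}\bigr)^{\perp},
\end{equation*}
where the first equality uses closedness and the second uses $(A+B)^{\perp}=A^{\perp}\cap B^{\perp}$ applied to the (not necessarily closed) subspaces $\im T$ and $\im S$. Therefore $\im T^{\perp}\cap \im S^{\perp}=\{0\}$ forces $\im T+\im S=H$, as required.

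To prove the closedness claim I would decompose $H=\im S\oplus \im S^{\perp}$ via the orthogonal projection onto the closed subspace $\im S$. Given a sequence $v_n=t_n+s_n\in \im T+\im S$ converging to some $v\in H$, apply $P_{\im S^{\perp}}$ to obtain $P_{\im S^{\perp}}(t_n)\to P_{\im S^{\perp}}(v)$; since $P_{\im S^{\perp}}(\im T)$ is assumed closed in $\im S^{\perp}$, the limit $P_{\im S^{\perp}}(v)$ lies in $P_{\im S^{\perp}}(\im T)$, so there is some $t\in \im T$ with $P_{\im S^{\perp}}(v-t)=0$, i.e.\ $v-t\in \im S$. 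Thus $v\in \im T+\im S$. The parenthetical remark about Fredholm operators is then essentially automatic: if $S$ is Fredholm then $\im S$ is closed (by definition/standard characterization) and $\im S^{\perp}\cong \operatorname{coker} S$ is finite-dimensional, so \emph{every} subspace of $\im S^{\perp}$ is closed, making the hypothesis on $P_{\im S^{\perp}}(\im T)$ automatic.

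I expect the only subtle point to be the closedness step, which is the one genuine use of the technical hypothesis; the rest is formal manipulation with orthogonal complements in a Hilbert space. The identity $(A+B)^{\perp}=A^{\perp}\cap B^{\perp}$ holds for arbitrary subspaces (not just closed ones) precisely because taking $\perp$ in a Hilbert space is automatically taking $\perp$ of the closure, so no additional care is needed there.
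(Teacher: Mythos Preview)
Your argument is correct. Both directions are fine, and your closedness step is exactly right: from $v_n=t_n+s_n\to v$ you get $P_{\im S^\perp}(t_n)\to P_{\im S^\perp}(v)$, the hypothesis puts the limit in $P_{\im S^\perp}(\im T)$, and then $v-t\in(\im S^\perp)^\perp=\im S$ since $\im S$ is closed. The double-complement identity then finishes the job cleanly.

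The paper takes a slightly different, more constructive route for the nontrivial implication. Rather than first proving that $\im T+\im S$ is closed and then invoking $(\cdot)^{\perp\perp}$, it fixes $z\in H$ and builds a preimage by hand: write $z=S(y)+u$ with $u\in\im S^\perp$, use the closedness of $P_{\im S^\perp}(\im T)$ to split $u=P_{\im S^\perp}(T(x))+v$ inside $\im S^\perp$, rewrite $T(x)=S(\hat y)+P_{\im S^\perp}(T(x))$, and then check directly that the leftover piece $v$ lies in $\im T^\perp\cap\im S^\perp$, hence vanishes. The two arguments share the same engine---the orthogonal decomposition of $\im S^\perp$ coming from the hypothesis---but your packaging isolates ``$\im T+\im S$ is closed'' as a standalone fact and makes the role of the technical assumption more transparent, while the paper's version is a direct element-by-element surjectivity check. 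Your handling of the Fredholm parenthetical (finite-dimensional $\im S^\perp$ forces every subspace to be closed) is also correct and matches the intended reason.
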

	\begin{proof}
		Assuming the sum operator to be surjective, given $z \in  \im T ^\perp \cap \im S^\perp,$ then there is a pair $(x,y)$ with  $z = T(x) + S(y),$ but also by perpendicularity $$\|z\|^2 = \inner{z}{z} = \inner{z}{ T(x) + S(y)} = 0. $$ 
		
		Assuming now $\im T ^\perp \cap \im S^\perp = \{0\},$ take some $z \in H\setminus\{0\}.$ With $\im S$ closed, $H = \im S \oplus \im S^\perp,$ so $z = S(x) + u,$ $u \in   \im S^\perp.$ We can also split $ \im S^\perp$ (since it is a Hilbert space under the restriction of the inner product and  $P_{\im S^\perp}(\im T)$ is closed in $\im S^\perp$) as follows: 
		\[
		\im S^\perp = [P_{\im S^\perp}(\im T)] \oplus [(P_{\im S^\perp}(\im T))^\perp\cap 	\im S^\perp].
		\]
		Now, $u = P_{\im S^\perp}(T(x))+ v,$ where $v \in (P_{\im S^\perp}(\im T))^\perp\cap 	\im S^\perp.$ Also, $T(x) = S(\hat y) +  P_{\im S^\perp}(T(x)),$ and we have $z = T(x) + S(y) - S(\hat y) + v.$ If $v = 0$ we are done. Now, $v \in \im S^ \perp,$  and for any $T(e),$ $e \in E,$ again $T(e) = S(f) +  P_{\im S^\perp}(T(e)),$   and we obtain,  $$\inner{v}{T(e)} = \inner{v}{S(f)} + \inner{v}{ P_{\im S^\perp}(T(e))} = 0,$$
		meaning $v \in \im T^\perp,$ implying $v=0.$  
	\end{proof}
	
	\begin{remark}\label{rmk:adjoints}
		{\em For the Banach adjoint $T^* : H^* \to E^*,$ it is a standard textbook result (\textcite{bachman2000functional}, p. 285) that we have $\im T^\perp = \ker T^*,$ so the trivial intersection of images in \cref{funcanlsurj} can be substituted with trivial intersection of the kernels of adjoint maps.}
	\end{remark}
	
	Recall also that in a Banach space $E,$ a closed subspace $V \subseteq E$ is said to be \emph{complemented} if there is another another closed subspace $W \subseteq E,$ called a \emph{complement of $V$,} satisfying $E = V \oplus W.$
	
	\begin{proposition}[\cite{piccione1}, lemma 2.4]\label{complementkernel}
		Let $E, F, G$ be Banach spaces,  and $T: E \to G,$ $S : F \to G$ bounded linear operators, with $\ker S$ complemented in $F$ and $\im S$ finite codimensional in $G$ (this happens for example if $S$ is a Fredholm operator). Then the kernel of $T \oplus S$ is complemented in $E \times F.$      
	\end{proposition}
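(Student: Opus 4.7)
The plan is to produce an explicit closed complement of $\ker(T\oplus S)$ in $E\times F$ by using the two hypotheses to ``solve'' the defining equation $T(x)+S(y)=0$ in a controlled, parametrized way. First, I would use the hypothesis that $\ker S$ is complemented in $F$ to pick a closed subspace $K\subseteq F$ with $F=\ker S\oplus K$. The restriction $S|_K\colon K\to \im S$ is then a continuous linear bijection, and since $\im S$ is closed in $G$ (which follows from it being finite-codimensional), the open mapping theorem gives that $(S|_K)^{-1}\colon \im S\to K$ is a bounded linear isomorphism.

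Second, I would exploit the finite codimension of $\im S$ by choosing a finite-dimensional subspace $L\subseteq G$ with $G=\im S\oplus L$, and let $P\colon G\to L$ be the associated continuous projection. Setting $T_1:=P\circ T\colon E\to L$, the range of $T_1$ lies in a finite-dimensional space, so $\ker T_1$ has finite codimension in $E$ and is therefore complemented: $E=\ker T_1\oplus N$ for some finite-dimensional $N$. Observe that $T(x)\in \im S$ if and only if $x\in\ker T_1$, so for $(x,y)\in\ker(T\oplus S)$ we automatically have $x\in\ker T_1$, and then $y=y_0(x)+z$ with $y_0(x):=-(S|_K)^{-1}(T(x))\in K$ and $z\in\ker S$. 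This gives the parametrization
\[
\ker(T\oplus S)=\bigl\{(x,\,y_0(x)+z)\ :\ x\in\ker T_1,\ z\in\ker S\bigr\},
\]
with $y_0\colon\ker T_1\to K$ bounded linear.

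Third, I would verify that $M:=N\times K$ is the desired closed complement. Closedness is immediate since $N$ and $K$ are both closed. For $(x,y)\in E\times F$, decompose $x=x_1+x_2$ with $x_1\in\ker T_1$, $x_2\in N$, and then uniquely write $y-y_0(x_1)=z+d$ with $z\in\ker S$, $d\in K$; this produces the splitting $(x,y)=(x_1,y_0(x_1)+z)+(x_2,d)\in\ker(T\oplus S)+(N\times K)$, and the uniqueness of the two underlying direct sum decompositions forces the intersection $\ker(T\oplus S)\cap(N\times K)$ to be trivial. The resulting algebraic direct sum is automatically topological by the closed graph theorem, since both summands are closed in the Banach space $E\times F$.

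The step I expect to be the main obstacle is making sure the candidate complement $N\times K$ is genuinely free of ``hidden'' kernel elements, i.e., verifying the trivial intersection condition; this is where one uses both that $\ker T_1\cap N=\{0\}$ and that $\ker S\cap K=\{0\}$, and it is the place where the two separate complementation assumptions (on $\ker S$ in $F$ and, implicitly, on $\ker T_1$ in $E$) interact. Everything else is routine verification once the parametrization via $y_0$ is in place.
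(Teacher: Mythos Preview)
The paper does not include its own proof of this proposition; it is simply quoted from \cite{piccione1} (lemma 2.4) without argument. So there is nothing to compare against directly.

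That said, your argument is correct and essentially the standard one. Two small remarks. First, your final appeal to the closed graph theorem is fine but slightly roundabout: once you have an algebraic direct sum $E\times F=\ker(T\oplus S)\oplus(N\times K)$ with both summands closed, the map $(a,b)\mapsto a+b$ from $\ker(T\oplus S)\times(N\times K)$ onto $E\times F$ is a continuous linear bijection of Banach spaces, hence a topological isomorphism by the open mapping theorem; this immediately gives continuity of the projections. Second, in the verification of trivial intersection you should make explicit that if $(x_2,d)\in\ker(T\oplus S)\cap(N\times K)$ then $T(x_2)=-S(d)\in\im S$ forces $x_2\in\ker T_1\cap N=\{0\}$, whence $S(d)=0$ and $d\in\ker S\cap K=\{0\}$. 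You state this is the ``main obstacle'' but it is in fact a two-line check once the decompositions are set up; the real content of the proof is the construction of the bounded right inverse $y_0=-(S|_K)^{-1}\circ T$ on $\ker T_1$, which you handle correctly.
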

	\begin{proposition}[\cite{piccione1}, lemma 2.3]\label{absfunc3}
		Let $U, V$ be vector spaces, $L : U \to V$ a linear map, and $S\subseteq V$ a finite codimensional subspace. Then $L^{-1}(S)$ has finite codimension in $U,$ with
		\begin{equation}\label{codimformula}
			\codim_U {(L^{-1}(S))} = \codim_V {(S)} - \codim_V{(S + \im L)}.
		\end{equation}
	\end{proposition}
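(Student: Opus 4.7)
The plan is to exploit the standard first isomorphism theorem for quotient vector spaces, which in this setting gives a clean way to compare codimensions on both sides of $L$. The map I would build is the induced linear map
\[
\bar L : U / L^{-1}(S) \longrightarrow V / S, \qquad \bar L(u + L^{-1}(S)) := L(u) + S.
\]
First I would check that $\bar L$ is well-defined (if $u - u' \in L^{-1}(S)$ then $L(u) - L(u') \in S$), and that it is injective essentially by construction: $\bar L(u + L^{-1}(S)) = 0$ means $L(u) \in S$, i.e.\ $u \in L^{-1}(S)$.

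Next I would identify the image: $\operatorname{Im}(\bar L) = (\operatorname{Im} L + S)/S$. Combining this with injectivity yields the isomorphism of quotient spaces
\[
U / L^{-1}(S) \;\cong\; (\operatorname{Im} L + S)/S,
\]
so that
\[
\codim_U\!\bigl(L^{-1}(S)\bigr) \;=\; \dim\bigl((\operatorname{Im} L + S)/S\bigr).
\]
Since $S \subseteq \operatorname{Im} L + S \subseteq V$, the dimension formula for a nested chain of subspaces (applied to the finite-dimensional quotient $V/S$) gives
\[
\dim(V/S) \;=\; \dim\bigl((\operatorname{Im} L + S)/S\bigr) \;+\; \dim\bigl(V/(\operatorname{Im} L + S)\bigr),
\]
which is precisely
\[
\codim_V(S) \;=\; \codim_U\!\bigl(L^{-1}(S)\bigr) \;+\; \codim_V(S + \operatorname{Im} L).
\]
Rearranging yields the desired identity. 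The finiteness statement comes along for free: the hypothesis $\codim_V(S) < \infty$ bounds both $\codim_U(L^{-1}(S))$ and $\codim_V(S + \operatorname{Im} L)$ from above.

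There is no real obstacle in the argument — this is essentially an exercise in the first isomorphism theorem once one has guessed the correct induced map. The only subtle point is recognizing that the right object to study is the factorization $\bar L$ of $L$ through the quotient $U/L^{-1}(S)$ into $V/S$, rather than trying to work with $\operatorname{Im} L$ directly inside $V$; once this is identified, the rest is bookkeeping with short exact sequences of vector spaces, and the proof requires no topology or completeness whatsoever (which explains why the statement is purely algebraic and does not mention the Banach structure).
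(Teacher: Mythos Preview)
Your argument is correct: the induced map $\bar L : U/L^{-1}(S) \to V/S$ is well-defined and injective with image $(\im L + S)/S$, and the nested-quotient identity $\dim(V/S) = \dim((\im L + S)/S) + \dim(V/(\im L + S))$ then gives exactly the claimed formula. The paper does not supply its own proof of this proposition --- it merely quotes the result from \cite{piccione1}, lemma 2.3 --- so there is no in-paper argument to compare against; your approach via the first isomorphism theorem is the standard one and is precisely how such a lemma would be proved in the cited reference.
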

	%
	
	For the abstract Banach manifold result we seek, we fix a separable Banach manifold $X$, a Hilbert manifold $Y$, and a Hilbert space $V.$ We apply the general functional-analytic results above to prove the following proposition, an adaptation of \textcite{piccione1}, prop. 3.1 with more suitable hypotheses for our concrete case of interest.
	
	\begin{proposition}\label{prop:absctractbanman}
		Consider $A \subseteq X \times Y$ an open set, and let $f : A \to V$ be a $C^k$ function, $k \geq 1.$ For every $(x_0, y_0) \in A$ such that $f(x_0,y_0) = 0,$ assume that the partial derivative $\dfrac{\partial f}{\partial y}(x_0, y_0) : T_{y_0} Y \to V$ is a Fredholm operator. Then $0$ is a regular value of $f$ if and only if
		\begin{equation}\label{eq:perpcondition}
			\left( \im \frac{\partial f}{\partial x}(x_0, y_0) \right)^\perp \cap 	\left( \im \frac{\partial f}{\partial y}(x_0, y_0) \right)^\perp = \{0\}.
		\end{equation}
		\begin{proof}
			0 is a regular value of $f$ if and only if for all $(x_0, y_0) \in f^{-1}(0),$ $df_{(x_0, y_0)} : T_{x_0}X \times T_{y_0} Y \to V$ is surjective with complemented kernel. Now  $df_{(x_0,y_0)}$ can be written as the sum operator
			\[
			df_{(x_0,y_0)} = \frac{\partial f}{\partial x}(x_0, y_0) \oplus \frac{\partial f}{\partial y}(x_0, y_0).
			\]
			Applying \cref{funcanlsurj} since $\dfrac{\partial f}{\partial y}(x_0, y_0)$ is Fredholm, it follows that if $df_{(x_0,y_0)}$ is surjective  then (\ref{eq:perpcondition}) holds. Conversely if (\ref{eq:perpcondition}) holds,  by  \cref{funcanlsurj} we obtain surjectivity, and also by \cref{complementkernel} the kernel of $df_{(x_0,y_0)}$ is complemented.
		\end{proof}
	\end{proposition}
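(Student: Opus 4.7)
My plan is to unpack the definition of a regular value in the Banach/Hilbert manifold setting and then recognize the differential $df_{(x_0,y_0)}$ as precisely the kind of sum operator to which the abstract functional-analytic lemmas \ref{funcanlsurj} and \ref{complementkernel} apply.

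First I would recall that in this (possibly infinite-dimensional) context, $0$ being a regular value of $f$ means that for every $(x_0,y_0) \in f^{-1}(0)$ the differential $df_{(x_0,y_0)} : T_{x_0}X \times T_{y_0}Y \to V$ is surjective \emph{and} its kernel splits (i.e. is complemented), since this is what is needed to obtain that $f^{-1}(0)$ is a submanifold. Next, I would observe that with respect to the product structure on the tangent space, $df_{(x_0,y_0)}$ decomposes as the sum operator
\[
df_{(x_0,y_0)}(u,v) \;=\; \frac{\partial f}{\partial x}(x_0,y_0)\,u \;+\; \frac{\partial f}{\partial y}(x_0,y_0)\,v,
\]
putting us exactly in the setting of Lemma \ref{funcanlsurj} with $T = \partial f/\partial x$, $S = \partial f/\partial y$, and $H = V$.

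For the forward direction of the biconditional, I would invoke Lemma \ref{funcanlsurj}: since $\partial f/\partial y(x_0,y_0)$ is Fredholm, its image is closed and $P_{\im S^{\perp}}(\im T)$ is closed in $\im S^{\perp}$ (the latter being finite-dimensional because $\im S$ has finite codimension in $V$, so any linear subspace thereof is automatically closed). Thus surjectivity of the sum operator $df_{(x_0,y_0)}$ is equivalent to the perpendicular condition \eqref{eq:perpcondition}, giving both implications of the equivalence once the complementation of the kernel is also handled. For the kernel, I would invoke Proposition \ref{complementkernel}: again using Fredholmness of $\partial f/\partial y(x_0,y_0)$, we have that $\ker(\partial f/\partial y(x_0,y_0))$ is finite-dimensional, hence complemented in $T_{y_0}Y$, and $\im(\partial f/\partial y(x_0,y_0))$ has finite codimension in $V$. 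Thus $\ker df_{(x_0,y_0)}$ is automatically complemented whenever $df_{(x_0,y_0)}$ is defined on a product of Banach spaces, so the splitting condition in the definition of a regular value comes for free.

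The main conceptual obstacle, such as it is, lies in keeping track of which hypotheses are used where: the Fredholm assumption on $\partial f/\partial y$ does double duty, supplying both the closed-image hypothesis needed to apply Lemma \ref{funcanlsurj} and the complementation hypothesis needed to apply Proposition \ref{complementkernel}. Once this bookkeeping is in place, the proof is essentially a one-line assembly of the abstract lemmas. No extra analytic input is required, and in particular it does not matter whether $X$ is only a Banach manifold while $Y$ is Hilbert: the Hilbert structure is used solely to make sense of the orthogonal complements appearing in \eqref{eq:perpcondition}, while the Banach-level finiteness statements from \ref{complementkernel} and \ref{absfunc3} take care of the splitting of $\ker df_{(x_0,y_0)}$.
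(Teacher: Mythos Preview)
Your proof is correct and follows essentially the same route as the paper's own proof: identify $df_{(x_0,y_0)}$ as the sum operator of the two partial derivatives, apply Lemma~\ref{funcanlsurj} (using Fredholmness of $\partial f/\partial y$) for the surjectivity equivalence, and apply Proposition~\ref{complementkernel} for the complemented-kernel condition. Your version is simply more explicit about why the hypotheses of those lemmas are met (in particular, spelling out that $\im S^\perp$ is finite-dimensional so any subspace is closed).
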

	
	\begin{remark}\label{rmk:tcondition}
		For the sake of brevity, we shall name the intersection condition (\ref{eq:perpcondition}) as the \emph{T condition}. As stated in \cref{rmk:adjoints}, we can substitute this condition for 
		\[
		\ker {\left( \frac{\partial f^*}{\partial x}(x_0, y_0) \right)} \cap 	\ker {\left( \frac{\partial f^*}{\partial y}(x_0, y_0) \right)}= \{0\}
		\]
		at points $(x_0, y_0) \in f^{-1}(0).$ This might eventually be helpful in concrete situations as it directly states the condition in the form of system of equations for the adjoints. 
	\end{remark}
	
	Under the hypothesis of \cref{prop:absctractbanman}, assuming (\ref{eq:perpcondition}) to be valid, then the level set $M =f^{-1}\{0\} \subseteq A$ is an embedded submanifold or $X\times Y,$ with tangent space at some $(x_0,y_0)\in M$ given by
	\[
	T_{(x_0,y_0)} M = \left\{\, (v,w) \in T_{x_0} X \times T_{y_0} Y : \frac{\partial f}{\partial x}(x_0, y_0)(v) + \frac{\partial f}{\partial y}(x_0, y_0)(w) = 0  \, \right\}.
	\] 
	The main abstract genericity result we will need is the following (adapted from \textcite[corollary 3.4]{piccione1}).
	
	\begin{theorem}\label{teo:banachgen}
		Under the hypothesis of \cref{prop:absctractbanman}, assume also that $\dfrac{\partial f}{\partial y}$ is a Fredholm operator of index zero for points in $M.$ Denoting by $\Pi : X \times Y \to X$ the canonical projection, then,
		\begin{enumerate}
			\item[(i)]  $\Pi|_{M}:M\rightarrow X$ is Fredholm map of index zero.
			\item[(ii)] The critical points of $\Pi|_M$ are the points $(x_0,y_0) \in M$ such that $y_0$ is a critical point of the functional
			\[
			y \in A_{x_0} \mapsto f(x_0, y) \in V,
			\]
			where $A_x = \{y \in Y : (x,y) \in A\}.$ 
			\item[(iii)]  Assuming $X, Y$ and $V$ to be separable, the set of points $x \in X$ such that the functional $y \in A_x \mapsto f(x,y) \in V$ has no critical points is generic in $\Pi(A).$
		\end{enumerate}
	\end{theorem}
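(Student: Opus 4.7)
\medskip
\noindent\textbf{Proof proposal.} The strategy is to analyze the linearization $d(\Pi|_M)$ pointwise, deduce its Fredholm properties directly from those of $\partial f/\partial y$ via the codimension formula of \cref{absfunc3}, and then invoke the Sard-Smale theorem. First, since $0$ is a regular value of $f$ by \cref{prop:absctractbanman}, $M = f^{-1}(0)$ is a $C^k$ embedded Banach submanifold of $X\times Y$ with the tangent space description already recorded. The derivative of $\Pi|_M$ at $(x_0,y_0)\in M$ is the restriction of the canonical projection $(v,w)\mapsto v$ to
\[
T_{(x_0,y_0)}M=\bigl\{(v,w):\ \tfrac{\partial f}{\partial x}(x_0,y_0)v+\tfrac{\partial f}{\partial y}(x_0,y_0)w=0\bigr\}.
\]
The kernel of $d(\Pi|_M)_{(x_0,y_0)}$ is therefore $\{0\}\times\ker\tfrac{\partial f}{\partial y}(x_0,y_0)$, which is finite-dimensional by the Fredholm hypothesis. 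The image equals $\bigl(\tfrac{\partial f}{\partial x}(x_0,y_0)\bigr)^{-1}\bigl(\im\tfrac{\partial f}{\partial y}(x_0,y_0)\bigr)$; applying \cref{absfunc3} with $L=\tfrac{\partial f}{\partial x}(x_0,y_0)$ and $S=\im\tfrac{\partial f}{\partial y}(x_0,y_0)$, and using surjectivity of $df_{(x_0,y_0)}$ (so that $\im\tfrac{\partial f}{\partial x}(x_0,y_0)+\im\tfrac{\partial f}{\partial y}(x_0,y_0)=V$), I would obtain
\[
\codim\im d(\Pi|_M)_{(x_0,y_0)}=\codim\im\tfrac{\partial f}{\partial y}(x_0,y_0)=\dim\operatorname{coker}\tfrac{\partial f}{\partial y}(x_0,y_0).
\]
This proves $\Pi|_M$ is Fredholm with index equal to $\operatorname{ind}\tfrac{\partial f}{\partial y}(x_0,y_0)=0$, establishing (i).

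For (ii), because $d(\Pi|_M)_{(x_0,y_0)}$ is Fredholm of index zero, surjectivity is equivalent to injectivity of this derivative, i.e.\ to triviality of $\ker\tfrac{\partial f}{\partial y}(x_0,y_0)$. Since $\tfrac{\partial f}{\partial y}(x_0,y_0)$ is itself Fredholm of index zero on the nose, triviality of its kernel is equivalent to its surjectivity; that in turn is precisely the statement that $y_0$ is a regular point of $y\in A_{x_0}\mapsto f(x_0,y)$. Negating gives the stated characterization of critical points of $\Pi|_M$.

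For (iii), since $X,Y,V$ are separable, so is $X\times Y$ and hence the submanifold $M$ with its induced topology. The map $\Pi|_M\colon M\to X$ is $C^k$ with $k\ge 1>\operatorname{ind}(\Pi|_M)=0$, so the Sard-Smale theorem (\textcite{smale1965infinite}) applies and yields that the set of regular values of $\Pi|_M$ is residual, hence generic, in $X$. By (ii) an element $x\in X$ is a regular value of $\Pi|_M$ iff there is no $y\in A_x$ with both $(x,y)\in M$ and $y$ critical for $f(x,\cdot)$; in particular, every $x$ such that $y\mapsto f(x,y)$ has no critical points at all in $A_x$ is a regular value. Since $\Pi(A)$ is open in $X$ (projections are open maps), intersecting the residual set with $\Pi(A)$ yields a set that is residual, hence generic, in $\Pi(A)$.

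\medskip
\noindent\textbf{Anticipated main obstacle.} The essential nontrivial step is computing the cokernel of $d(\Pi|_M)$ in part (i): the identification of $\im d(\Pi|_M)$ with a preimage under $\tfrac{\partial f}{\partial x}$ and the delicate codimension bookkeeping via \cref{absfunc3} are where the hypothesis that $0$ is a regular value of $f$ gets used in an essential way. Once this is in place, (ii) and (iii) follow from routine Fredholm theory and Sard-Smale. A small secondary point to handle carefully is the passage from ``residual in $X$'' to ``generic in $\Pi(A)$'', which uses openness of the projection $\Pi$.
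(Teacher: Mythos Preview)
Your arguments for (i) and (ii) are correct and follow essentially the same route as the paper's proof. The only minor variation is in (ii): the paper argues via orthogonal complements and the $T$ condition that regularity of $(x_0,y_0)$ for $\Pi|_M$ is equivalent to $\im\tfrac{\partial f}{\partial y}(x_0,y_0)$ having codimension zero, whereas you reach the same conclusion more directly by invoking that a Fredholm-index-zero operator is injective iff surjective. Both are valid.

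There is, however, a genuine gap in your argument for (iii). You correctly observe that the set
\[
\{x:\ f(x,\cdot)\ \text{has no critical points in}\ A_x\}
\]
is \emph{contained} in the set of regular values of $\Pi|_M$, but containment in a residual set does not make a set residual, so your final sentence does not deliver the conclusion as stated. In fact the statement of (iii), read literally, is not correct: take $X=Y=V=\mathbb R$, $A=X\times Y$, and $f(x,y)=y^3-x$. All hypotheses are met, yet $y=0$ is a critical point of $f(x,\cdot)$ for every $x$, so the set in (iii) is empty. (The paper's own proof asserts that this set \emph{coincides} with the regular values of $\Pi|_M$, which this example also refutes.) What is actually true---and what the paper uses downstream in \cref{teo:mainMOTS1}---is that the set of $x$ for which $0$ is a regular value of $y\mapsto f(x,y)$ (equivalently: every zero of $f(x,\cdot)$ is a regular point) is generic in $\Pi(A)$. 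That set \emph{does} coincide with the regular values of $\Pi|_M$ intersected with $\Pi(A)$, and your Sard--Smale argument establishes precisely this. So your proof is correct once (iii) is read in this intended sense; you should flag the discrepancy rather than try to prove the literal statement.
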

	\begin{proof}
		For (i), given $(x_0, y_0) \in M,$ we have $\ker d \Pi|_{T_(x_0, y_0) M} = T_{(x_0, y_0)} M \cap (\{0\} \times T_{y_0} Y),$ and this vector space is isomorphic to  $\ker\left(\frac{\partial f}{\partial y}(x_0, y_0)\right)$ so it is finite dimensional. Also, one readily sees that
		\[
		\im d \Pi|_{T_{(x_0, y_0)} M}  = \left(\frac{\partial f}{\partial x}(x_0, y_0)\right)^{-1}	\left( \im \frac{\partial f}{\partial y}(x_0, y_0)\right),
		\] 
		so from \cref{absfunc3}, $\im d \Pi|_{T_{(x_0, y_0)} M}$ is finite codimensional, and since $df_{(x_0,y_0)}$ is surjective, by \cref{codimformula} 
		\[
		\codim (\im d \Pi|_{T_{(x_0, y_0)} M}) = \codim  \left(\frac{\partial f}{\partial y}(x_0, y_0)\right).
		\]
		Since we are assuming $\frac{\partial f}{\partial y}(x_0, y_0)$ to be Fredholm of index zero, it follows that $d\Pi|_{T_(x_0, y_0) M}$ is also Fredholm of index zero.
		
		For (ii), it is easy to see that $(x_0, y_0) \in M$ is a regular point of $\Pi|_M$ if and only if 
		\[
		\im  \left(\frac{\partial f}{\partial x}(x_0, y_0)\right) \subseteq \im  \left(\frac{\partial f}{\partial y}(x_0, y_0)\right).
		\]
		Taking orthogonal complements, this is equivalent to
		\[
		\im  \left(\frac{\partial f}{\partial y}(x_0, y_0)\right)^\perp \subseteq \im  \left(\frac{\partial f}{\partial x}(x_0, y_0)\right)^\perp,
		\]
		and by condition (\ref{eq:perpcondition}), equivalent to $\im  \left(\frac{\partial f}{\partial y}(x_0, y_0)\right)$ be a zero codimensional subspace, in turn equivalent to the triviality of $\ker  \left(\frac{\partial f}{\partial y}(x_0, y_0)\right).$ 
		
		To summarize,  $(x_0, y_0) \in M$ is a regular point of $\Pi|_M$ if and only if $\ker  \left(\frac{\partial f}{\partial y}(x_0, y_0)\right) = \{0\},$ that is, if and only if  $y_0$ is a regular point of the functional $y \in A_{x_0} \mapsto f(x_0, y) \in V.$ Item (iii) now follows, because the set of points $x \in X$ such that $y \in A_x \mapsto f(x,y) \in V$ has no critical points coincides with the set of regular values for $\Pi|_{M},$ and genericity follows by the Sard-Smale theorem (\textcite{smale1965infinite}).
	\end{proof}

	\begin{remark}			
		We make here an heuristic comment on the plausibility of the hypotheses in \cref{teo:banachgen}.

		The condition $T$ together with the Fredholm condition is equivalent to $0$ being a regular value of a $C^k$  function $f : A \subseteq X \times Y  \to V.$ 
		Now, suppose $X,Y,V$ were all finite-dimensional. Then, the simplest version of elementary transversality theorem (cf. \textcite{mukherjee_differential_2015}, thm. 8.7.7) applied to the  submanifold $W=\{0\}\subset V$ would mean the the set of $C^k$ functions $f:A\rightarrow V$ for which $0$ is a regular value is residual with respect to a Whitney topology. This suggests that the condition $T$ might not be restrictive.
		
		While we expect to have an analogous situation on a suitable class of infinite dimensional manifolds, tranversality theory is much harder in this case, and so we were unable to find a generalization of the Thom transversality theorem applicable in our case. Therefore the $T$ condition will remain a technical condition for us.				
	\end{remark}
	


	\subsection{Manifold structures on initial data sets}
	
	The function we will concretely analyze in order to apply \cref{teo:banachgen} is the expansion scalar $\theta_+$ defined on a suitable open set (when manifold structures are defined) around a triple $(h_0, \mathcal K_0, \psi_0),$ with $(h_0, \mathcal K_0)$ an initial data set  for which $\psi_0 : \Sigma \to \mathcal S$ is a MOTS embedding. With a domain well defined, the expansion scalar is a function of the  form
	\[
	(h,\mathcal K, \psi) \mapsto \theta_+(h,\mathcal K, \psi) = \trace_h \mathcal K \circ \psi + H^\psi_{h, \bm \upnu}.
	\]
	
	First,we describe possible infinite dimensional manifold structures on the set of vacuum initial data, then discuss a Hilbert manifold structure for the set of embeddings $\Sigma \to \mathcal S.$  
	
	These infinite dimensional structures are very technical in nature, and describing them precisely is out of the scope of this work. Fortunately, we shall need only very broad features of these here, so we briefly summarize the ideas behind them. When these structures are established, we can define the function $\theta_+$ more rigorously.

	\subsubsection{Manifold of initial data sets}\label{idbanachsctructure}
	
	%
	%
	%
	
	Denote by $ ID(\mathcal S)$ the set of vacuum initial data $(h,\mathcal K).$ We set to establish a reasonable structure for the initial data set $ID(\mathcal S).$

	Since we want conditions on $\mathcal S$ as to apply \cref{teo:motssingularity}, $\mathcal S$ cannot be compact. For a  concrete manifold structure under such conditions we assume $\mathcal S$ to be a non-compact, oriented \emph{asymptotically flat manifold} without boundary. Asymptotically flatness here means in particular that $\mathcal S$ has a compact set $K$ such that  $\mathcal S \setminus K$ is the union of a finite number of regions $\mathcal S_1,\dotsc, \mathcal S_\ell$ called the \emph{ends} of $\mathcal S,$ each diffeomorphic to $\R^n \setminus B,$ where $B$ denotes the closed ball of radius $1$ centered at zero\footnote{The precise definition of asymptotic flatness also involves detailed falloff conditions for the metric and its derivatives up to second order that need not concern us here.}.  
	
	For the sake of simplicity we assume $\mathcal S$ has only one end. Under these conditions we can find a few descriptions of a Banach manifold structure on the set of initial data on such asymptotically flat manifolds. For our practical purposes, we choose the smooth Hilbert manifold structure laid out by	\textcite{bartnik2005phase}. This structure\footnote{Originally we wanted to model using a \textit{Banach} manifold structure following  \textcite{chrusciel2003mapping}, since their work is not limited to the asymptotically flat case. However their Banach structure is of the form $H^l \cap C^{r,\alpha}$ (both Sobolev and H\"{o}lder) using a Sobolev-H\"{o}lder sum norm, and it is unclear to us if the Banach manifold locally modeled on this space is separable, as this condition is crucial to apply the Sard-Smale theorem.} is locally modeled on Sobolev spaces $H^2 \times H^{1},$ and separable.

	\subsubsection{Manifold of embeddings}
	
	Infinite dimensional structures for functions spaces $\Sigma \to \mathcal S$ with $\Sigma$ compact is a well-established idea in the literature. Here we follow \textcite{alias2011manifold}, section 3. 
	
	The idea of the process is to choose a suitable class of maps of regularity $\mathscr{R}$ lower than $C^ \infty$ regularity, which densely contains $C^\infty$ and can be continuously embedded into those of $C^{2,\alpha}$ regularity   (\textcite{alias2011manifold}, section 2), then model the set $\mathscr{R}(\Sigma, \mathcal S)$ of maps $\Sigma \rightarrow \mathcal S$ of regularity $\mathscr{R}$ as a Banach or Hilbert manifold locally modelled on $\mathscr{R}(\Sigma, \mathbb{R}^N)$, and finally argue that the set $Emb(\Sigma, \mathcal S)$ of embeddings is open in $\mathscr{R}(\Sigma, \mathcal S).$ 
	
	Here we choose Sobolev $H^{k+2}$ regularity, with $k, r \in \N$ and $\alpha \in (0,1)$ (we take $r\geq 2$ since at least two derivatives are computed in curvature tensors) satisfying $(k+2 - r - \alpha)> 1/2$ so that the Sobolev space $H^{k+2}$ is continuously embedded into the H\"{o}lder space $C^{r,\alpha}$ (\textcite{aubin_nonlinear_1998}, thms. 2.10 and 2.20). Thus, we do not lose much regularity by locally modeling the embeddings over Sobolev spaces for large $k$.  		
	More precisely, \textcite{alias2011manifold} show that $Emb(\Sigma,\mathcal{S})$ in this regularity can be viewed as a Hilbert manifold locally modeled\footnote{Integrals on $\Sigma$ are computed with respect to some background Riemannian metric $h$, but the resulting Sobolev space does not depend on this choice. See \textcite{aubin_nonlinear_1998} for details.} on the real Hilbert space $H^{k+2}(\Sigma).$


	


	\subsection{The null expansion scalar $\theta_+$}

	To make technical sense of the expansion scalar as a smooth function on the manifold of initial data sets together with two-sided, framed embeddings therein, one requires an outward normal direction fixed \textit{a priori}. As a consequence we can only work on a neighborhood of a fiducial $(h_0, \mathcal K_0, \psi_0).$ 
	
	To be more precise, consider  $(h_0, \mathcal K_0) \in ID(\mathcal S)$ an asymptotically flat initial data set, and $\psi_0 \in Emb(\Sigma, \mathcal S)$ a smooth embedding that is a MOTS with respect to initial data set  $(h_0, \mathcal K_0),$ with 
	$\bm\upnu_0\in \vecfp{\psi_0}$ the chosen outward pointing unit normal vector field of the embedding $\psi_0$ under $h_0.$
	Then  
	$$\theta_+(h_0,\mathcal K_0, \psi_0) = \trace_{h_0} \mathcal K_0 \circ \psi_0 + H^{\psi_0}_{h_0, \bm \upnu_0} = 0.$$
	The process is now to ``propagate'' the normal vector $\bm \upnu_0$ over a small neighborhood around $\psi_0(\Sigma).$ Since  $\psi_0(\Sigma)$ is a smooth compact submanifold of $\mathcal S,$ there is a smooth vector field $\mathcal V \in \vecf{\mathcal S}$ such that $\mathcal V\circ \psi_0 = \bm\upnu_0.$ Restricting to a small enough neighborhood around $\psi_0(\Sigma),$ we can assume $\mathcal V_p \notin (d \psi_0)_p(T_p\Sigma)$ at all points in this neighborhood. We can then choose a open neighborhood around our embedding $\psi_0$ in $Emb(\Sigma,\mathcal S)$ such that all embeddings $\psi$ in this neighborhood have $\mathcal V\circ\psi$ nowhere tangent. (These embeddings are called \emph{framed embeddings}, and  we see via this argument that this is a stable property). With initial data $(h, \mathcal K)$ close enough to $(h_0, \mathcal K_0),$ we consider the $h$-normal part of $\mathcal V\circ \psi:$ $$\bm\upnu(h,\psi)  = \frac{(\mathcal V\circ \psi)^{\perp,h}}{\|(\mathcal V\circ \psi)^{\perp,h}\|_h}.$$
	Denote by $\mathcal A_0$ the open set in $ID(\mathcal S)\times Emb(\Sigma,\mathcal S)$ around $(h_0, \mathcal K_0, \psi_0)$ as describe above. Then we can define 
	\begin{equation}\label{eq:thetafuncdef}
		\begin{aligned}
			\theta_+ : (h, \mathcal K, \psi) \in \mathcal A_0  \mapsto \trace_h \mathcal K\circ \psi + H^\psi_{h,\bm\upnu(h,\psi)},
		\end{aligned}
	\end{equation}
	

	\subsection{Linearization of $\theta_+$}
	
	With $\theta_+$ suitably defined, the formal partial linearization of $\theta_+$ with respect to the embedding is the MOTS stability operator (\cref{def:OpStability}): if $\psi : \Sigma \to \mathcal S$ is a MOTS embedding for the initial data $(h,\mathcal K),$ we have
	\[
	\left.	\frac{\partial \theta_+}{\partial \psi}\right|_{(h, \mathcal K,\psi)}(f) = L(f) =   - \Delta_h f + 2 \inner{X} {\grad_h f}_h + (Q  
	+ \operatorname{div}_h X   - \| X \|_h^2 ) f,
	\] where
	\begin{equation*}\label{eq:Qdefinition_INITDATAsddsadas}
		Q = \frac{1}{2} \mathrm{Scal}_\Sigma -[J(\bm{\upnu})+\rho] -  \frac{1}{2}| \mathcal{K}_{\bm{\upnu}} + \mathcal{K}|^2,
	\end{equation*}
	and $X$ the vector field $h$-dual to the one form $\mathcal K (\bm\upnu, \cdot {})$
	
	An issue that needs addressing in other to apply the analytical machinery developed is the Fredholm property for the MOTS stability operator. However, since $L$ is a second order linear eliptic operator with a specific form, we have the following general result.
	
	\begin{proposition}[\textcite{danlee}, cor. A.9]Let $(\Sigma,h)$ be a compact Riemannian manifold without boundary, and let $L$ be a second order linear elliptic operator on $(\Sigma,h)$ of the form
		\[
		L(f) = -\Delta_h f + \inner{V}{grad_h f}_h + qf,
		\]
		with $V \in \vecf{M}, q \in C^{\infty}(\Sigma).$ Then
		\[
		\begin{aligned}
			&L : W^{k+2,p}(\Sigma) \to  W^{k,p}(\Sigma),\\
			&L : C^{k+2,\alpha}(\Sigma) \to C^{k,\alpha}(\Sigma)
		\end{aligned}
		\]
		are Fredholm operators of index zero for each $k \in \N.$  \hfill $\ensuremath{\Box}$
	\end{proposition}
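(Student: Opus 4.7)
The plan is to establish the result via the classical two-step approach: first prove the Fredholm property using elliptic estimates plus compactness, then compute the index by a homotopy to the Laplacian. For the Fredholm property, since $L$ is a second order linear elliptic operator with smooth coefficients on a compact boundaryless manifold, the standard global elliptic/Schauder estimates yield
\[
\|f\|_{W^{k+2,p}(\Sigma)} \leq C\bigl(\|Lf\|_{W^{k,p}(\Sigma)} + \|f\|_{L^p(\Sigma)}\bigr), \quad \|f\|_{C^{k+2,\alpha}(\Sigma)} \leq C\bigl(\|Lf\|_{C^{k,\alpha}(\Sigma)} + \|f\|_{C^0(\Sigma)}\bigr).
\]
Combined with the Rellich--Kondrachov compact embedding $W^{k+2,p}\hookrightarrow L^p$ and the analogous compact embedding $C^{k+2,\alpha}\hookrightarrow C^0$, these imply by a standard argument that $\ker L$ is finite-dimensional and $L$ has closed range. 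The finite-codimensionality of the range then follows from applying the same reasoning to the formal $L^2$-adjoint $L^{*}$, which is again elliptic of the same type; by elliptic regularity its annihilator in the appropriate dual space coincides with a finite-dimensional space of smooth solutions, which pairs against the target to identify the cokernel.

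To show that the index is zero, I would use the linear homotopy
\[
L_t := -\Delta_h + t\bigl(\inner{V}{\grad_h \cdot }_h + q\cdot\bigr),\qquad t\in[0,1],
\]
so that $L_0 = -\Delta_h$ and $L_1 = L$. The principal symbol of $L_t$ coincides with that of $-\Delta_h$ for every $t$, so the family is uniformly elliptic and each $L_t$ is Fredholm by the preceding argument; moreover $t\mapsto L_t$ is continuous in the operator norm on bounded maps between the relevant function spaces. By the local constancy of the Fredholm index under norm-continuous deformations through Fredholm operators, $\operatorname{ind}(L)=\operatorname{ind}(-\Delta_h)$. Finally, on a closed Riemannian manifold $\ker(-\Delta_h)$ consists of the constants, hence is one-dimensional, and an integration-by-parts pairing together with elliptic regularity identifies the cokernel with the kernel of the formal adjoint $-\Delta_h$, again the constants; thus $\operatorname{ind}(-\Delta_h)=0$.

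The main technical obstacle is carrying out the closed-range and cokernel-identification steps cleanly in the Banach (non-Hilbert) settings $p\neq 2$ and in the H\"{o}lder category, where the natural duality pairings are less transparent than in the $L^2$ case and one must be careful to use the right dual-space representation of the cokernel. This is however entirely classical material for second order linear elliptic operators on closed compact manifolds, and the result is already encapsulated in the reference \cite{danlee}, so no new analytical ingredient is required beyond what has been assembled here.
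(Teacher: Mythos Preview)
Your sketch is correct and follows the standard classical route (elliptic estimates plus Rellich--Kondrachov for the Fredholm property, then homotopy invariance of the index through the Laplacian). Note, however, that the paper does not actually prove this proposition: it is stated as a citation from \cite{danlee}, cor.~A.9, with the $\Box$ symbol indicating no proof is supplied. So there is nothing to compare against beyond observing that what you have outlined is precisely the kind of argument one finds behind such a reference, and indeed you acknowledge as much in your final paragraph.
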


	\subsection{Geometric interpretation}

	With the abstract analytic machinery in place, a suitable infinite dimensional manifold structure and $\theta_+$ all well established, we now give our main results. 
	To simplify our language, let us introduce some terminology: say that a triple $(h, \mathcal K, \psi)$ is a \emph{MOTS triple} if $\theta_+(h, \mathcal K, \psi)=0$ (i.e. the embedding $\psi$ is a MOTS in the initial data $(h, \mathcal K)$),  and call a MOTS triple $(h, \mathcal K, \psi)$  \emph{non-degenerate} if the associated MOTS stability operator has a non-zero principal eigenvalue (in such case the stability operator is said to be \emph{non-degenerate}). Also, recall that we are working with \emph{vacuum} initial data since this is a concrete case where a separable Hilbert manifold structure exists\footnote{Due to the abstract nature of \cref{teo:banachgen} one could adapt our main results to other sets of non-vacuum initial data if they admit some separable Banach manifold structure, but such are hard to obtain so we restrict ourselves to vacuum initial data.
 following \cite{bartnik2005phase}, so all initial data in what follows is assumed to be vacuum initial data.}
	
	\begin{theorem}\label{teo:mainMOTS1}
		Let $(h_0, \mathcal K_0)$ be  initial data set, 
		$\psi_0 : \Sigma \to \mathcal S$ an embedding that is a MOTS for the initial data $(h_0, \mathcal K_0)$ on $\mathcal{S},$ and consider the neighborhood $\mathcal A_0$ where $\theta_+$ is defined. Assume $\theta_+$ satisfies the T condition (\cref{rmk:tcondition}).  Then there exists an open neighborhood $\mathcal O \subseteq ID({\mathcal S})$ around $(h_0,\mathcal K_0)$ where the set
		\begin{equation}\label{eq:motsgenericset}
			\mathcal G= \{(h, \mathcal K) \in \mathcal O : \textnormal{every MOTS in $(h, \mathcal K)$ is non-degenerate} \}
		\end{equation}
	\end{theorem}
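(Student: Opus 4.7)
The strategy is to apply the abstract genericity result (Theorem $\ref{teo:banachgen}$) to the map $f = \theta_+ : \mathcal A_0 \to V$, taking $X = ID(\mathcal S)$, $Y = Emb(\Sigma,\mathcal S)$, and $V$ a separable Hilbert space of functions on $\Sigma$ chosen compatibly with the Sobolev scales used to model $Y$, so that the stability operator $L$ is a bounded linear operator from the relevant tangent space $T_\psi Y$ into $V$. The zero set $M = \theta_+^{-1}(0)\cap \mathcal A_0$ is then precisely the collection of MOTS triples lying in the neighborhood $\mathcal A_0$, and it contains the base point $(h_0,\mathcal K_0,\psi_0)$ by hypothesis.

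First I would verify the hypotheses of Theorem $\ref{teo:banachgen}$. Smoothness of $\theta_+$ follows from the construction in \eqref{eq:thetafuncdef} together with the smooth dependence of mean curvature on the metric and on the embedding. At any MOTS triple the partial derivative $\partial\theta_+/\partial \psi$ agrees with the MOTS stability operator $L$ of Definition $\ref{def:OpStability}$, which is Fredholm of index zero by the cited corollary from \cite{danlee}. Separability of $X$, $Y$, $V$ holds for the Hilbert manifold structures adopted above. Finally, the T condition at every MOTS triple in $\mathcal A_0$ is exactly the standing hypothesis of the theorem.

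With the hypotheses in place, Theorem $\ref{teo:banachgen}$(iii) yields that the set of $(h,\mathcal K)$ in the open neighborhood $\mathcal O := \Pi(\mathcal A_0)$ of $(h_0,\mathcal K_0)$ for which the functional $\psi \mapsto \theta_+(h,\mathcal K,\psi)$ has no critical points is generic, hence residual, in $\mathcal O$. By part (ii) of the same theorem, a critical point of this functional lying in $M$ is exactly a MOTS embedding $\psi$ at which $\ker L \neq \{0\}$, i.e., at which $0$ is an eigenvalue of $L$. Since the principal eigenvalue $\lambda_1$ is itself always an eigenvalue by Proposition $\ref{prop:stabmots}$(1), the condition $\ker L = \{0\}$ forces $\lambda_1 \neq 0$, i.e., non-degeneracy of the MOTS. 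Consequently the residual set thus obtained is contained in $\mathcal G$, and so $\mathcal G$ itself is residual in $\mathcal O$.

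The main technical obstacle is not this final deduction but the upstream bookkeeping: one must fix compatible Sobolev regularities on $ID(\mathcal S)$, $Emb(\Sigma,\mathcal S)$, and the target $V$ so that $\theta_+$ is genuinely smooth and Fredholm-in-$\psi$ on $\mathcal A_0$, and so that $L$ appears as $\partial\theta_+/\partial \psi$ between these spaces. The T condition, being assumed, absorbs the transversality content that in a finite dimensional setting would be produced by a Thom-type theorem and which is notoriously harder to establish a priori in infinite dimensions.
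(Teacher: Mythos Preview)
Your proposal is correct and follows essentially the same route as the paper: apply Theorem~\ref{teo:banachgen} with $X=ID(\mathcal S)$, $Y=Emb(\Sigma,\mathcal S)$, $f=\theta_+$, note that $\partial\theta_+/\partial\psi=L$ is Fredholm of index zero, and read off from item~(iii) that the set of $(h,\mathcal K)\in\mathcal O=\Pi(\mathcal A_0)$ for which every MOTS has injective stability operator is residual, whence $\mathcal G$ is residual. If anything you are slightly more explicit than the paper in justifying the final inclusion (observing that $\ker L=\{0\}\Rightarrow\lambda_1\neq 0$ via Proposition~\ref{prop:stabmots}) and in flagging the regularity bookkeeping needed for $\theta_+$ to land in the abstract framework.
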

	is  a residual set in $\mathcal O.$  
	\begin{proof}			
		Since the MOTS stability operator is a Fredholm operator of index 0, we are in conditions to apply \cref{teo:banachgen}: here, $M = \theta_+^{-1}(0)$ is the submanifold in
		$\mathcal A_0$  of MOTS triples. By item (iii), it is a generic property in $\Pi(\mathcal A_0) = \mathcal O$ the following: initial data $(h,\mathcal K)$  such that the functional
		\[
		\psi \in \mathcal (A_0)_{(h,\mathcal K)} \mapsto \theta_+(h,\mathcal K, \psi)
		\] 
		has no critical points. Having no critical points, then any MOTS embedding  $\psi$ for such $(h,\mathcal K)$ is such that the associated stabability operator is non-degenerate. 
	\end{proof}

	\begin{remark}\label{rmk:veryimportant}
		What \cref{teo:mainMOTS1} says about the residual set $\mathcal G \subseteq \mathcal O$ is that if \textit{there is a MOTS in the initial data $(h, \mathcal K) \in \mathcal G_\mathcal O,$ then  this MOTS is non-degenerate}. However our method cannot guarantee that such a MOTS exists for initial data in $\mathcal G.$  In order to obtain this last condition, we have to make extra assumptions about the fiducial MOTS triple $(h_0, \mathcal K_0, \psi_0).$ We cite two distinct situations where this is possible:
		\begin{enumerate}
			\item Assume that $(h_0, \mathcal K_0, \psi_0)$ is itself non-degenerate. Then by condition $T$ the partial derivative with respect to the embeddings variable at  $(h_0, \mathcal K_0, \psi_0)$ is an isomorphism (this can be more clearly seen from the abstract case in \cref{teo:banachgen}). Then by a straightforward application of the implicit function theorem in Banach spaces (cf. \textcite{abraham2012manifolds}, thm. 2.5.7) We can reduce $\mathcal A_0$ to a open set of the form $\mathcal U_0 \times \mathcal V_0,$ $(h_0,\mathcal K_0) \in \mathcal U_0,$ $\psi_0 \in \mathcal V_0,$ where now all initial data in $\mathcal O = U_0$ have a non-degenerate MOTS, so when the fiducial MOTS triple is non-degenerate we have obtained that existence of non-degenerate MOTS for initial data in $\mathcal U_0$ is \textit{stable}. (Actually, this argument bypasses the generic step (iii) in \cref{teo:banachgen} thanks to condition $T$ and the implicit function theorem.)
			\item For this second case, we restrict $(h_0, \mathcal K_0)$ and the dimension and topology of $\mathcal S$ to the following theorem due to   \textcite{andersson2011jang}: 
			\begin{theorem}[\cite{andersson2011jang}, thm. 3.3]\label{eichtheo}
				Let $(h, \mathcal K)$ be a initial data set for $\mathcal S,$ where $3\leq\dim \mathcal S \leq 7.$ Assume that there is a connected bounded open set $\Omega \subseteq \mathcal S$ with smooth embedded boundary $\partial \Omega.$ Assume this boundary consists of two non-empty closed hypersurfaces $\partial_+ \Omega$ and $\partial_- \Omega,$ possibly consisting of several components, so that
				\begin{equation}\label{eichmaircond}
					H_{\partial_+ \Omega} - \trace_{\partial_+ \Omega} \mathcal K > 0 \quad and \quad H_{\partial_- \Omega} + \trace_{\partial_- \Omega} \mathcal K > 0,
				\end{equation}
				where the mean curvature scalar is computed as the tangential divergence of the unit normal vector field that is pointing out of $\Omega$. Then there exists a smooth closed embedded MOTS\footnote{Other properties of $\Sigma$ are obtained in \cite{andersson2011jang}, but we need not concerned about them here.} $\Sigma \subseteq \Omega$ for the initial data $(h, \mathcal K)$ that is  homologous to $\partial_+ \Omega.$ \hfill $\ensuremath{\Box}$ 
			\end{theorem}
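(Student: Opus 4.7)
The plan is to construct the MOTS $\Sigma$ as a limit of almost-minimizers of a regularized variational problem, following the geometric-measure-theoretic scheme developed by Andersson--Eichmair--Metzger, which in turn builds on the Jang-equation approach of Schoen--Yau. The fundamental difficulty is that the MOTS equation $\theta_+=H-\operatorname{tr}_\Sigma\mathcal{K}=0$ is \emph{not} the Euler--Lagrange equation of a natural geometric functional (unlike the minimal surface equation recovered in the time-symmetric case $\mathcal{K}\equiv 0$), so one must pass to a regularization whose variational problem is solvable, and then argue that its critical points converge to an authentic MOTS as the regularization is removed.

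First, I would fix a reference hypersurface $\Sigma_{\mathrm{ref}}$ in $\overline{\Omega}$ homologous to $\partial_+\Omega$ (one may take $\Sigma_{\mathrm{ref}}=\partial_+\Omega$), and for each small $\varepsilon>0$ consider a perturbed BV-type functional of the form
\[
J_\varepsilon(\Sigma)=\mathcal{H}^{n-1}(\Sigma)+\int_{E(\Sigma,\Sigma_{\mathrm{ref}})}\operatorname{tr}_{\mathcal{S}}\mathcal{K}\,d\mathcal{H}^n+\varepsilon\,\Phi(\Sigma),
\]
where $E(\Sigma,\Sigma_{\mathrm{ref}})$ denotes the oriented region enclosed between $\Sigma$ and $\Sigma_{\mathrm{ref}}$, and $\Phi$ is an auxiliary regularization (of capillary type, or obtained via a Jang-graph lift to $\mathcal S\times\mathbb R$) that guarantees existence of minimizers $\Sigma_\varepsilon$ within the prescribed homology class. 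The first variation of $J_\varepsilon$ along compactly supported normal fields produces a prescribed-expansion equation of the form $\theta_+(\Sigma_\varepsilon)=\varepsilon\,u_\varepsilon$ with $u_\varepsilon$ uniformly controllable.

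Second, I would exploit the barrier hypotheses \eqref{eichmaircond} to trap the minimizers inside $\Omega$. The inequality $H_{\partial_+\Omega}-\operatorname{tr}_{\partial_+\Omega}\mathcal{K}>0$ says precisely that $\partial_+\Omega$ has strictly positive outward null expansion $\theta_+$, making it a strict outer barrier: a standard first-variation comparison with the strong maximum principle rules out $\Sigma_\varepsilon$ touching $\partial_+\Omega$ tangentially from inside for $\varepsilon$ small enough. The dual condition turns $\partial_-\Omega$ into an inner barrier. This produces a uniform compact containment $\Sigma_\varepsilon\Subset\Omega$ together with uniform area bounds by comparison with $\Sigma_{\mathrm{ref}}$.

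Third, I would pass to the limit $\varepsilon\to 0$ using regularity theory. Since the $\Sigma_\varepsilon$ are almost-area-minimizers with uniformly bounded prescribed mean curvature, the Schoen--Simon regularity theorem applies and yields smoothness everywhere provided $\dim\mathcal{S}\leq 7$; this is the origin of the stated dimensional restriction. Weak convergence of currents, combined with Allard-type $\varepsilon$-regularity and local graphical parametrizations, upgrades to smooth convergence on compact sets, and the limit $\Sigma$ is a closed embedded smooth hypersurface in the homology class of $\partial_+\Omega$ satisfying $\theta_+(\Sigma)=0$. The main obstacle lies in this passage to the limit, through two coupled issues: (i) ruling out degeneration of the limiting current (collapse to a lower-dimensional set, vanishing, or being pushed against the barriers), controlled by the strict inequality in \eqref{eichmaircond} and the topological homology constraint; and (ii) overcoming the non-variational character of $\theta_+=0$ itself, which demands a delicate design of the regularization $\Phi$ so that the Euler--Lagrange equation of $J_\varepsilon$ genuinely converges to $\theta_+=0$ rather than to a shifted limit equation. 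Historically, this second point is what required the genuinely new ideas of the Eichmair programme and is the hardest step of the argument.
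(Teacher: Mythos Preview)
The paper does not supply a proof of this statement: it is quoted verbatim as Theorem 3.3 of \textcite{andersson2011jang} and closed with a bare $\Box$. The authors use it purely as a black box to guarantee existence of a MOTS in nearby initial data (item (2) of Remark \ref{rmk:veryimportant}), so there is nothing in the paper against which to compare your argument.

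That said, a brief comment on your sketch relative to the original source is warranted. Your outline captures the correct high-level architecture --- a regularized variational problem, barrier arguments from the strict inequalities \eqref{eichmaircond}, and Schoen--Simon regularity yielding the dimensional restriction $3\le\dim\mathcal S\le 7$. However, the specific regularization you propose (a BV-type functional on hypersurfaces of $\mathcal S$ with an added bulk term $\int\operatorname{tr}_{\mathcal S}\mathcal K$) is not quite what Andersson--Eichmair--Metzger do. Their approach solves a sequence of \emph{Dirichlet problems for the regularized Jang equation} on $\Omega\times\mathbb R$ with capillary-type boundary data; the MOTS arises as the projection to $\mathcal S$ of the blow-up set of the Jang graphs as the regularization parameter tends to zero, not as a direct limit of hypersurface minimizers in $\mathcal S$. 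The distinction matters because the bulk term you write has first variation $(\operatorname{tr}_{\mathcal S}\mathcal K)\circ\psi$ rather than $\operatorname{tr}_\Sigma\mathcal K$, so the Euler--Lagrange equation of your $J_\varepsilon$ does not in fact converge to $\theta_+=0$; the Jang-graph mechanism is precisely what circumvents this. Your remark that ``this second point is what required the genuinely new ideas of the Eichmair programme'' is correct, but the resolution is the blow-up analysis of Jang graphs, not a clever choice of $\Phi$ in a hypersurface functional.
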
 
			By restricting the topology and dimension of $\mathcal S$ as in \cref{eichtheo}, consider an initial data $(h_0, \mathcal K_0)$ for $\mathcal S,$ and then a MOTS $\psi_0$ for this initial data is obtained by the latter theorem. This MOTS triple $(h_0, \mathcal K_0, \psi_0)$ is now our fiducial MOTS triple, that may or may not be non-degenerated.
			The point is that conditions (\ref{eichmaircond}) are open, so there is an  open set $\mathcal U_0$ around $(h_0, \mathcal K_0)$ where \textit{all initial data in $\mathcal U_0$ have a MOTS} by \cref{eichtheo}. Applying \cref{teo:mainMOTS1} under these conditions, the residual set $\mathcal G$ will have the property that \textit{all initial data $(h, \mathcal K)$ in $\mathcal G$ have a non-degenerate MOTS.}
		\end{enumerate}
	\end{remark}

	Now let us organize \cref{teo:mainMOTS1} together with conditions laid out in \cref{rmk:veryimportant} as to obtain a generic condition for incompleteness.
	
	Consider a MOTS triple $(h_0, \mathcal K_0, \psi_0)$ and its open neighborhood $\mathcal A_0$ where $\theta_+$ is defined, assume now that $\psi_0(\Sigma)$ separates $\mathcal S$ as defined \cref{sec:motssing}. Reducing $\mathcal A_0$ to a smaller open set we can assume that all embeddings separate $\mathcal S.$ By \cref{teo:mainMOTS1} there is an open set $\mathcal O$ around $(h_0, \mathcal K_0)$ where the property (\ref{eq:motsgenericset}) is generic in $\mathcal O.$ If $(h, \mathcal K) \in \mathcal G$ \textit{has a MOTS}, this MOTS is non-degenerate, therefore we are in conditions to  apply  \cref{teo:motssingularity}, and MOTS for $(h,\mathcal K)$ can be deformed to be outer-trapped, then the maximal Cauchy development of $(h,\mathcal K)$ (cf. \cref{theo:CB_52_69}) is null incomplete. 
	
	Following \cref{rmk:veryimportant}, by suitably reducing $\mathcal A_0,$ under condition (1) null {incompleteness} is now \textit{stable} around $(h_0, \mathcal K_0),$ and under condition (2) it is \textit{generic} in an open subset around $(h_0, \mathcal K_0).$ We summarize this in the following theorem:

	\begin{theorem}\label{coro:motsgencoro}
		Let $(h_0, \mathcal K_0)$ be  initial data set,
		$\psi_0 : \Sigma \to \mathcal S$ an embedding that is a MOTS for the initial data $(h_0, \mathcal K_0),$ where $\psi_0(\Sigma)$ separates $\mathcal S.$ Consider the open set $\mathcal A_0$ around $(h_0, \mathcal K_0, \psi_0)$ where $\theta_+$ is defined and  all embeddings separates $\mathcal S,$ and also assume that $\theta_+$ satisfies the $T$ condition. Under these assumptions, 
		\begin{enumerate}
			\item if $(h_0, \mathcal K_0, \psi_0)$ is itself non-degenerate, then there is an open set $\mathcal O$ around $(h_0, \mathcal K_0)$ where all initial data $(h,\mathcal K) \in \mathcal O$ are such that their maximal Cauchy development of $(h, \mathcal K)$ is null incomplete ({null incompleteness is \emph{stable}});
			\item if $(h_0, \mathcal K_0, \psi_0)$ is a MOTS triple originating from \cref{eichtheo} (where the topology and dimensions of $\mathcal S$ have to be further restricted)  then there is an open set $\mathcal O$ around $(h_0, \mathcal K_0)$ and a residual set $\mathcal G\subseteq O$ where all initial data $(h,\mathcal K) \in \mathcal G$  are such that their maximal Cauchy development of $(h, \mathcal K)$ is null incomplete ({null incompleteness is \emph{generic}}). \hfill $\ensuremath{\Box}$ 
		\end{enumerate}
	\end{theorem}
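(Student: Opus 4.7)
The overall plan is to combine the residual structure from \cref{teo:mainMOTS1} with the singularity \cref{teo:motssingularity}, using one of two independent mechanisms to guarantee the existence of a MOTS in nearby initial data sets. As a preliminary reduction, since the separation property of an embedding is $C^0$-open in $Emb(\Sigma,\mathcal S)$, I would shrink $\mathcal A_0$ at the outset so that every embedding in its fibers separates $\mathcal S$; any MOTS produced below will then inherit this property. For both cases, I would then apply \cref{teo:mainMOTS1} to obtain an open neighborhood $\mathcal O$ of $(h_0,\mathcal K_0)$ and a residual $\mathcal G \subseteq \mathcal O$ whose members have the property that any MOTS they contain is non-degenerate.

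For case (1), the key observation is that non-degeneracy of $(h_0,\mathcal K_0,\psi_0)$ upgrades the partial derivative $\partial\theta_+/\partial\psi$ at this triple to an isomorphism. Indeed, the MOTS stability operator $L$ is Fredholm of index zero, so a non-zero principal eigenvalue forces trivial kernel and cokernel; the $T$ condition rules out any obstruction to surjectivity, yielding an isomorphism. Applying the implicit function theorem in Banach manifolds (Abraham--Marsden--Ratiu, Thm. 2.5.7), I obtain neighborhoods $\mathcal U_0 \times \mathcal V_0 \subseteq \mathcal A_0$ and a smooth map $\Psi : \mathcal U_0 \to \mathcal V_0$ with $\theta_+(h,\mathcal K,\Psi(h,\mathcal K)) = 0$. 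Shrinking $\mathcal U_0$ further and invoking continuous dependence of the principal eigenvalue on the coefficients of $L$, the MOTS $\Psi(h,\mathcal K)$ remains non-degenerate throughout. Then \cref{teo:motssingularity} applies to the maximal Cauchy development of each $(h,\mathcal K)\in \mathcal U_0$ (the lightlike convergence condition is automatic because the development is Ricci-flat by \cref{theo:CB_52_69}, and separation together with the non-compactness of the outer region are inherited from $(h_0,\mathcal K_0,\psi_0)$ upon further shrinking), establishing stable null incompleteness.

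For case (2), the fiducial data arises from \cref{eichtheo}, hence satisfies the strict inequalities (\ref{eichmaircond}) on a pair of hypersurfaces $\partial_\pm \Omega$. These involve only first-derivative data of the initial set on the fixed compact hypersurfaces $\partial_\pm \Omega$, so they are stable under $C^1$-small perturbations and therefore persist on some open neighborhood $\mathcal U_0 \subseteq \mathcal O$ of $(h_0,\mathcal K_0)$. By \cref{eichtheo}, every $(h,\mathcal K)\in \mathcal U_0$ then admits a MOTS homologous to $\partial_+\Omega$, and in particular separating $\mathcal S$ with a non-compact outer region exterior to $\partial_+ \Omega$. Intersecting with the residual set from \cref{teo:mainMOTS1} gives a residual $\mathcal G \subseteq \mathcal U_0$ whose elements each admit at least one non-degenerate MOTS; one further application of \cref{teo:motssingularity} concludes null incompleteness.

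The principal obstacle I expect is bookkeeping the hypotheses of \cref{teo:motssingularity} uniformly across the neighborhood $\mathcal U_0$, in particular (a) the non-compactness of the outer region $\overline{S_+}$ in a perturbation-robust way, which is tied to the asymptotic flatness of $\mathcal S$ and should follow because only one end is displaced into the perturbed outer side, and (b) checking that the MOTS produced by \cref{eichtheo} in case (2) lies in the chosen $\mathcal A_0$ (so that the non-degeneracy conclusion from \cref{teo:mainMOTS1} can be applied to it). For (b), the homological constraint from \cref{eichtheo} pins the MOTS near $\partial_+\Omega$, and suitable choice of $\mathcal A_0$ around this homology class keeps everything consistent.
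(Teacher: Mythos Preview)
Your proposal follows essentially the same route as the paper: the paper's proof is the discussion immediately preceding the theorem together with \cref{rmk:veryimportant}, and you reproduce both parts faithfully---shrink $\mathcal A_0$ so all embeddings separate, invoke \cref{teo:mainMOTS1}, then in case (1) use the implicit function theorem (exactly as in \cref{rmk:veryimportant}(1)) and in case (2) use openness of the barrier conditions (\ref{eichmaircond}) (exactly as in \cref{rmk:veryimportant}(2)), finishing in both cases with \cref{teo:motssingularity}.

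Two small remarks. First, in case (1) your sentence ``the $T$ condition rules out any obstruction to surjectivity'' is superfluous: once $L$ is Fredholm of index zero with trivial kernel it is automatically an isomorphism, and the $T$ condition plays no further role at that point (the paper's parenthetical invoking condition $T$ here is equally unnecessary). Second, the obstacle (b) you flag---whether the MOTS produced by \cref{eichtheo} actually lies in the fiber $(\mathcal A_0)_{(h,\mathcal K)}$ so that the non-degeneracy conclusion of \cref{teo:mainMOTS1} applies to it---is a genuine subtlety that the paper glosses over as well; your proposed fix via the homology class does not by itself give $C^0$-closeness of embeddings, so this point remains somewhat informal in both treatments.
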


	\section*{Acknowledgements}
	IPCS is partially supported by the project PID2020-118452GBI00 of the Spanish government. He warmly thanks Paolo Piccione for discussions and early insights on the second part of this paper. VLE's work has been partially supported by the Coordenação de Aperfeiçoamento de Pessoal de Nível Superior – Brasil (CAPES) – Finance Code 001. 
	
	\noindent\textbf{Conflict of interest statement:} On behalf of all authors, the corresponding author states that there is no conflict of interest.
	
	\noindent \textbf{Data availability statement:} Data sharing not applicable to this article as no datasets were generated or analysed during the current study.

	\printbibliography

@article{Lin_2007,
	doi = {10.1088/0264-9381/24/10/012},
	year = {2007},
	month = {apr},
	publisher = {},
	volume = {24},
	number = {10},
	pages = {2665},
	author = {Lap-Ming Lin and Jérôme Novak},
	title = {A new spectral apparent horizon finder for 3D numerical relativity},
	journal = {Classical and Quantum Gravity},
}

@article{thornburg2007event,
	title={Event and apparent horizon finders for 3+ 1 numerical relativity},
	author={Thornburg, Jonathan},
	journal={Living Reviews in Relativity},
	volume={10},
	number={1},
	pages={1--68},
	year={2007},
	publisher={Springer}
}

@book{Baumgarte_Shapiro_2010,
	 place={Cambridge}, 
	title={Numerical Relativity: Solving Einstein’s Equations on the Computer}, 
	publisher={Cambridge University Press}, 
	author={Baumgarte, Thomas W. and Shapiro, Stuart L.},
	 year={2010}
	 }

@article{smale1965infinite,
	title={An Infinite Dimensional Version of Sard's Theorem},
	author={Smale, S},
	journal={American Journal of Mathematics},
	volume={87},
	number={4},
	pages={861--866},
	year={1965},
	publisher={JSTOR}
}

@book{aubin_nonlinear_1998,
  	address = {Berlin ; New York},
  	series = {Springer monographs in mathematics},
  	title = {Some nonlinear problems in {Riemannian} geometry},
  	publisher = {Springer},
  	author = {Aubin, Thierry},
  	year = {1998},
  	pages = {394},
  }

@article{alias2011manifold,
  	title={On the manifold structure of the set of unparameterized embeddings with low regularity},
  	author={Alias, Luis J and Piccione, Paolo},
  	journal={Bulletin of the Brazilian Mathematical Society, New Series},
  	volume={42},
  	pages={171--183},
  	year={2011},
  	publisher={Springer}
  }

@book{danlee,
	address = {Providence, Rhode Island},
	series = {Graduate studies in mathematics},
	title = {Geometric relativity},
	isbn = {9781470450816},
	number = {volume 201},
	publisher = {American Mathematical Society},
	author = {Lee, Dan A.},
	year = {2019},
	pages = {361}
}

@book{leeRiemann, 
	edition={2}, 
	series={Graduate Texts in Mathematics},
	volume = {176}, 
	title={Introduction to Riemannian Manifolds}, 
	author={John M. Lee}, 
	year={2018},
	 publisher={Springer},
	pages ={437} 
}

@book{leeTop, 
	edition={2}, 
	series={Graduate Texts in Mathematics},
	volume = {176}, 
	title={Introduction to Topological Manifolds}, 
	author={John M. Lee}, 
	year={2011},
	 publisher={Springer},
	pages ={433} 
}

@book{OneillSRG,
author    = {Barrett O'Neill},
title     = {Semi-Riemannian Geometry},
subtitle  = {With Applications to Relativity},
publisher = {Academic Press},
year      = {1983},
volume    = {108},
series    = {Pure and Applied Mathematics},
edition   = {1},
	address = {San Diego},
}

@book{hirsch_differential_1976,
	address = {New York, NY},
	series = {Graduate {Texts} in {Mathematics}},
	title = {Differential {Topology}},
	volume = {33},
	publisher = {Springer New York},
	author = {Hirsch, Morris W.},
	year = {1976},
	
}

@book{mukherjee_differential_2015,
	address = {Cham Heidelberg},
	edition = {2},
	title = {Differential topology},
	publisher = {Birkhäuser},
	author = {Mukherjee, Amiya},
	year = {2015},
}

@thesis{Larsson_diss,
	Author = {Eric Larsson},
	institution = {KTH Royal Institute of Technology},
	title = {Lorentzian Cobordisms, Compact Horizons and the Generic Condition},
	year = {2014},
	type = {Master of Science Thesis},
	location={Stockholm},
	url = {http://kth.diva-portal.org/smash/record.jsf?pid=diva2%3A723418&dswid=6483}
}

@article{beem_generic_1993,
	doi = {10.1007/BF00759194},
	title = {The generic condition is generic},
	volume = {25},
	number = {9},
	journal = {General Relativity and Gravitation},
	author = {Beem, John K. and Harris, Steven G.},
	month = sep,
	year = {1993},
	pages = {939--962},
}

@book{BeemGLG,
	title = {Global Lorentzian Geometry},
	publisher = {CRC Press},
	address = {New York},
	edition={2},
	pages={656},
	author = {John K. Beem and Paul E. Ehrlich and Kevin L. Easley},
	year = {1999}
}

@book{HE,
	author = {S.W Hawking and G.F.R Ellis},
	title = {The Large Scale Structure of Space-Time},
	year={1973},
	edition={1},
	publisher = {Cambridge University Press},
	address = {Cambridge},
	pages={391},
	series={Cambridge Monographs on Mathematical Physics}
}

@book{WaldGR, 
author={Robert M. Wald},
title={General Relativity},
publisher = {The University of Chicago Press},
year = {1984},
address={Chicago},
edition ={1},
pages={491}
}

@article{PenroseTeo,
	title = {Gravitational Collapse and Space-Time Singularities},
	author = {Penrose, Roger},
	journal = {Phys. Rev. Lett.},
	volume = {14},
	pages = {57--59},
	year = {1965},
	publisher = {American Physical Society},
}

@article{HawkingTeo,
	author = {S. W. Hawking},
	journal = {Proceedings of the Royal Society of London. Series A, Mathematical and Physical Sciences},
	number = {1439},
	pages = {511--521},
	publisher = {The Royal Society},
	title = {The Occurrence of Singularities in Cosmology},
	volume = {294},
	year = {1966}
}

@article{HawkPen,
	author = {S. W. Hawking and R. Penrose},
	journal = {Proceedings of the Royal Society of London. Series A, Mathematical and Physical Sciences},
	number = {1519},
	pages = {529--548},
	publisher = {The Royal Society},
	title = {The Singularities of Gravitational Collapse and Cosmology},
	volume = {314},
	year = {1970},
	doi = {10.1098/rspa.1970.0021}
}

@article{chrusciel_outer_2014,
	doi = {10.1088/0264-9381/31/4/045013},
	title = {Outer trapped surfaces are dense near {MOTSs}},
	volume = {31},
	number = {4},
	journal = {Classical and Quantum Gravity},
	author = {Chruściel, Piotr T and Galloway, Gregory J},
	year = {2014},
	pages = {11},
}

@article{choquet-bruhat_global_1969,
	title = {Global aspects of the {Cauchy} problem in general relativity},
	volume = {14},
	number = {4},
	journal = {Communications in Mathematical Physics},
	author = {Choquet-Bruhat, Yvonne and Geroch, Robert},
	month = dec,
	year = {1969},
	pages = {329--335},
	doi ={10.1007/BF01645389}
}

@book{oxtoby_measure_1980,
	address = {New York ; Heidelberg ; Berlin},
	edition = {2},
	series = {Graduate {Texts} in {Mathematics}},
	title = {Measure and category: a survey of the analogies between topological and measure spaces},
	number = {2},
	publisher = {Springer-Verlag},
	author = {Oxtoby, John C.},
	year = {1980},
	note = {OCLC: 1204344049},
}

@book{bachman2000functional,
	title={Functional Analysis},
	author={Bachman, G. and Narici, L.},
	series={Academic Press textbooks in mathematics},
	year={2000},
	publisher={Dover Publications}
}

@book{golubitsky_stable_1973,
	address = {Berlin Heidelberg New York, N.Y},
	series = {Graduate texts in mathematics},
	title = {Stable mappings and their singularities},
	number = {14},
	publisher = {Springer},
	author = {Golubitsky, Martin and Guillemin, Victor},
	year = {1973},
}

@article{lerner_space_1973,
	title = {The space of {Lorentz} metrics},
	volume = {32},
	number = {1},
	journal = {Communications in Mathematical Physics},
	author = {Lerner, David E.},
	year = {1973},
	pages = {19--38},
	doi = {10.1007/BF01646426}
}

@article{Silva_MOTS_2012,
	year = {2012},
	publisher = {IOP Publishing},
	volume = {29},
	number = {23},
	pages = {15},
	author = {I P Costa e Silva},
	title = {On the geodesic incompleteness of spacetimes containing marginally (outer) trapped surfaces},
	journal = {Classical and Quantum Gravity},
	doi = {10.1088/0264-9381/29/23/235008}
}

@article{Hunt1992PrevalenceAT,
	title={Prevalence: a translation-invariant “almost every” on infinite-dimensional spaces},
	author={Brian R. Hunt and Tim Sauer and James A. Yorke},
	journal={Bulletin of the American Mathematical Society},
	year={1992},
	volume={27},
	pages={217-238},
	doi = {https://doi.org/10.1090/S0273-0979-1992-00328-2}
}

@article{Galloway_senovilla,
	doi = {10.1088/0264-9381/27/15/152002},
	year = {2010},
	publisher = {},
	volume = {27},
	number = {15},
	pages = {152002},
	author = {Gregory J Galloway and José M M Senovilla},
	title = {Singularity theorems based on trapped submanifolds of arbitrary co-dimension},
	journal = {Classical and Quantum Gravity},
}

@article{Mather,
	URL = {http://www.jstor.org/stable/1970668},
	author = {John N. Mather},
	journal = {Annals of Mathematics},
	number = {2},
	pages = {254--291},
	publisher = {Annals of Mathematics},
	title = {Stability of $C^\infty$ Mappings: II. Infinitesimal Stability Implies Stability},
	volume = {89},
	year = {1969}
}

@article{Galloway2018,
	year = {2018},
	month = feb,
	publisher = {Springer Science and Business Media {LLC}},
	volume = {50},
	number = {3},
	author = {Gregory J. Galloway},
	title = {Rigidity of outermost {MOTS}: the initial data version},
	journal = {General Relativity and Gravitation}
}

@mastersthesis{eduardothesis,
	author  = {Hafemann, Eduardo},
	title   = {Geometry and topology of black hole horizons},
	school  = {Universidade Federal de Santa Catarina},
	year    = {2023},
	type = {Masters Thesis},
	url = {https://repositorio.ufsc.br/handle/123456789/251595}
}

@article{AnderssonMOTSStab,
	author = {Lars Andersson and Marc Mars and Walter Simon},
	title = {{Stability of marginally outer trapped surfaces and existence of marginally outer trapped tubes}},
	volume = {12},
	journal = {Advances in Theoretical and Mathematical Physics},
	number = {4},
	publisher = {International Press of Boston},
	pages = {853 -- 888},
	year = {2008},
}

@article{chrusciel2003mapping,
	author = {Piotr T. Chruściel and Erwann Delay},
	title = {On mapping properties of the~general relativistic constraints operator in~weighted function spaces, with~applications},
	series = {M\'emoires de la Soci\'et\'e Math\'ematique de France},
	publisher = {Soci\'et\'e math\'ematique de France},
	number = {94},
	year = {2003},
	doi = {10.24033/msmf.407},
	language = {en},
	url = {http://www.numdam.org/item/MSMF_2003_2_94__1_0/}
}

@article{bartnik2005phase,
	title={Phase space for the Einstein equations},
	author={Bartnik, Robert A},
	journal={Communications in Analysis and Geometry},
	volume={13},
	number={5},
	pages={845--885},
	year={2005},
	publisher={International Press of Boston, Inc.}
}

@article{Cook_2000,
	doi = {10.12942/lrr-2000-5},
	
	
	year = {2000},
	
	publisher = {Springer Science and Business Media},
	
	volume = {3},
	
	number = {1},
	
	author = {Gregory B. Cook},
	
	title = {Initial Data for Numerical Relativity},
	
	journal = {Living Reviews in Relativity}
}

@book{abraham2012manifolds,
	title={Manifolds, tensor analysis, and applications},
	author={Abraham, Ralph and Marsden, Jerrold E and Ratiu, Tudor},
	volume={75},
	year={2012},
	publisher={Springer Science \& Business Media}
}

@incollection{andersson2011jang,
	title={Jang's equation and its applications to marginally trapped surfaces},
	author={Andersson, Lars and Eichmair, Michael and Metzger, Jan},
	booktitle={Complex Analysis and Dynamical Systems IV: General relativity, geometry, and PDE (Contemporary Mathematics, 554)},
	pages={13--46},
	year={2011}
}

@article{piccione1,
	author = {Leonardo Biliotti and Miquel Angel Javaloyes and Paulo Piccione},
	title = {Genericity of nondegenerate critical points and Morse geodesic functionals},
	journal = {Indiana Univ. Math. J.},
	fjournal = {Indiana University Mathematics Journal},
	volume = {58},
	year = {2009},
	issue = {4},
	pages = {1797--1830},
}

@book{christou,
	address = {Z\"{u}rich},
	series = {EMS Monographs in Mathematics},
	title = {The formation of black holes in general relativity},
	publisher = {European Mathematical Society (EMS)},
	author = {D. Christodoulou},
	year = {2009},
}

@article{KleinRod1,
	author = {S. Klainerman and I. Rodnianski},
	title = {On the formation of trapped surfaces},
	journal = {Acta Math.},
	fjournal = {Acta Mathematica},
	volume = {208},
	year = {2012},
	issue = {2},
	pages = {211--213},
}

@article{KleinRod2,
	author = {S. Klainerman and J. Luk and I. Rodnianski},
	title = {A fully anisotropic mechanism for formation of
trapped surfaces},
	journal = {Inv. Math.},
	fjournal = {Inventiones Mathematicae},
	volume = {198},
	year = {2014},
	pages = {1--26},
}

@book{HBBBH,
        address = {Singapore},
	series = {Advanced Series in Astrophysics and Cosmology},
        volume={8},
	title={Hawking on the big bang and black holes},
	author={S.W. Hawking},
	year={1993},
	publisher={World Scientific},
}

@article{bernal2005smoothness,
	title={Smoothness of time functions and the metric splitting of globally hyperbolic spacetimes},
	author={Bernal, Antonio N and S{\'a}nchez, Miguel},
	journal={Communications in mathematical physics},
	volume={257},
	pages={43--50},
	year={2005},
	publisher={Springer},
}
	
	


\end{document}